\documentclass[11pt]{amsart}
\usepackage{amssymb,latexsym,amsmath,graphicx,graphics,epic,eepic,caption,subcaption}

\addtolength{\oddsidemargin}{-2pc}
\addtolength{\evensidemargin}{-2pc}
\addtolength{\textwidth}{4pc}

\theoremstyle{plain}
\newtheorem{theorem}{Theorem}
\numberwithin{theorem}{section}
\newtheorem{lemma}[theorem]{Lemma}
\newtheorem{proposition}[theorem]{Proposition}
\newtheorem{corollary}[theorem]{Corollary}

\theoremstyle{definition}
\newtheorem{definition}[theorem]{Definition}
\newtheorem{example}[theorem]{Example}

\newtheorem{remark}[theorem]{Remark}

\newtheorem{criterion}[theorem]{Criterion}
\newtheorem{principle}[theorem]{Principle}

\newcommand{\C}{{\mathbb C}}
\newcommand{\R}{{\mathbb R}}
\newcommand{\Z}{{\mathbb Z}}
\newcommand{\Q}{{\mathbb Q}}
\renewcommand{\P}{{\mathbb P}}
\newcommand{\s}{{\mathbb S}}

\newcommand{\I}{{\mathcal I}}

             \newcommand{\J}{{\mathcal J}}
              \newcommand{\M}{{\mathcal M}}
              
              \newcommand{\U}{{\mathcal U}}

              \newcommand{\E}{{\mathcal E}}

              \renewcommand{\O}{{\mathcal O}}
              \newcommand{\N}{{\mathcal N}}
              \newcommand{\G}{{\mathcal G}}

               \renewcommand{\S}{{\mathcal S}}
               \newcommand{\p}{{\mathcal P}}


\begin{document}
\title{Symplectic configurations: a homological and computer-aided approach}
\author{Weimin Chen}
\subjclass[2000]{}
\keywords{}
\thanks{}
\date{\today}
\maketitle
\begin{abstract}
Motivated by and extending the technical results in our earlier work \cite{C,C1} on symplectic Calabi-Yau $4$-manifolds, a general and systematic approach for studying certain unions of symplectic embedded surfaces in a rational $4$-manifold $X=\C\P^2\# N\overline{\C\P^2}$ is formulated, which may find applications in a broader 
range of problems. A distinct feature of this method is that it is computer-aided.  We address several fundamental theoretical questions concerning the computational aspect. On the other hand, we also establish a symplectic analog of Cremona transformations from algebraic geometry, which is another fundamental feature and a main technical tool of this method. For an illustration, we give a new proof that a certain line arrangement in $\C\P^2$, called Fano planes, cannot exist in the symplectic category. The nonexistence of Fano planes in the algebraic category follows from a theorem of Hirzebruch \cite{H}, while in the topological category, including the symplectic category, it was first proved by Ruberman and Starkston \cite{RuS}. Our proof for the symplectic category is independent to both, and is by combining the Cremona transformation technique with Gromov's theory of pseudoholomorphic curves. 
\end{abstract}
\tableofcontents

\section{The general scheme and main results}
\subsection{The general problem and an outline of strategy}
Let $X=\C\P^2\# N\overline{\C\P^2}$ be a rational $4$-manifold, let $D=\cup_{k=1}^n F_k$ be a union of smoothly embedded, oriented surfaces in $X$, which obeys the following condition:
\begin{itemize}
\item [{(\dag)}] Any two $F_k,F_l$ in $D$ are either disjoint, or intersect transversely and positively at one point, and no three distinct components of $D$ meet in one point. 
\end{itemize}
We call such $D$ a {\bf symplectic configuration} if there is a symplectic structure $\omega$ on $X$ with respect to which each surface $F_k$ is symplectic. We should point out that we do not assume $D$ is connected 
here as one usually does. Furthermore, for simplicity and without loss of generality, we shall assume that the symplectic structures $\omega$, with respect to which $D$ is symplectic and which are auxiliary and will not be fixed, define the same canonical line bundle up to an isomorphism, which will be denoted by $K_X$. Finally, we shall only be concerned with the embeddings of $D$ up to a smooth isotopy. 

A scheme for analyzing such configurations $D$ naturally emerged in our earlier work \cite{C} where, following the general strategy proposed in \cite{C3}, we reduce the study of symplectic finite group actions on a symplectic Calabi-Yau $4$-manifold to the corresponding question concerning the existence and classification of certain 
symplectic configurations in the corresponding symplectic resolution of the quotient orbifolds, which is a rational $4$-manifold $X=\C\P^2\# N\overline{\C\P^2}$ for the most important case. A technical foundation was laid in \cite{C} for such a study, which was further expanded in \cite{C1}. Building on the technical results in \cite{C,C1} and formalizing the scheme, we shall develop in this paper a systematic approach for studying general symplectic configurations in  a rational $4$-manifold $X=\C\P^2\# N\overline{\C\P^2}$, which aims to achieve the following specific objectives: for a given topological type of a configuration $D$ and a rational $4$-manifold $X=\C\P^2\# N\overline{\C\P^2}$:
\begin{itemize}
\item [{(i)}] To show that no symplectic embeddings of $D$ in $X$ exist. 
\item [{(ii)}] To show that a symplectic embedding of $D$ in $X$ implies the existence of a holomorphic embedding of $D$.
\item [{(iii)}] To construct a symplectic embedding of $D$ in $X$, thus proving existence. 
\item [{(iv)}] To describe the topology of the complement $X\setminus D$, including the classification of symplectic embeddings of $D$ in $X$.
\end{itemize}
These objectives, although being rooted in our project in \cite{C,C1}, are also applicable to a broad range of problems involving symplectic configurations in rational $4$-manifolds. 

\vspace{2mm}

Our method begins by considering the homological expression of the symplectic surfaces $F_k$ in $D$ with respect to a special basis $H,E_1,E_2,\cdots,E_N$ of $H^2(X)$ associated to the symplectic structure $\omega$, called a {\bf reduced basis} of $(X,\omega)$. In \cite{C1} we introduced a symplectic blowing-down procedure associated to each reduced basis, blowing down successively the classes $E_N,E_{N-1},\cdots, E_2, E_1$ to reduce the $4$-manifold $X=\C\P^2\# N\overline{\C\P^2}$ to $\C\P^2$ (for the most general case, to 
$\C\P^2\#\overline{\C\P^2}$), and to transform the symplectic configuration $D$ to a so-called {\bf symplectic arrangement} $\hat{D}$ in $\C\P^2$, which is a union of pseudoholomorphic curves whose singularities and intersection pattern are encoded in the homological expressions of the $F_k$'s with respect to the reduced basis $H,E_1,E_2,\cdots,E_N$. In a nutshell, we reduce the study of $D$ in $X$ to a study of $\hat{D}$ in $\C\P^2$,
which is more amenable to the existing tools, such that Gromov's theory of pseudoholomorphic curves and results or techniques from algebraic geometry. 

With this understood, a key observation we made in \cite{C} is that by specifying the symplectic area of the 
surfaces $F_k$ in $D$, in which the symplectic structure $\omega$ may be altered but the canonical class 
$c_1(K_X)$ remains the same, it is possible to eliminate the ``unwanted" homological expressions of the surfaces $F_k$, as a reduced basis of $(X,\omega)$ is area-constrainted (particularly, in an ideal situation if one can eliminate all possible homological expressions, then no symplectic embeddings of $D$ in $X$ can exist). This allows us to have control over the outcome of the successive symplectic blowing-down procedure applied to $D$, i.e., the symplectic arrangement $\hat{D}$ in $\C\P^2$, as well as the ability to choose between different choices of the successive symplectic blowing-down, which lead to different symplectic arrangements in $\C\P^2$. 

Note that in an analogous situation in the algebraic setting, the complex arrangements in $\C\P^2$ that are resulted from two different successive blowing-downs are related by a Cremona transformation, i.e., a birational automorphism of $\C\P^2$, see \cite{AC}. As the main technical result of this paper, we establish a symplectic analog of Cremona transformations, which allows us to construct from a given symplectic arrangement in $\C\P^2$ another symplectic arrangement which are ``Cremona equivalent" to each other in a certain algebraic sense, after we specify an ``algebraic Cremona equivalence". See Theorem 1.8 or the discussions in Section 3.4 for more details. It turns out that Cremona equivalence between different symplectic arrangements is an important feature of this method and a main technical tool.  In practice, the symplectic Cremona transformations will be used in combination with Gromov's theory of pseudoholomorphic curves as well as results or techniques from algebraic geometry. For an illustration and an application, see  Example 1.9, Theorem 1.11, Corollaries 1.12 and 1.13.

\vspace{2mm}

With the preceding discussions understood, it is clear that the following questions are fundamental issues pertaining to this method:

\begin{itemize}
\item [{(1)}] Under what conditions are there only finitely many possible homological expressions for $D$ (here the symplectic structure is not fixed)? What are the basic structural properties of the set of potential homological expressions?
\item [{(2)}] What is the mechanism to eliminate a homological expression of $D$ by specifying the areas of the surfaces $F_k$ in $D$? What is the set of possible areas for $F_k$ from which we can specify, and what are the basic structural properties?
\end{itemize}
Furthermore, note that our method has a natural, build-in computational component, i.e., under the assumption that there are only finitely many homological expressions of $D$, we will rely on computer programming to generate the set of all homological expressions of $D$, and then use computer programming to select an ``optimal" choice of the areas for the surfaces $F_k$ which we will specify. 

Keeping in mind the computational feature of this method, we formulate in this paper a fairly general and practically applicable condition, and show that under this condition, it suffices to only consider a set of finitely many homological expressions of $D$. Moreover, we formulate a fairly simple and computational-friendly criterion for eliminating a given homological expression of $D$ by a given choice of areas of the surfaces $F_k$. The more
``practical", computational related questions, such as algorithm design, computational efficiency, optimization, etc, 
will be dealt with when we apply the method to some specific (or specific type of) symplectic configurations, as how to handle these issues depends on the symplectic configurations under consideration. 

\vspace{2mm}

With the preceding understood, we shall next give an overview of this method, along the way introducing some fundamental concepts and stating the main results of this paper, which occupies the remaining part of this section.

\subsection{The computational aspect: basic concepts and a finiteness condition}
Let $D=\cup_{k=1}^n F_k$ be a symplectic configuration in $X=\C\P^2\# N\overline{\C\P^2}$, let $\omega$ be a symplectic structure on $X$ with respect to which each $F_k$ is symplectic. A reduced basis $H,E_1,E_2,\cdots,E_N$ of $(X,\omega)$ is a certain basis of $H^2(X)$ which has a standard intersection matrix, such that 
$$
c_1(K_X)=-3H+E_1+E_2+\cdots+ E_N.
$$
See Example 2.1(2) for a precise definition, and see \cite{C}, Section 3, \cite{C1}, Section 4, for further relevant details. Reduced bases always exist (cf. \cite{BP, LW}), and it is known that the symplectic areas of a reduced 
basis $H,E_1,E_2,\cdots,E_N$, i,e., 
$$
\lambda_0:=\omega(H), \lambda_i:=\omega(E_i) \mbox{  for } i=1,2,\cdots,N,
$$ 
determine the symplectic structure $\omega$ up to a symplectomorphism of $X$ (cf. \cite{KK}). Furthermore, for a generic symplectic structure $\omega$, reduced basis of $(X,\omega)$ is unique, see Lemma 2.9.

Let $A_k\in H^2(X)$ be the class of $F_k$. Write each $A_k$ in a reduced basis $H,E_1,E_2,\cdots,E_N$, 
$$
A_k:=a_kH-\sum_{i=1}^N b_{ki}E_i, \;\; a_k, b_{ki}\in\Z.
$$
Then the class $A_k$ determines a vector $\vec{v}_k=(a_k, b_{k1}, b_{k2}, \cdots, b_{kN})$, which is admissible in the sense of Definition 1.1 (cf. \cite{C}, Lemmas 3.3 and 3.4, compare also Lemma 2.3(2) in this paper). The assignment $F_k\mapsto A_k:=a_kH-\sum_{i=1}^N b_{ki}E_i$
is called a {\bf homological expression} of the symplectic configuration $D=\cup_{k=1}^nF_k$ with respect to the reduced basis $H,E_1,E_2,\cdots,E_N$. With this understood, one of the fundamental ideas of our method, the first one, is to study $D$ through its homological expressions. 

\begin{definition}
A vector of integer entries $\vec{v}:=(a, b_{1}, b_{2}, \cdots, b_{N})$ is called {\bf admissible} if the following conditions are satisfied: 
\begin{itemize}
\item [{(1)}] If $a>0$, then $b_i\geq 0$ for each $i=1,2,\cdots,N$.
\item [{(2)}] If $a\leq 0$, then exactly one of the $b_i$'s equals $-(|a|+1)$ and the rest are either $0$ or $1$.
\end{itemize}
\end{definition}

To proceed further, we denote the self-intersection of $F_k$ by $\nu_k$, the genus of $F_k$ by $g_k$, and the intersection number of $F_k,F_l$, where $k\neq l$, by $\nu_{kl}$. Then it is easy to see that for any homological expression $F_k\mapsto A_k:=a_kH-\sum_{i=1}^N b_{ki}E_i$ of $D$, the $n$-tuple of vectors $(\vec{v}_k)$,
where $\vec{v}_k=(a_k, b_{k1}, b_{k2}, \cdots, b_{kN})$, belongs to the set $\Omega(D)$ defined below 
in Definition 1.2.

\begin{definition}
For a given symplectic configuration $D=\cup_{k=1}^nF_k$, with $\nu_k$, $g_k$, $\nu_{kl}$ defined above, we denote by $\Omega(D)$ the set of $n$-tuples of vectors $(\vec{v}_k)$, where each $\vec{v}_k:=(a_k, b_{k1}, b_{k2}, \cdots, b_{kN})$ is admissible,  and the following equations are satisfied:
\begin{itemize}
\item [{(1)}] $a_k^2-\sum_{i=1}^N b_{ki}^2=\nu_k, \;\;\; k=1,2,\cdots,n$.
\item [{(2)}] $-3a_k+\sum_{i=1}^N b_{ki}=2g_k-2-\nu_k, \;\;\;  k=1,2,\cdots,n$.
\item [{(3)}] $a_ka_l-\sum_{i=1}^N b_{ki}b_{li}=\nu_{kl},\;\;\;  k\neq l, \; k,l=1,2,\cdots,n$.
\end{itemize}
Furthermore, for any $n$-tuple $\underline{C}=(C_k)$ of positive constants, we denote by 
$\Omega(D,\underline{C})$ the subset of $\Omega(D)$ which consists of those $(\vec{v}_k)$ such that the first entry $a_k$ in each $\vec{v}_k$ obeys $a_k\leq C_k$. 

Fixing an order of the components of $D$, i.e., $F_1, F_2, \cdots, F_n$,  we associate to each element 
$(\vec{v}_k)\in \Omega(D)$, where $\vec{v}_k=(a_k, b_{k1},b_{k2},\cdots, b_{kN})$, the following 
$n\times (N+1)$-matrix $\I_{(\vec{v}_k)}$, defined by 
$$
\I_{(\vec{v}_k)}=\left (\begin{array}{ccccc}
a_1 & -b_{11} & -b_{12} & -b_{13} & \cdots   -b_{1N} \\
a_2 & -b_{21} & -b_{22} & -b_{23} & \cdots  -b_{2N} \\
\cdots \\
a_k & -b_{k1} & -b_{k2} & -b_{k3} & \cdots  -b_{kN} \\
\cdots \\
a_n & -b_{n1} & -b_{n2} & -b_{n3} & \cdots -b_{nN} \\
\end{array}
\right ).
$$
We call $\I_{(\vec{v}_k)}$ the {\bf associated matrix} of $(\vec{v}_k)\in \Omega(D)$.
\end{definition}

The set $\Omega(D)$ admits some natural groups of symmetries, which come in three types:

\vspace{1mm}

{\bf Permutations of indices $1,2,\cdots,N$:} Let $\sigma\in S_N$, a permutation of indices $1,2,\cdots,N$. For any $(\vec{v}_k)\in\Omega(D)$ (resp. $\Omega(D,\underline{C})$), let $\vec{v}_k^\prime$ be the vector obtained 
from $\vec{v}_k$ by changing the $b_{ki}$-entries in $\vec{v}_k$ according to 
$\sigma$, then $(\vec{v}_k^\prime)\in \Omega(D)$ (resp. $\Omega(D,\underline{C})$). Note that in terms of the
associated matrix $\I_{(\vec{v}_k)}$, this amounts to a permutation of the last $N$ columns of $\I_{(\vec{v}_k)}$
according to $\sigma\in S_N$.

\vspace{1mm}

{\bf Automorphisms of $D$:} Let $\tau\in S_n$, a permutation of indices $1,2,\cdots,n$. Suppose $\tau$ induces an automorphism of $D$, i.e., the data $\{\nu_k, g_k, \nu_{kl}\}$ are preserved under $\tau$. Then for any $(\vec{v}_k)\in\Omega(D)$ (resp. $\Omega(D,\underline{C})$), $(\vec{v}_k^\prime:=\vec{v}_{\tau(k)})\in \Omega(D)$  
(resp. $\Omega(D,\tau(\underline{C}))$) as well. Here $\tau(\underline{C})=(C_{\tau(k)})$ for $\underline{C}=(C_k)$.
In terms of the associated matrix $\I_{(\vec{v}_k)}$, this amounts to a permutation of the rows of $\I_{(\vec{v}_k)}$
according to $\tau\in S_n$.

\vspace{1mm}

{\bf Automorphisms of $H^2(X)$:} The relevant automorphisms of $H^2(X)$ are those which preserve the intersection form on $H^2(X)$ and the canonical class $c_1(K_X)$. We are particularly interested in the automorphisms of $H^2(X)$ which are induced by an orientation-preserving diffeomorphism of $X$. According to \cite{LL} (cf. Theorem 3.1 in \cite{LL}), fixing any standard basis $H,E_1,E_2,\cdots,E_N$ (i.e., $H,E_1,E_2,\cdots,E_N$ has standard intersection matrix, and $c_1(K_X)=-3H+E_1+E_2+\cdots+E_N$, see Section 2), such an automorphism must be a product of reflections along $(-2)$-classes of the form 
$\gamma=E_i-E_j$ or $\gamma=H-E_i-E_j-E_k$. 
Recall that the reflection $R(\gamma)$ along a $(-2)$-class $\gamma$ is defined as follows:
$$
R(\gamma)(A)=A+(\gamma\cdot A)\gamma, \;\; \forall A\in H^2(X). 
$$
In particular, $R(E_i-E_j)$ is simply switching the classes $E_i,E_j$. 

With the preceding understood, the action of $R(\gamma)$ on the set $\Omega(D)$ is defined as follows:
given any $(\vec{v}_k)\in \Omega(D)$, we identify $\vec{v}_k$ with the class
$A_k:=a_kH-\sum_{i=1}^N b_{ki}E_i$ and set $A_k^\prime:=R(\gamma)(A_k)$. Let 
$\vec{v}_k^\prime:=(a_k^\prime, b_{k1}^\prime, b_{k2}^\prime, \cdots, b_{kN}^\prime)$ be the vector 
formed from the coefficients of $A_k^\prime$. Then we define $R(\gamma)(\vec{v}_k)=(\vec{v}_k^\prime)$. 
With this understood, it is easy to see that, if $\gamma=E_i-E_j$,  $R(\gamma)(\vec{v}_k)\in \Omega(D)$ for any 
$(\vec{v}_k)\in \Omega(D)$, as $R(\gamma)$ simply switches the indices $i,j$. 

On the other hand, note that $R(\gamma)$, for $\gamma=H-E_i-E_j-E_k$, may not preserve the admissibility of 
the vectors $\vec{v}_k$. However, when it does preserve the admissibility of each $\vec{v}_k$, then 
$R(\gamma)(\vec{v}_k)\in \Omega(D)$, as the equations (1)-(3) in Definition 1.2 are always satisfied by
$R(\gamma)(\vec{v}_k)$. Finally, note that even if $R(\gamma)(\vec{v}_k)\in \Omega(D)$, 
$(\vec{v}_k)\in \Omega(D,\underline{C})$ does not imply that $R(\gamma)(\vec{v}_k)\in \Omega(D,\underline{C})$, 
because the constraints $a_k\leq C_k$ may not be preserved under $R(\gamma)$. Finally, we should point out 
that the reflections $R(\gamma)$, where $\gamma=H-E_i-E_j-E_k$, are closely related to the so-called quadratic Cremona transformations in algebraic geometry  (cf. \cite{AC}). So in some sense, the correspondence 
$(\vec{v}_k)\mapsto R(\gamma)(\vec{v}_k)$ defines an ``algebraic Cremona equivalence", which, under some further assumptions, can be enhanced to a symplectic Cremona transformation, see Theorem 1.8. 

\vspace{1mm}

From an enumerative point of view, we shall work with the set of orbits of $\Omega(D)$ under the actions of permutations of the indices $1,2,\cdots,N$. We denote the set of orbits of $\Omega(D)$ by $\hat{\Omega}(D)$, 
and the corresponding orbit set of $\Omega(D,\underline{C})$ by $\hat{\Omega}(D,\underline{C})$. Note that 
when we turn an element $(\vec{v}_k)\in \Omega(D)$ into the corresponding homological expression of $D$, i.e., $F_k\mapsto A_k:=a_kH-\sum_{i=1}^N b_{ki}E_i$ where $\vec{v}_k=(a_k, b_{k1},b_{k2},\cdots, b_{kN})$, the homological expression depends only on the orbit of $(\vec{v}_k)$ in $\hat{\Omega}(D)$ as the classes $E_i$ are naturally ordered for a reduced basis. We shall call an element of $\hat{\Omega}(D)$ (or for simplicity a representative $(\vec{v}_k)$ of it) a {\bf homological assignment} of $D$. From a computational point of view, 
we obtain the set of all possible homological expressions of $D$ through the set of homological assignments. 

\vspace{1mm}

As we shall study $D$ through its homological expressions, the first fundamental question is whether the set
$\Omega(D)$ is always finite. It turns out that in general, $\Omega(D)$ is not finite. However, for any 
$\underline{C}$, the set $\Omega(D,\underline{C})$ is always finite (cf. Lemma 2.4), and moreover, when 
$X=\C\P^2\# N\overline{\C\P^2}$ for some $N\leq 8$, $\Omega(D)= \Omega(D,\underline{C})$ for some 
$\underline{C}$ which depends only on $D$; in particular, $\Omega(D)$ is finite if $N\leq 8$ 
(cf. Proposition 2.5). We state the theorem below for the case where $D$ consists of a single surface, which may be of independent interest.

\begin{theorem}
Let $X=\C\P^2\# N\overline{\C\P^2}$ for some $N\leq 8$, and let $\omega$ be a symplectic structure on $X$. 
Fixing any integer $\alpha$ and any non-negative integer $g$, the number of classes $A\in H^2(X)$ which can be realized by an embedded symplectic surface in $(X,\omega)$, with genus $g$ and self-intersection $-\alpha$, 
is finite, bounded from above by a constant depending only on $\alpha$ and $g$. Moreover, if $N=9$ but
$\alpha+2g-2>0$, then the conclusion continues to hold. 
\end{theorem}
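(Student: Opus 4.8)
The plan is to recast the problem as a purely arithmetic counting question in a fixed reduced basis and then bound the relevant integer vectors. Fix a reduced basis $H,E_1,\ldots,E_N$ of $(X,\omega)$ and write any candidate class as $A=aH-\sum_{i=1}^N b_iE_i$. A class realizable by an embedded symplectic surface of genus $g$ and self-intersection $-\alpha$ has, by the cited admissibility results (\cite{C}, Lemmas 3.3--3.4, and Lemma 2.3(2) here), an admissible homological expression in the sense of Definition 1.1, and it satisfies the two scalar equations coming from the self-intersection and the adjunction formula, i.e. equations (1) and (2) of Definition 1.2 with $\nu=-\alpha$:
\[
a^2-\sum_{i=1}^N b_i^2=-\alpha, \qquad -3a+\sum_{i=1}^N b_i=2g-2+\alpha.
\]
Since distinct admissible vectors give distinct classes in the fixed basis, it suffices to bound the number of admissible $(a,b_1,\ldots,b_N)$ satisfying these two equations by a constant depending only on $\alpha$ and $g$.

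First I would dispose of the case $a\leq 0$. Here condition (2) of Definition 1.1 forces exactly one $b_i$ to equal $-(|a|+1)$ and all others to lie in $\{0,1\}$. Substituting this into the self-intersection equation collapses it to $2|a|+1+m=\alpha$, where $m\in\{0,1,\ldots,N-1\}$ counts the entries equal to $1$; in particular $|a|\leq (\alpha-1)/2$ and every entry is bounded. As there are at most $N$ choices for the distinguished index and $2^{N-1}$ choices for the remaining entries, and $N\leq 8$, this case contributes at most a constant number of classes.

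The main case, and the heart of the argument, is $a>0$, where admissibility forces $b_i\geq 0$ for all $i$. Rewriting the equations as $\sum b_i^2=a^2+\alpha$ and $\sum b_i=3a+(2g-2+\alpha)$ and applying the Cauchy--Schwarz inequality $\left(\sum b_i\right)^2\leq N\sum b_i^2$ yields
\[
\left(3a+2g-2+\alpha\right)^2\leq N(a^2+\alpha)\leq 8(a^2+\alpha).
\]
The crucial point --- and the reason for the hypothesis $N\leq 8$ --- is that the coefficient of $a^2$ on the left is $9$, strictly larger than $N$, so after rearrangement the inequality reads $(9-N)a^2+(\text{lower order in }a)\leq 0$ with $9-N\geq 1>0$. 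This is a quadratic inequality in $a$ with positive leading coefficient, so it confines $a$ to a bounded interval, and solving for the larger root bounds $a$ above by an explicit constant depending only on $\alpha$ and $g$. Once $a$ is bounded, $\sum b_i^2=a^2+\alpha$ is bounded, hence each nonnegative $b_i$ is bounded, and since there are only $N\leq 8$ of them the number of admissible vectors in this case is again bounded by a constant depending only on $\alpha$ and $g$. Summing the two cases gives the theorem.

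I expect the only genuine obstacle to be the case $a>0$, and specifically the recognition that Cauchy--Schwarz together with the numerical gap $9>N$ (valid precisely when $N\leq 8$) is exactly what traps $a$; for $N=9$ the quadratic degenerates and the count can genuinely become infinite, so the hypothesis cannot be relaxed. The remaining work --- that realizability implies admissibility together with the two equations (already supplied by the cited lemmas), and the elementary counting once all entries are bounded --- is routine.
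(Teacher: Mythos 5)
Your proof is correct, and its engine is the same as the paper's: the Cauchy--Schwarz inequality combined with the numerical gap $9>N$, which is exactly why $N\leq 8$ is needed (the paper even records the same counterexample for $N=9$, the classes $A_t=(3t+1)H-(t+1)(E_1+E_2+E_3)-t(E_4+\cdots+E_9)$). The difference is in execution. The paper (Lemma 2.7) applies Cauchy--Schwarz only to the leading term, estimating $\frac{3a}{M}\leq\bigl(\frac{1}{M}\sum b_i^2\bigr)^{1/2}$ where $M$ is the number of nonzero $b_i$'s; since this requires $\sum_i b_i\geq 3a$, i.e.\ $\alpha+2g-2\geq 0$, the paper must split into cases according to the sign of $\alpha+2g-2$ and, in the negative case, run an auxiliary argument with a parameter $\epsilon$ chosen separately for $N=8$, $N=7$, $N\leq 6$ (Lemma 2.7(2)--(3)); it also invokes a reflection-based lemma (Lemma 2.6, extending Lemma 3.5 of \cite{C}) to sharpen the bounds to $a\leq 3$ in most cases. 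You instead keep the constant term inside the square, obtaining $(3a+2g-2+\alpha)^2\leq N(a^2+\alpha)$ unconditionally, which rearranges to $(9-N)a^2+6(2g-2+\alpha)a+\bigl((2g-2+\alpha)^2-N\alpha\bigr)\leq 0$, a single upward-opening quadratic in $a$ that traps $a$ in a bounded interval with no case analysis. Your treatment of $a\leq 0$ via Definition 1.1(2) matches the paper's (Lemma 2.4). What you lose relative to the paper is sharpness: your root bound is cruder than the explicit constants of Corollary 2.8 (e.g.\ $a\leq 3$ when $\alpha+2g-2\geq 0$ and $N\leq 8$), which the paper cares about because these bounds feed into the computer searches later in the program. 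For the finiteness statement of Theorem 1.3 alone, your streamlined version suffices.
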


Before dealing with the issue of finiteness of the homological assignments of $D$ for the case where $N\geq 9$, we shall first discuss the second fundamental idea of the method, i.e., the freedom of specifying the symplectic areas of the components $F_k$ of the symplectic configuration.  To this end, we need to impose the following additional condition on $D$: 
\begin{itemize}
\item [{(\ddag)}] Introduce the $n\times n$ matrix $Q:=(\nu_{kl})$, $\nu_{kl}:=F_k\cdot F_l$,
where an order for the components $F_k$ is being fixed. Then either $Q$ is negative definite, or $D$ is connected and $Q$ is non-singular and non-negative definite. 
\end{itemize}

Under $(\ddag)$, we shall define a cone $C_\delta$ in $\R^n$ as follows. First, we shall adapt the following notation: for any vector $\vec{x}=(x_1,x_2,\cdots,x_n)^T$, we will write $\vec{x}\geq 0$ (resp. $\vec{x}>0$) if $x_k\geq 0$ 
(resp. $x_k>0$) for any $k=1,2,\cdots,n$. With this understood, we have the following definition for $C_\delta$:

\begin{itemize}
\item if $Q$ is negative definite, then $C_\delta=\{\vec{\delta}\in\R^n|\vec{\delta}\geq 0\}$, 
\item if $D$ is connected and $Q$ is non-singular and non-negative definite, then
$$
C_\delta=\{\vec{\delta}\in\R^n|\vec{\delta}\geq 0 \mbox{ and } Q^{-1}\vec{\delta}\geq 0\}.
$$
\end{itemize}
We remark that the cone $C_\delta$ is invariant under the automorphisms of $D$, as $Q=(\nu_{kl})$ is invariant 
under an automorphism of $D$.

\begin{definition}
Let $\vec{\delta}=(\delta_k)$ be any interior point in the cone $C_\delta$. We denote by $Z(\vec{\delta})$ the set of symplectic structures $\omega$ on $X$ which have the following properties: 
\begin{itemize}
\item $c_1(K_X)$ is the canonical class of $\omega$.
\item $D$ is symplectic with respect to $\omega$.
\item $\omega(F_k)=\delta_k$ for $k=1,2,\cdots,n$. 
\end{itemize}
\end{definition}

With Definition 1.4 understood, what we mean by the freedom of specifying the symplectic areas of the components 
$F_k$ is that $Z(\vec{\delta})\neq \emptyset$ for any interior point $\vec{\delta}$ of the cone $C_\delta$, if 
$D$ indeed is a symplectic configuration in $X$ (see Lemma 2.10 and \cite{C}, Lemma 4.1). In other words, for any interior point $\vec{\delta}$ in $C_\delta$, there is a symplectic structure $\omega$ such that $D$ is symplectic with respect to $\omega$ and the $\omega$-areas of the components $F_k$ are given by the entries of $\vec{\delta}$,
with $c_1(K_X)$ being the canonical class. 

\vspace{2mm}

With the preceding understood, let $H,E_1,E_2,\cdots,E_N$ be any reduced basis of $(X,\omega)$, where 
$\omega\in Z(\vec{\delta})$, and let $F_k\mapsto A_k:=a_kH-\sum_{i=1}^N b_{ki}E_i$ be the corresponding homological expression of $D$. Set $\vec{v}_k=(a_k, b_{k1}, b_{k2}, \cdots, b_{kN})$ for $k=1,2,\cdots,n$. 
Then as we pointed out earlier, 
$(\vec{v}_k)\in \Omega(D)$. We shall say that the element $(\vec{v}_k)\in \Omega(D)$ is {\bf realized under 
$\vec{\delta}$}. For any $(\vec{v}_k)\in \Omega(D)$, if $(\vec{v}_k)$ is not realized under $\vec{\delta}$, we shall say that $(\vec{v}_k)$ {\bf can be eliminated by $\vec{\delta}$}. It is clear that if there exists an interior point 
$\vec{\delta}$ of the cone $C_\delta$, such that every element $(\vec{v}_k)\in \Omega(D)$ can be eliminated 
by $\vec{\delta}$, then we have shown that the symplectic configuration $D$ cannot exist in $X$.

It turns out that there is a very simple criterion for determining whether a given element $(\vec{v}_k)\in \Omega(D)$ 
can be eliminated by a given $\vec{\delta}$ or not. To explain this, we first recall the constraints on the symplectic areas of a reduced basis $H,E_1,E_2,\cdots,E_N$ of $(X,\omega)$. Let $\lambda_0:=\omega(H)$, 
$\lambda_i:=\omega(E_i)$, where $i=1,2,\cdots,N$, denote the areas of the elements of the reduced basis
$H,E_1,E_2,\cdots,E_N$. Then $(\lambda_0,\lambda_1,\lambda_2, \cdots,\lambda_N)$ satisfies the following conditions (i)-(iii) (cf. \cite{C}): 
\begin{itemize}
\item [{(i)}] $\lambda_i\geq \lambda_j>0$ for any $0<i<j$, where $i,j=1,2,\cdots,N$.
\item [{(ii)}] $\lambda_0\geq \lambda_i+\lambda_j+\lambda_k>0$
for any distinct $i,j,k$, where $i,j,k=1,2,\cdots,N$. 
\item [{(iii)}] $\lambda_0^2-\sum_{i=1}^N \lambda_i^2>0$. 
\end{itemize}

With this understood, we shall consider another cone $C_{\lambda}$, which is a cone in $\R^{N+1}$ defined as follows: let $\vec{\lambda}=(\lambda_0,\lambda_1,\lambda_2,\cdots,\lambda_N)^T\in \R^{N+1}$, then
$$
C_{\lambda}:=\{\vec{\lambda}\in \R^{N+1}| \vec{\lambda}\geq 0, \lambda_0-\lambda_i-\lambda_j-\lambda_k\geq 0,  \mbox{ where $i,j,k$ are distinct}\}.
$$
Now suppose $(\vec{v}_k)\in\Omega(D)$ is realized under $\vec{\delta}$, and let $\omega\in Z(\vec{\delta})$ be
the corresponding symplectic structure and $H,E_1,E_2,\cdots,E_N$ be the reduced basis such that $(\vec{v}_k)$
gives the corresponding homological expression of $D$ with respect to $H,E_1,E_2,\cdots,E_N$. 
Let $\lambda_0:=\omega(H)$, $\lambda_i:=\omega(E_i)$ for $i=1,2,\cdots,N$, and let $\I_{(\vec{v}_k)}$ be the associated matrix of $(\vec{v}_k)$. Setting $\vec{\lambda}:=(\lambda_0,\lambda_1,\cdots,\lambda_N)^T\in \R^{N+1}$, it follows easily that 
$$
\I_{(\vec{v}_k)}\vec{\lambda}=\vec{\delta}, \mbox{ where } 
\vec{\lambda}\in C_\lambda, \vec{\lambda}>0, \mbox{ and } \lambda_0^2-\sum_{i=1}^N \lambda_i^2>0.
$$
We remark that there is another constraint on the vector $\vec{\lambda}$, i.e., the condition (i): 
$\lambda_i\geq \lambda_j$ for $0<i<j$. This constraint, on the one hand, is not as convenient 
because it is not invariant under the permutations of the indices $1,2, \cdots,N$. On the other hand,
for any $\vec{\lambda}\in C_\lambda$, condition (i) is always satisfied up to a permutation of 
$1,2, \cdots,N$. This is the reason why we do not impose it in the definition of $C_\lambda$. 

With the preceding understood, we now state the criterion which determines whether a given element 
$(\vec{v}_k)\in \Omega(D)$ can be eliminated by a given $\vec{\delta}\in C_\delta$. 
Given any $(\vec{v}_k)\in\Omega(D)$, let $\I_{(\vec{v}_k)}$ be the associated matrix. We would like to find a set of area vectors $\vec{\delta}\in C_\delta$ which can be used to eliminate $(\vec{v}_k)$. Obviously, if we set 
$$\Delta((\vec{v}_k)):=\text{ closure }(C_\delta\setminus \I_{(\vec{v}_k)}(C_\lambda)),$$ 
then for any interior point $\vec{\delta}\in \Delta((\vec{v}_k))$,
$(\vec{v}_k)$ and any element of $\Omega(D)$ which equals $(\vec{v}_k)$ up to a permutation of indices 
$1,2, \cdots,N$ can be eliminated by $\vec{\delta}$. Furthermore, let $Aut(D)\subseteq S_n$ be the subgroup of 
permutations of the indices $1,2,\cdots,n$ which preserves the configuration $D$. Note that 
$Aut(D)$ acts on the cone $C_\delta$ by permuting the components of the vectors $\vec{\delta}\in C_\delta$. 
With this understood, if $\vec{\delta}\in \bigcap_{\tau\in Aut(D)} \tau(\Delta((\vec{v}_k)))$ is an interior point, then
$(\vec{v}_k)$ and any element of $\Omega(D)$ which equals $(\vec{v}_k)$ up to a permutation of indices 
$1,2, \cdots,N$ or by the action of an element of $Aut(D)$ can be eliminated by $\vec{\delta}$. 

We remark that if $\vec{\delta}\in \Delta((\vec{v}_k))$ (or $\bigcap_{\tau\in Aut(D)} \tau(\Delta((\vec{v}_k)))$) 
is not an interior point, but only an interior point of $C_\delta$, choosing $\vec{\delta}$ to be the areas of the surfaces $F_k$ may still kill $(\vec{v}_k)\in\Omega(D)$ and the elements of $\Omega(D)$ equivalent to it.
The point is that for such a $\vec{\delta}$, even though the vectors 
$\vec{\lambda}\in \I_{(\vec{v}_k)}^{-1}(\vec{\delta})$ may lie in the cone $C_\lambda$ (i.e., on a face of $C_\lambda$), the inequality $\lambda_0^2-\sum_{i=1}^N \lambda_i^2>0$ may fail so that $\vec{\lambda}$ 
still cannot be the area vector from a reduced basis. On the other hand, we should point out that from a computational point of view, for a given $(\vec{v}_k)\in\Omega(D)$, describing the set of $\Delta((\vec{v}_k))$ or 
$\bigcap_{\tau\in Aut(D)} \tau(\Delta((\vec{v}_k)))$ could be a challenging problem combinatorially. So in
practice, we shall rely on computer programming (see \cite{CGM}) to select an ``optimal" choice of 
$\vec{\delta}\in C_\delta$ (e.g. guided by Principle 1.7). We summarize it in the following 

\begin{criterion}
For any $(\vec{v}_k)\in \Omega(D)$, if $\vec{\delta}\in \Delta((\vec{v}_k))$ is an interior point, then $(\vec{v}_k)$ and any element of $\Omega(D)$ which equals $(\vec{v}_k)$ up to a permutation of indices $1,2, \cdots,N$ can be eliminated by $\vec{\delta}$. Furthermore, if $\vec{\delta}\in \bigcap_{\tau\in Aut(D)} \tau(\Delta((\vec{v}_k)))$ is an interior point, then $(\vec{v}_k)$ and any element of $\Omega(D)$ which equals $(\vec{v}_k)$ up to a permutation of indices $1,2, \cdots,N$ or by the action of an element of $Aut(D)$ can be eliminated by $\vec{\delta}$. 
\end{criterion}

Now we return to the issue of finiteness of homological assignments. For the case of $N\geq 9$, we shall impose the following additional assumption on the configuration $D$: 
\begin{itemize}
\item [{(*)}] $c_1(K_X)$ is supported by $D$, i.e., there exist $c_1,c_2,\cdots,c_n\in\Q$, 
such that $c_1(K_X)=\sum_{k=1}^n c_k F_k$. Moreover, for any $k$, if $c_k\geq 0$, then $F_k$ is a
$(-\alpha)$-sphere for some $\alpha=0,1,2$ or $3$.
\end{itemize}

For convenience we introduce the following subsets of indices of $k$:
$$
I_0=\{k|c_k\geq 0 \mbox{ in condition (*)}\}, \;\; I_1=\{k|\mbox{$F_k$ is a
$(-\alpha)$-sphere for $\alpha=0,1,2$ or $3$}\}.
$$
Observe that $I_0\subseteq I_1$. With this understood, fixing any subset $I^\ast$ of indices such that
$I^\ast\subseteq I_1$, we introduce the following two cones $C^\ast_0, C^\ast_1$ in $\R^n$:
$$
C^\ast_0:=\{\vec{\delta}\in \R^n| \delta_k \leq -\sum_{l=1}^n c_l\delta_l, \; \forall k\in I_0\} 
\mbox{ and }
C^\ast_1:=\{\vec{\delta}\in \R^n| 2 \delta_k \leq -\sum_{l=1}^n c_l\delta_l, \; \forall k\in I^\ast\}.
$$
(We remark that for all the symplectic configurations we encountered in the study of symplectic Calabi-Yau 
$4$-manifolds in \cite{C,C1}, the condition (*) is satisfied, with the interior of the cone 
$C^\ast_0 \cap C_\delta$ nonempty.)

We have the following theorem, which is proved in Section 2.

\begin{theorem}
Under the additional assumptions (\ddag) and (*), there exists a $\underline{C}=(C_k)$, where the constants 
$C_k$ can be explicitly determined from $D$ and the coefficients $c_1,c_2,\cdots,c_n$ in the assumption (*), such that for any interior point $\vec{\delta}\in C^\ast_0 \cap C_\delta$, if an element 
$(\vec{v}_k)\in \Omega(D)$ is realized under $\vec{\delta}$, then $(\vec{v}_k)\in \Omega(D,\underline{C})$.
Moreover, if we fix a subset $I^\ast$ of indices such that $I^\ast\subseteq I_1$, and choose an interior point 
$\vec{\delta}\in C^\ast_1 \cap C^\ast_0\cap C_\delta$, then the constant $C_k$ in $\underline{C}=(C_k)$ can be taken to be $3$ for any $k\in I^\ast$.
\end{theorem}

In other words, by Theorem 1.6, under the assumptions (\ddag) and (*) and assuming the interior of the cone 
$C^\ast_0 \cap C_\delta$ is nonempty, if we choose an interior point $\vec{\delta}\in C^\ast_0 \cap C_\delta$ for the areas of the components $F_k$ of $D$, any element of $\Omega(D)$ in the complement of the finite set 
$\Omega(D,\underline{C})$ can be eliminated. 

Consequently, under the assumptions (\ddag) and (*), it suffices to consider only the elements of the finite set 
$\Omega(D,\underline{C})$ as long as we choose an interior point $\vec{\delta}\in C^\ast_0 \cap C_\delta$ for the areas of the $F_k$'s. Since $\Omega(D,\underline{C})$ is finite, it is possible to give an enumeration of the elements of the corresponding set $\hat{\Omega}(D,\underline{C})$ of homological assignments of $D$ via a computer search (see \cite{CGM}). With this understood, it is desirable to choose an interior point 
$\vec{\delta}\in C^\ast_0 \cap C_\delta$ according to the following principle, where we denote by 
$\hat{\Omega}(D,\underline{C},\vec{\delta})$ the subset of $\hat{\Omega}(D,\underline{C})$ consisting of the elements which cannot be eliminated by  $\vec{\delta}$.
(Our experience in \cite{C} shows that $\hat{\Omega}(D,\underline{C},\vec{\delta})$ can be quite sensitive to
the choice of $\vec{\delta}$.)

\begin{principle}
Assume the interior of $C^\ast_0 \cap C_\delta$ is nonempty. Choose an interior point 
$\vec{\delta}\in C^\ast_0 \cap C_\delta$ such that either $\hat{\Omega}(D,\underline{C},\vec{\delta})=\emptyset$, 
or the following are true:
\begin{itemize}
\item [{(i)}] $\hat{\Omega}(D,\underline{C},\vec{\delta})$ has a very small number of elements.
\item [{(ii)}] For each $(\vec{v}_k)\in\Omega(D,\underline{C},\vec{\delta})$, the entry $a_k$ in $\vec{v}_k$ for each
$k=1,2,\cdots,n$ is non-negative and takes very small values relative to the genus $g_k$ of $F_k$, e.g. $0\leq a_k\leq 3$ if $g_k=0$.
\item [{(iii)}] For each $(\vec{v}_k)\in\hat{\Omega}(D,\underline{C},\vec{\delta})$, with respect to the homological expression of $D$ corresponding to $(\vec{v}_k)$, the successive symplectic blowing-down procedure introduced in \cite{C1} can be carried through to the final stage of $\C\P^2$. As a consequence, for each $(\vec{v}_k)\in\hat{\Omega}(D,\underline{C},\vec{\delta})$, the configuration $D$ is transformed under the successive 
symplectic blowing-down to a symplectic arrangement $\hat{D}$ in $\C\P^2$ whose combinatorial type is 
completely determined by $(\vec{v}_k)$. 
\end{itemize}
\end{principle}

It is clear that if there is a $\vec{\delta}$ such that $\hat{\Omega}(D,\underline{C},\vec{\delta})=\emptyset$, then
the symplectic configuration $D$ cannot exist in $X$. In general, when 
$\hat{\Omega}(D,\underline{C},\vec{\delta})\neq \emptyset$ for any interior point 
$\vec{\delta}\in C^\ast_0 \cap C_\delta$,
if we can find a $\vec{\delta}$ according to Principle 1.7, then the study of $D$ is reduced to the problem of understanding the symplectic arrangements in $\C\P^2$ which correspond to the elements in
$\hat{\Omega}(D,\underline{C},\vec{\delta})$. For example, if we can show that the symplectic arrangement 
$\hat{D}$ in $\C\P^2$ which corresponds to $(\vec{v}_k)\in \hat{\Omega}(D,\underline{C},\vec{\delta})$ 
cannot exist, then the element $(\vec{v}_k)$ is further eliminated. On the other hand, if for some element 
$(\vec{v}_k)\in\hat{\Omega}(D,\underline{C},\vec{\delta})$, the corresponding symplectic arrangement $\hat{D}$ 
in $\C\P^2$ can be realized, then by a successive symplectic blowing-up operation in the reversed order 
(cf. Lemma 3.1), one obtains a symplectic embedding of $D$ in $X$, proving the existence. 

With this understood, at this last stage of the method, a central problem is to try to prove that the symplectic arrangements in $\C\P^2$, which correspond to the elements of $\hat{\Omega}(D,\underline{C},\vec{\delta})$ and cannot be eliminated by other means, can be deformed to a complex arrangement in $\C\P^2$
with the same combinatorial type.  A positive solution would have the following implications: If the complex
arrangement is known to not exist (e.g., by results from algebraic geometry), the corresponding symplectic arrangement also cannot exist, therefore the corresponding element in $\hat{\Omega}(D,\underline{C},\vec{\delta})$ can be eliminated. On the other hand, if every symplectic arrangement under consideration can be deformed to a complex arrangement in $\C\P^2$, and some of the complex arrangements do exist, then the symplectic embedding of $D$ in $X$, which exists, is smoothly equivalent to a holomorphic embedding. We remark that for deformation of a symplectic arrangement in $\C\P^2$ to a complex arrangement, one relies on Gromov's theory of pseudoholomorphic curves. See Section 4 for more details. 

\vspace{2mm}

\subsection{Cremona transformations in a symplectic setting and an application} 
After addressing these theoretical issues, we now discuss the main technical theorem of this paper, Theorem 1.8, where a symplectic analog of Cremona transformations from algebraic geometry (cf. \cite{AC}) is 
established. Moreover, for an illustration of how this technique is used in combination with Gromov's theory
of pseudoholomorphic curves and results from algebraic geometry, we give a new proof that a certain symplectic line arrangement, called Fano planes (cf. \cite{RuS}), cannot exists in $\C\P^2$. 

Recall that for any $(-2)$-class $\gamma=H-E_r-E_s-E_t$, where $H,E_1,E_2,\cdots, E_N$ is a standard basis of $H^2(X)$, the reflection $R(\gamma)$ acts on the set $\Omega(D)$ as long as admissibility is preserved, i.e., for any $(\vec{v}_k)\in \Omega(D)$, $(\vec{v}_k^\prime):=R(\gamma)(\vec{v}_k)\in \Omega(D)$ if and only if each $\vec{v}_k^\prime$ is admissible. The reflections $R(\gamma)$, where $\gamma=H-E_r-E_s-E_t$, are closely related to the so-called quadratic Cremona transformations. So along the way, we will also obtain certain conditions under which the reflection $R(\gamma)$ preserves the admissibility of an element $(\vec{v}_k)\in \Omega(D)$ (see Lemma 3.8).

The construction of a symplectic analog of quadratic Cremona transformations requires an extension of the notion of homological expression of $D$ to a virtual setting. To be more precise, recall that in a homological expression
$F_k\mapsto A_k:=a_k H-\sum_{i=1}^N b_{ki} E_i$, the basis $H,E_1,E_2,\cdots,E_n$ is required to be a reduced basis. This condition allows us to successively blow down the classes $E_N,E_{N-1}, \cdots, E_1$, as they can be successively represented by a symplectic $(-1)$-sphere at each stage. Furthermore, in order to ensure the successive blowing-down operation is reversible, certain assumptions which are labelled as (a) and (b) (see Section 3 for more details) are imposed on the homological expression $F_k\mapsto A_k:=a_k H-\sum_{i=1}^N b_{ki} E_i$. Under the successive blowing-down procedure, the configuration $D$ is transformed to a symplectic arrangement 
$\hat{D}$ in $\C\P^2$, which is a union of pseudoholomorphic curves whose singularities and intersection pattern are completely determined by the element $(\vec{v}_k)\in \Omega(D)$, 
where $\vec{v}_k:=(a_k,b_{k1},b_{k2},\cdots,b_{kN})$. 
The type of the singularities and the intersection pattern of the components of $\hat{D}$ together form part of 
the so-called {\bf combinatorial type} of $\hat{D}$. Now the key observation is that the description of the combinatorial type of 
$\hat{D}$ only requires a partial order on the set of $E_i$-classes $E_1,E_2,\cdots,E_N$, which is analogous to the partial order defined by the relation of ``infinitely near" in algebraic geometry (cf. \cite{Bea}), and this partial order
on the set $E_1,E_2,\cdots,E_N$ is completely determined by $(\vec{v}_k)$ as well. With this understood, roughly speaking, if we drop the requirement of the basis $H,E_1,E_2,\cdots, E_N$ being a 
reduced basis in a homological expression of $D$, we get the notion of a {\bf virtual homological expression} of $D$ 
(see Definition 3.5 for a precise explanation). In particular, a virtual homological expression of $D$ also determines a partial order on the set $E_1,E_2,\cdots,E_N$, as well as a {\bf virtual combinatorial type} (see Lemma 3.6). A virtual homological expression of $D$ is said to be {\bf realizable} if its virtual combinatorial type is the combinatorial type of a symplectic arrangement in $\C\P^2$ (cf. Definition 3.7). 

With the preceding understood, we now state the relevant theorem. Let $(\vec{v}_k)\in\Omega(D)$ be an element which is realized by a symplectic structure $\omega\in Z(\vec{\delta})$, such that the successive blowing-down procedure associated to the corresponding homological expression of $D$ can be performed to the final stage 
of $\C\P^2$, resulting in a symplectic arrangement $\hat{D}$ in $\C\P^2$. Let $H,E_1,E_2,\cdots,E_N$ be the 
reduced basis, with respect to which the $a$, $b_i$-coefficients of the class of $F_k$ are given by the entries 
in the vector $\vec{v}_k$. As we mentioned earlier, there is a partial order $\leq$ of infinitely-nearness on the set 
$E_1,E_2,\cdots,E_N$, which depends only on $(\vec{v}_k)$. We mention that since a minimal element $E_i$
under the partial order $\leq$ is always the last to be blown-down, there is a point denoted by $\hat{E}_i$ in
$\C\P^2$ assigned to the minimal class $E_i$ (see Section 3 for more details). Finally, the combinatorial type of $\hat{D}$ also depends only on $(\vec{v}_k)$. 

Let $E_r, E_s,E_t$ be three distinct $E_i$-classes, and let $\gamma:=H-E_r-E_s-E_t$. Set 
$(\vec{v}_k^\prime):=R(\gamma)(\vec{v}_k)$. Then observe that if we let $H^\prime, E_1^\prime,E_2^\prime,\cdots E_N^\prime$ be the image of $H,E_1,E_2,\cdots,E_N$ under the reflection $R(\gamma)$, and write 
$\vec{v}_k^\prime=(a_k^\prime, b_{k1}^\prime,\cdots,b_{kN}^\prime)$, then 
$$
a_kH-\sum_{i=1}^N b_{ki} E_i=a_k^\prime H^\prime-\sum_{i=1}^N b_{ki}^\prime E_i^\prime.
$$
In particular, $\vec{v}_k^\prime$ encodes the coefficients of the class of $F_k$ with respect to the basis $H^\prime, E_1^\prime,E_2^\prime,\cdots E_N^\prime$, which is only a standard basis (see Section 2 for a definition).

\begin{theorem}
Assume the components of $\hat{D}$ is $\hat{J}$-holomorphic where $\hat{J}$ is a compatible almost complex structure on $\C\P^2$. Furthermore, assume $E_r,E_s,E_t$ satisfy one of the following conditions:
\begin{itemize}
\item [{(1)}] $E_r,E_s,E_t$ are minimal with respect to the partial order $\leq$, and the points 
$\hat{E}_r,\hat{E}_s,\hat{E}_t\in \C\P^2$ are not contained in a degree $1$ $\hat{J}$-holomorphic sphere. 
\item [{(2)}] $E_r,E_s$ are minimal, $E_t$ is infinitely near to $E_s$ of order $1$, such that the
point $\hat{E}_t$ is not contained in the proper transform of the degree $1$ $\hat{J}$-holomorphic sphere passing through $\hat{E}_r,\hat{E}_s$.
\item [{(3)}] $E_r$ is minimal, $E_s$ is infinitely near to $E_r$ of order $1$, $E_t$ 
is infinitely near to $E_s$ of order $1$, and $E_t$ is not a satellite class (cf. Section 3). 
\end{itemize}
Then the assignment $F_k\mapsto a_k^\prime H^\prime-\sum_{i=1}^N b_{ki}^\prime E_i^\prime$ is a virtual
homological expression of $D$. Moreover, if the assumptions {\em (a)}, {\em (b)} are satisfied by the virtual homological expression, then there is a symplectic arrangement $\hat{D}^\prime$ in $\C\P^2$ which realizes the virtual combinatorial type of the virtual homological expression $F_k\mapsto a_k^\prime H^\prime-\sum_{i=1}^N b_{ki}^\prime E_i^\prime$. Note that in particular, if $\hat{D}^\prime$ does not exist, neither does $\hat{D}$, and as a
consequence, the element $(\vec{v}_k)\in\Omega(D)$ can be eliminated. 
\end{theorem}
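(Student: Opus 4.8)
The plan is to realize the reflection $R(\gamma)$, with $\gamma=H-E_r-E_s-E_t$, as a genuine \emph{symplectic} quadratic Cremona transformation centered at the three (possibly infinitely near) base points encoded by $E_r,E_s,E_t$, and to transport $\hat{D}$ through this transformation to produce $\hat{D}^\prime$. The first observation, purely homological, is that $R(\gamma)$ acts on a standard basis exactly as the classical Cremona transformation: one computes $R(\gamma)(H)=2H-E_r-E_s-E_t$, $R(\gamma)(E_a)=H-E_b-E_c$ for $\{a,b,c\}=\{r,s,t\}$, and $R(\gamma)(E_i)=E_i$ for $i\notin\{r,s,t\}$. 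Since $R(\gamma)$ preserves the intersection form and $c_1(K_X)$, the image $H^\prime,E_1^\prime,\cdots,E_N^\prime$ is again a standard basis, and because the classes $A_k$ of the $F_k$ are unchanged (only re-expressed), the numerical data $\nu_k,g_k,\nu_{kl}$ are preserved, so $(\vec{v}_k^\prime)$ satisfies equations (1)--(3) of Definition 1.2. To upgrade this to the assertion that $F_k\mapsto a_k^\prime H^\prime-\sum_i b_{ki}^\prime E_i^\prime$ is a virtual homological expression, I would invoke the description of the partial order and virtual combinatorial type from Lemma 3.6: one checks that the transformed coefficients induce a consistent partial order of infinitely-nearness, using that conditions (1), (2), (3) are precisely the three admissible configurations of base points (three free points; two free points with one order-one infinitely near point; and a free chain of length three) for which the infinitely-near structure transforms coherently.

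The geometric heart of the argument is to carry $\hat{D}$ through the Cremona transformation in the pseudoholomorphic category. Working in $(\C\P^2,\hat{J})$, the key point is that through any two distinct points there is a unique $\hat{J}$-holomorphic line, so in case (1) the three connecting lines $L_{rs},L_{rt},L_{st}$ exist automatically as $\hat{J}$-holomorphic curves; in the infinitely near cases (2) and (3) the analogous statement uses a $\hat{J}$-line through $\hat{E}_r,\hat{E}_s$ together with the prescribed tangent directions. The position hypotheses are exactly what guarantee nondegeneracy: in case (1) the requirement that $\hat{E}_r,\hat{E}_s,\hat{E}_t$ lie on no common degree $1$ sphere makes the three lines distinct; in case (2) the requirement that $\hat{E}_t$ avoid the proper transform of $L_{rs}$ makes the infinitely near direction transverse to that line; and in case (3) the non-satellite condition on $E_t$ keeps the chain free. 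I would then blow up the three base points symplectically (iterating, with a controlled choice of point on the exceptional sphere, in the infinitely near cases), take proper transforms of the components of $\hat{D}$, and observe that the three classes $H-E_r-E_s$, $H-E_r-E_t$, $H-E_s-E_t$ are represented by \emph{pairwise disjoint} $\hat{J}$-holomorphic $(-1)$-spheres, disjointness being the computation $(H-E_r-E_s)\cdot(H-E_r-E_t)=0$. Blowing these three $(-1)$-spheres down returns $\C\P^2$ and pushes the proper transforms of the $F_k$ forward to a symplectic arrangement $\hat{D}^\prime$, with a compatible $\hat{J}^\prime$ for which the components of $\hat{D}^\prime$ are $\hat{J}^\prime$-holomorphic.

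With $\hat{D}^\prime$ in hand, the remaining task is to verify that its combinatorial type agrees with the virtual combinatorial type read off from $(\vec{v}_k^\prime)$. I would do this by bookkeeping: the blow-ups and blow-downs alter the local multiplicities of each $F_k$ at the base points and the pairwise intersection indices in a way dictated entirely by the homological action of $R(\gamma)$, and this matches, term by term, the combinatorial data that Lemma 3.6 extracts from the transformed vectors. Once assumptions (a), (b) hold for the new virtual homological expression, this identification shows $\hat{D}^\prime$ realizes that virtual combinatorial type. Since the construction is reversible --- its reverse being the symplectic Cremona transformation applied to $\hat{D}^\prime$, in accordance with the involution $R(\gamma)^2=\mathrm{id}$ --- the arrangements $\hat{D}$ and $\hat{D}^\prime$ exist or fail to exist together; in particular, nonexistence of $\hat{D}^\prime$ forces nonexistence of $\hat{D}$ and eliminates $(\vec{v}_k)\in\Omega(D)$.

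The step I expect to be the main obstacle is the pseudoholomorphic realization of the blow-up/blow-down in the infinitely near cases (2) and (3) together with the precise matching of combinatorial types. Handling an infinitely near base point requires iterated symplectic blow-ups with a controlled choice of the point on the exceptional sphere (the prescribed tangent direction), and one must ensure the proper transforms of the components of $\hat{D}$ remain pseudoholomorphic and acquire exactly the predicted singularities. Verifying that the three $(-1)$-classes are still simultaneously representable by disjoint $\hat{J}$-spheres after such iterated blow-ups, and that the blow-down introduces no spurious tangencies, is where the position hypotheses must be used most carefully.
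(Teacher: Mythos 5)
Your overall strategy --- blow up at the three base points encoded by $E_r,E_s,E_t$, identify the three classes $E_r^\prime,E_s^\prime,E_t^\prime$ with exceptional curves, and blow those down to get $\hat{D}^\prime$ --- is broadly the route the paper takes, but two of your key steps do not go through as stated. First, your claim that $H-E_s-E_t$, $H-E_r-E_t$, $H-E_r-E_s$ are \emph{simultaneously} represented by pairwise disjoint embedded $\hat{J}$-holomorphic $(-1)$-spheres is false in the infinitely-near cases (2) and (3). In case (2), for instance, there is no point $\hat{E}_t$ in $\C\P^2$, and the class $E_s^\prime=H-E_r-E_t$ is not represented by an irreducible curve in the three-fold blow-up $\check{X}$: it is represented by the reducible union of the $(-1)$-sphere in class $H-E_r-E_s$ and the $(-2)$-sphere in class $E_s-E_t$ (the component of $\check{D}$ recording the infinitely-near relation). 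The blow-down therefore cannot be done in one simultaneous step; it must be performed sequentially through a chain in which a $(-2)$-sphere only becomes a blowable $(-1)$-sphere after an earlier blow-down, and the extension of the partial order of infinitely-nearness to $E_r^\prime,E_s^\prime,E_t^\prime$ has to be defined accordingly. This is precisely how the paper organizes cases (2) and (3).

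Second, and more seriously, you never explain where the hypothesis that assumptions (a), (b) hold \emph{for the virtual homological expression} enters, yet this is the crux of the paper's proof. The difficulty arises when one of the exceptional $(-1)$-spheres to be blown down, say $C_r$ in class $E_r^\prime=H-E_s-E_t$, is itself a component of $\check{D}$ (the descendant of a component $S$ of $D$ with class $H-E_s-E_t-E_{j_1}-\cdots-E_{j_n}$). Then $C_r$ cannot be perturbed before blowing down, and one must verify that at every earlier stage of the successive blow-down the descendant of $S$ occupies one of the two distinguished coordinate axes of the local symplectic model in each ball $B(\hat{E}_m)$. Since $S$ has $a$-coefficient $1$ in the old basis, the assumptions (a), (b) on the original homological expression say nothing about it; it is only because $S$ has zero $a^\prime$-coefficient in the new basis that (a), (b) applied to the \emph{virtual} expression rule out the competing components that would otherwise occupy both axes. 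Your proposal treats (a), (b) as an unexamined black box ("once assumptions (a), (b) hold \dots this identification shows \dots"), which leaves the central step unproved. Relatedly, the first assertion of the theorem --- that each $A_k^\prime$ is positive with respect to an order on $H^\prime,E_1^\prime,\dots,E_N^\prime$, so that the assignment is a genuine virtual homological expression --- is obtained in the paper as a byproduct of carrying out the full successive blow-down of $\tilde{X}_N$ along the $E_i^\prime$ (as in the proof of Lemma 2.3), not by inspection of the transformed coefficients. Finally, your reversibility claim ("$\hat{D}$ and $\hat{D}^\prime$ exist or fail to exist together") is stronger than what is needed or established: the elimination statement only requires the one-way implication that existence of $\hat{D}$ produces $\hat{D}^\prime$, and the paper explicitly refrains from constructing a symplectic analog of the Cremona map itself.
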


We remark that $(\vec{v}_k^\prime)$ is not necessarily realized by some $\omega\in Z(\vec{\delta})$ for any 
$\vec{\delta}$, and the virtual homological expression  
$F_k\mapsto a_k^\prime H^\prime-\sum_{i=1}^N b_{ki}^\prime E_i^\prime$ is not necessarily a homological expression of $D$. Furthermore, the symplectic arrangement $\hat{D}^\prime$ in $\C\P^2$ is not necessarily 
resulted from a successive blowing-down associated to a homological expression of $D$.

In comparison if this were in the algebraic geometry setting, then $D$ would be a complex configuration in a 
rational algebraic surface $X=\C\P^2 \# N \overline{\C\P^2}$, where the reflection $R(\gamma)$ is associated with 
a quadratic Cremona transformation $\Psi: \C\P^2\dashrightarrow \C\P^2$ (a birational automorphism of $\C\P^2$), and $\hat{D}$ is a complex arrangement in $\C\P^2$ which is the direct image of a successive blowing down from
$X$ to $\C\P^2$, i.e., a birational morphism $\pi: X\rightarrow \C\P^2$. Moreover, there is a birational
morphism $\pi^\prime: X\rightarrow \C\P^2$ such that the Cremona transformation 
$\Psi=\pi^\prime \circ \pi^{-1}$, and the complex arrangement 
$\hat{D}^\prime$ is simply the direct image of $D\subset X$ under $\pi^\prime: X\rightarrow \C\P^2$. With this 
understood, even though in Theorem 1.8 we did not attempt to establish any analog of the Cremona 
map $\Psi$ in the symplectic setting, we were able to show the existence of a symplectic arrangement 
$\hat{D}^\prime$ realizing the virtual combinatorial type resulted from the reflection $R(\gamma)$,
i.e., a correspondence $\Psi_D: \hat{D}\mapsto \hat{D}^\prime$, which is determined by $R(\gamma)$ and the homological expression of $D$. In many situations, this is good enough for applications. 
A proof of Theorem 1.8 is given in Section 3.4.

\begin{example}
(1) Consider a symplectic line arrangement $\hat{D}_1$ in $\C\P^2$, called a Fano plane (cf. \cite{RuS}), which consists of $7$ degree $1$ symplectic spheres $\hat{F}_1, \hat{F}_2,\cdots,\hat{F}_7$ with $7$ triple intersection points $p_1,p_2,\cdots,p_7$, where each $\hat{F}_k$ is $\hat{J}$-holomorphic for some compatible almost complex structure $\hat{J}$. Without loss of generality, we assume the following is the intersection pattern of the $7$ spheres:
\begin{itemize}
\item $\hat{F}_1\cap \hat{F}_2\cap \hat{F}_3=\{p_1\}$, $\hat{F}_1\cap \hat{F}_4\cap \hat{F}_6=\{p_2\}$, 
$\hat{F}_1\cap \hat{F}_5\cap \hat{F}_7=\{p_3\}$, $\hat{F}_2\cap \hat{F}_4\cap \hat{F}_7=\{p_4\}$, 
\item $\hat{F}_2\cap \hat{F}_5\cap \hat{F}_6=\{p_5\}$, 
$\hat{F}_3\cap \hat{F}_4\cap \hat{F}_5=\{p_6\}$, $\hat{F}_3\cap \hat{F}_6\cap \hat{F}_7=\{p_7\}$.
\end{itemize}
We apply Lemma 3.1 (see Section 3.1) to blow up at $p_1,p_2,\cdots,p_7$, and let $E_1,E_2,\cdots,E_7$ be the exceptional $(-1)$-spheres, which has an area $\epsilon$ for a sufficiently small $\epsilon>0$. Let $F_1,F_2,\cdots,F_7$ be the proper transforms of  $\hat{F}_1, \hat{F}_2,\cdots,\hat{F}_7$ in $\C\P^2\#7\overline{\C\P^2}$, which is a disjoint union of $7$ symplectic $(-2)$-spheres, a symplectic configuration we denote by $D_1$. It follows easily that when 
$\epsilon$ is chosen sufficiently small, $H, E_1,E_2,\cdots,E_7$ is a reduced basis of $\C\P^2\#7\overline{\C\P^2}$. With this understood, the following is the corresponding homological expression of $D_1$, $F_k\mapsto A_k$, where
\begin{itemize}
\item $A_1=H-E_{1}-E_{2}-E_{3}$, $A_2=H-E_{1}-E_{4}-E_{5}$, $A_3=H-E_{1}-E_{6}-E_{7}$, 
\item $A_4=H-E_{2}-E_{4}-E_{6}$, $A_5=H-E_{3}-E_{5}-E_{6}$, $A_6=H-E_{2}-E_{5}-E_{7}$, 
\item $A_7=H-E_{3}-E_{4}-E_{7}$.
\end{itemize}
Now we pick a point $p_8\in \C\P^2$ such that $p_6,p_7,p_8$ are not lying in a degree $1$
$\hat{J}$-holomorphic sphere. We blow up at $p_8$ and let $E_8$ be the exceptional $(-1)$-sphere, which also
has area $\epsilon$. Then $H, E_1,E_2,\cdots,E_8$ is a reduced basis of $\C\P^2\#8\overline{\C\P^2}$.
We consider the $(-2)$-class $\gamma=H-E_6-E_7-E_8$, where we note that $E_6,E_7,E_8$ satisfy (1) of Theorem 1.8, as all the classes $E_1,E_2,\cdots,E_8$ are minimal with respect to the partial order of infinitely-nearness
in this case. If we let $H^\prime=R(\gamma)(H)$, and $E_i^\prime=R(\gamma)(E_i)$, $i=1,2,\cdots,8$, then by Theorem 1.8, we obtain the following virtual homological expression of $D_1$, i.e., $F_k\mapsto A_k^\prime$, where
\begin{itemize}
\item $A_1^\prime=2H^\prime-E_{1}^\prime-E_{2}^\prime-E_{3}^\prime-E_6^\prime-E_7^\prime-E_8^\prime$, 
\item $A_2^\prime=2H^\prime-E_1^\prime-E_{4}^\prime-E_{5}^\prime-E_6^\prime-E_7^\prime-E_8^\prime$, 
\item $A_3^\prime=E_8^\prime-E_{1}^\prime$, $A_4^\prime=H^\prime-E_{2}^\prime-E_{4}^\prime-E_{6}^\prime$, 
$A_5^\prime=H^\prime-E_{3}^\prime-E_{5}^\prime-E_{6}^\prime$, 
\item $A_6^\prime=H^\prime-E_{2}^\prime-E_{5}^\prime-E_{7}^\prime$, 
$A_7^\prime=H^\prime-E_{3}^\prime-E_{4}^\prime-E_{7}^\prime$.
\end{itemize}
Furthermore,  there is a symplectic arrangement $\hat{D}_1^\prime$ in $\C\P^2$, which realizes the virtual combinatorial type of the virtual homological expression. 

(2) Consider the following symplectic arrangement $\hat{D}_2$ in $\C\P^2$, which consists of $3$ degree $1$
symplectic spheres $\hat{F}_1, \hat{F}_2,\hat{F}_3$ intersecting at a single point $p_1$, and a degree $2$
symplectic sphere $\hat{F}_4$, which intersects with $\hat{F}_1, \hat{F}_2,\hat{F}_3$ at $3$ distinct points $p_2,p_3,p_4$ other than $p_1$, with a tangency of order $2$, where each $\hat{F}_k$ is $\hat{J}$-holomorphic for some compatible almost complex structure $\hat{J}$. We apply Lemma 3.1 to blow up at $p_1,p_2,p_3,p_4$, and let $E_1,E_2,E_3,E_4$ be the exceptional $(-1)$-spheres, which has an area $2\epsilon$ for a sufficiently small 
$\epsilon>0$. Let $F_1,F_2, F_3,F_4$ be the proper transforms of $\hat{F}_1, \hat{F}_2,\hat{F}_3,\hat{F}_4$. 
We continue to blow up at the intersection of $F_4$ with $F_1,F_2, F_3$, and let $E_5,E_6,E_7$ be the corresponding exceptional $(-1)$-spheres which has an area $\epsilon$. We continue to denote by 
$F_1,F_2, F_3,F_4$ the proper transforms, and let $F_5,F_6,F_7$ be the proper transforms of $E_2,E_3,E_4$. 
Then we get a symplectic configuration $D_2$ in $\C\P^2\#7\overline{\C\P^2}$, which consists of a disjoint union
of $7$ symplectic $(-2)$-spheres $F_1,F_2,\cdots,F_7$. When $\epsilon$ is chosen sufficiently small, $H, E_1,E_2,\cdots,E_7$ is a reduced basis of $\C\P^2\#7\overline{\C\P^2}$, and the following is the corresponding homological expression of $D_2$, $F_k\mapsto A_k$, where
\begin{itemize}
\item $A_1=H-E_{1}-E_{2}-E_{5}$, $A_2=H-E_{1}-E_{3}-E_{6}$, $A_3=H-E_{1}-E_{4}-E_{7}$, 
\item $A_4=2H-E_{2}-E_3-E_{4}-E_5-E_{6}-E_7$, 
\item $A_5=E_{2}-E_{5}$, $A_6=E_{3}-E_{6}$, $A_7=E_{4}-E_{7}$.
\end{itemize}
With this understood, we pick a point $p_8\in \C\P^2$ such that $p_2,p_3,p_8$ are not lying in a degree $1$
$\hat{J}$-holomorphic sphere. We blow up at $p_8$ and let $E_8$ be the exceptional $(-1)$-sphere, which also
has area $\epsilon$. Then $H, E_1,E_2,\cdots,E_8$ is a reduced basis of $\C\P^2\#8\overline{\C\P^2}$.
We consider the $(-2)$-class $\gamma=H-E_2-E_3-E_8$, where we note that $E_2,E_3,E_8$ satisfy (1) of 
Theorem 1.8, as in this case, the classes $E_1,E_2, E_3,E_4,E_8$ are minimal with respect to the partial order 
of infinitely-nearness. If we let $H^\prime=R(\gamma)(H)$, and $E_i^\prime=R(\gamma)(E_i)$, $i=1,2,\cdots,8$, 
then by Theorem 1.8, we obtain the following virtual homological expression of $D_2$, i.e., 
$F_k\mapsto A_k^\prime$, where
\begin{itemize}
\item $A_1^\prime=H^\prime-E_{1}^\prime-E_{2}^\prime-E_{5}^\prime$, 
$A_2^\prime=H^\prime-E_1^\prime-E_{3}^\prime-E_6^\prime$, 
\item $A_3^\prime=2H^\prime-E_1^\prime-E_{2}^\prime-E_{3}^\prime-E_{4}^\prime-E_7^\prime-E_{8}^\prime$,
\item $A_4^\prime=2H^\prime-E_{2}^\prime-E_{3}^\prime-E_{4}^\prime-E_{5}^\prime-E_{6}^\prime-E_{7}^\prime$, 
\item $A_5^\prime=H^\prime-E_{3}^\prime-E_{5}^\prime-E_{8}^\prime$, 
$A_6^\prime=H^\prime-E_{2}^\prime-E_{6}^\prime-E_{8}^\prime$, $A_7^\prime=E_{4}^\prime-E_{7}^\prime$.
\end{itemize}
Furthermore, there is a symplectic arrangement $\hat{D}_2^\prime$ in $\C\P^2$, which realizes the virtual combinatorial type of the virtual homological expression. 
\end{example}

It is easy to see that the virtual homological expressions in Example 1.9(1) and Example 1.9(2) are equivalent,
and the symplectic arrangements $\hat{D}_1^\prime$ and $\hat{D}_2^\prime$ have the same combinatorial type.
We formalize it in the following definition.

\begin{figure}[h]
   \centering
   \includegraphics[width=0.6\textwidth]{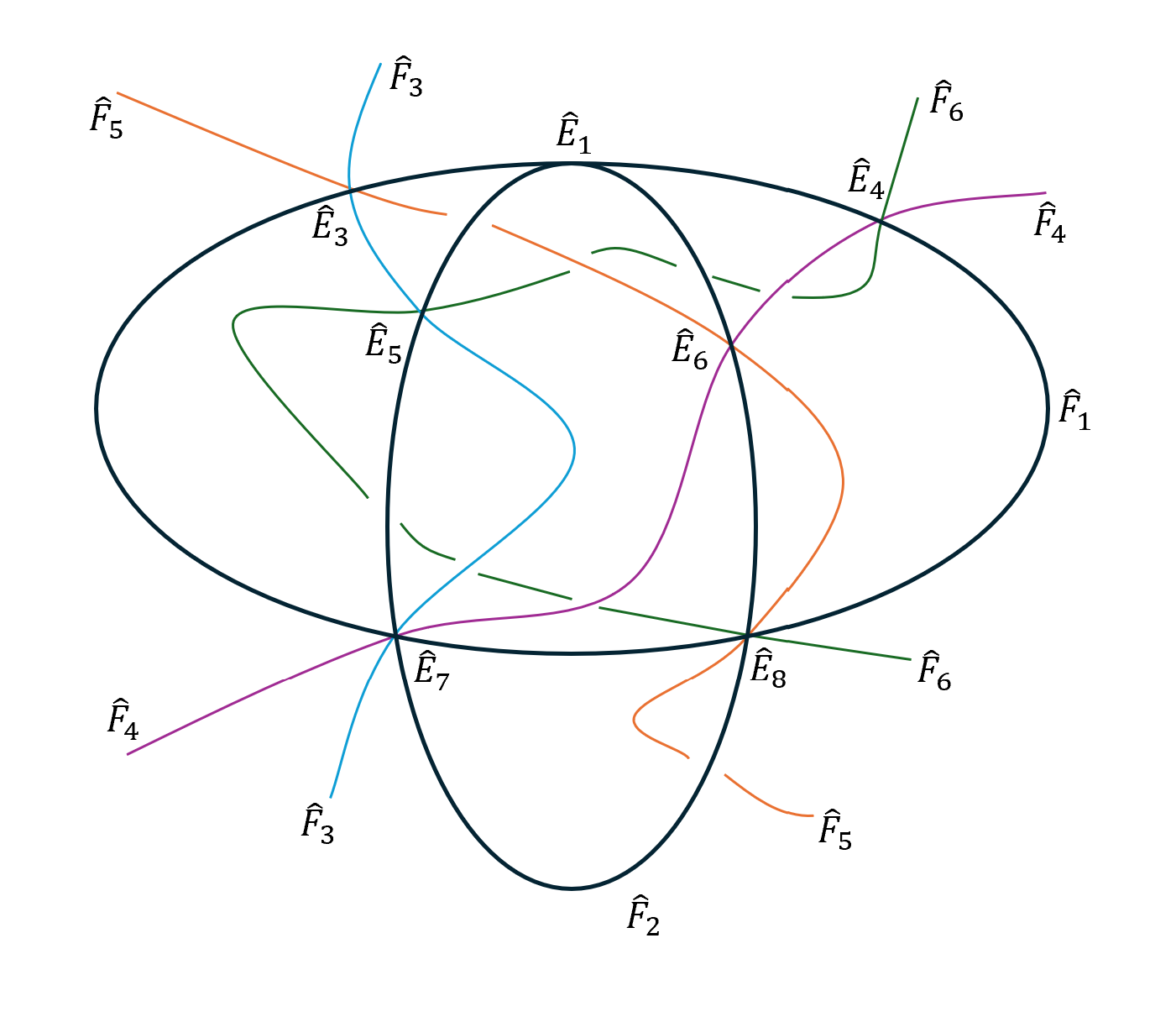}
   \caption*{Figure 1}
\end{figure}

\begin{definition}
Let $\hat{D}$ denote a symplectic arrangement in $\C\P^2$, which consists of $2$ degree $2$ symplectic spheres
$\hat{F}_1,\hat{F}_2$, and $4$ degree $1$ symplectic spheres $\hat{F}_3,\hat{F}_4,\hat{F}_5,\hat{F}_6$, realizing  the virtual combinatorial type of the following virtual homological expression of a disjoint union of $7$
symplectic $(-2)$-spheres in $\C\P^2\#8\overline{\C\P^2}$ (see Figure 1):
\begin{itemize}
\item $A_1=2H-E_{1}-E_{2}-E_{3}-E_{4}-E_{7}-E_{8}$,
\item $A_2=2H-E_{1}-E_{2}-E_{5}-E_{6}-E_{7}-E_{8}$,
\item $A_3=H-E_{3}-E_{5}-E_{7}$, $A_4=H-E_{4}-E_{6}-E_{7}$,
\item $A_5=H-E_{3}-E_{6}-E_{8}$, $A_6=H-E_{4}-E_{5}-E_{8}$, $A_7=E_{1}-E_{2}$.
\end{itemize}
We note that $\hat{F}_1,\hat{F}_2$ are tangent at $\hat{E}_1$. All other intersection points are transversal. 
\end{definition}

With the preceding understood, the following theorem is proved in Section 4.

\begin{theorem}
Assume there is a symplectic arrangement $\hat{D}$ in $\C\P^2$ (with respect to a K\"{a}hler form $\omega$) 
as defined in Definition 1.10 such that $\hat{D}$ is $\hat{J}$-holomorphic for some $\omega$-tame 
almost complex structure $\hat{J}$. Then there exists a smooth path $J_t$ of $\omega$-tame almost complex structures, $t\in [0,1]$, where $J_1=\hat{J}$ and $J_0$ is integrable, such that one of the following two cases 
must occur: 
\begin{itemize}
\item [{(i)}] There exists a smooth isotopy of $J_t$-holomorphic arrangement $\hat{D}_t$, for $t\in [0,1]$, such that 
$\hat{D}_t=\hat{D}$ at $t=1$ and at $t=0$, $\hat{D}_t$ is a complex arrangement with the same combinatorial type
(as shown in Figure 1). 
\item [{(ii)}] There is a smooth isotopy of $J_t$-holomorphic arrangement $\hat{D}_t$ for
$t\in (0,1]$ such that $\hat{D}_t=\hat{D}$ at $t=1$, and $\lim_{t\rightarrow 0} \hat{D}_t$ is a complex arrangement 
with a combinatorial type as shown in either Figure 2(1) or Figure 2(2).
\end{itemize}
In particular, there is a complex arrangement in $\C\P^2$ which has a combinatorial type as shown in 
either Figure 1, or Figure 2(1), or Figure 2(2).
\end{theorem}

\begin{figure}[h]
   \centering
   \begin{subfigure}[b]{0.45\textwidth}
      \centering
      \includegraphics[height=5.5cm]{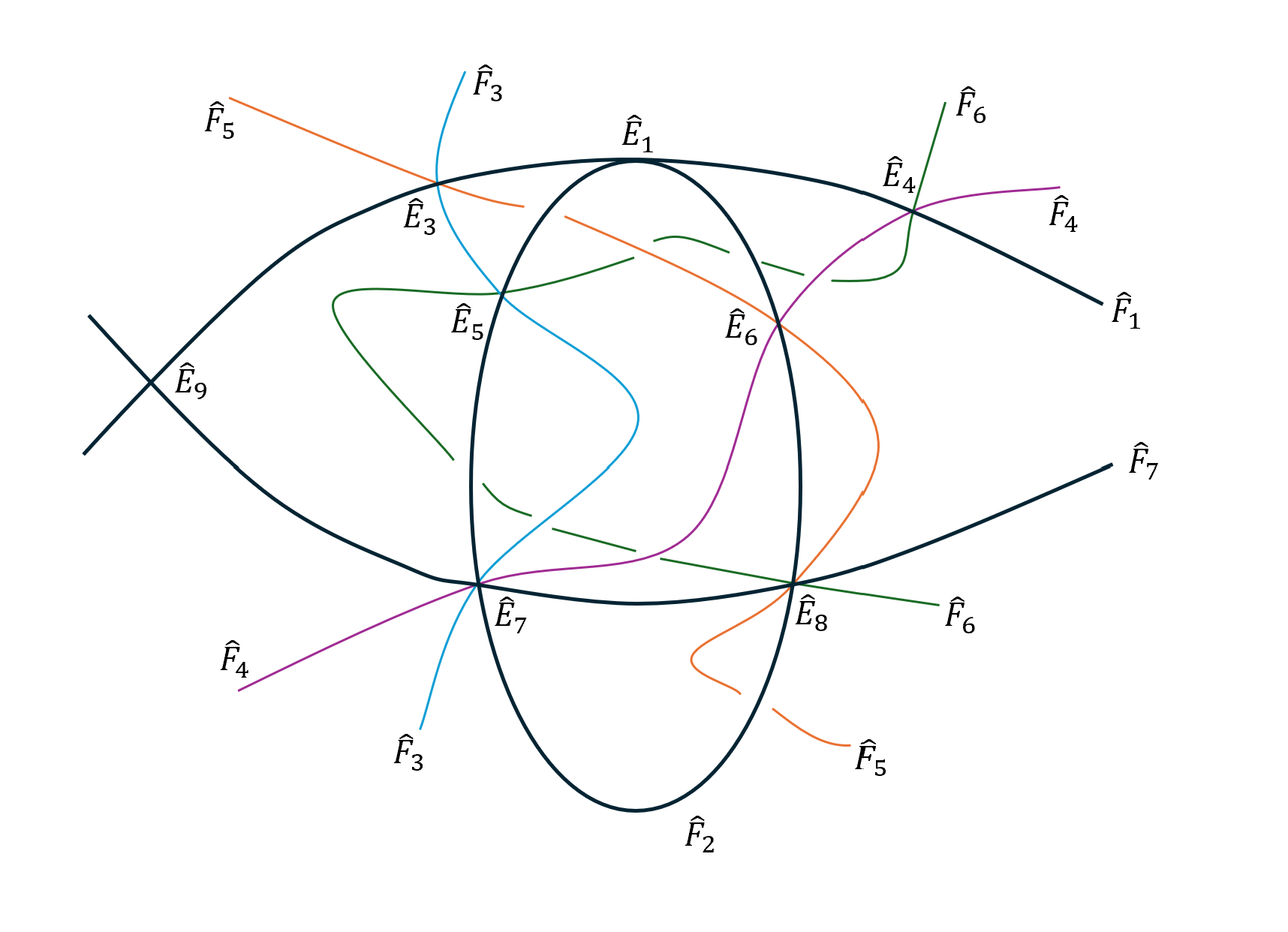}
      \caption*{Figure 2(1)}
   \end{subfigure}
   \hfill
   \begin{subfigure}[b]{0.45\textwidth}
      \centering
      \includegraphics[height=5.5cm]{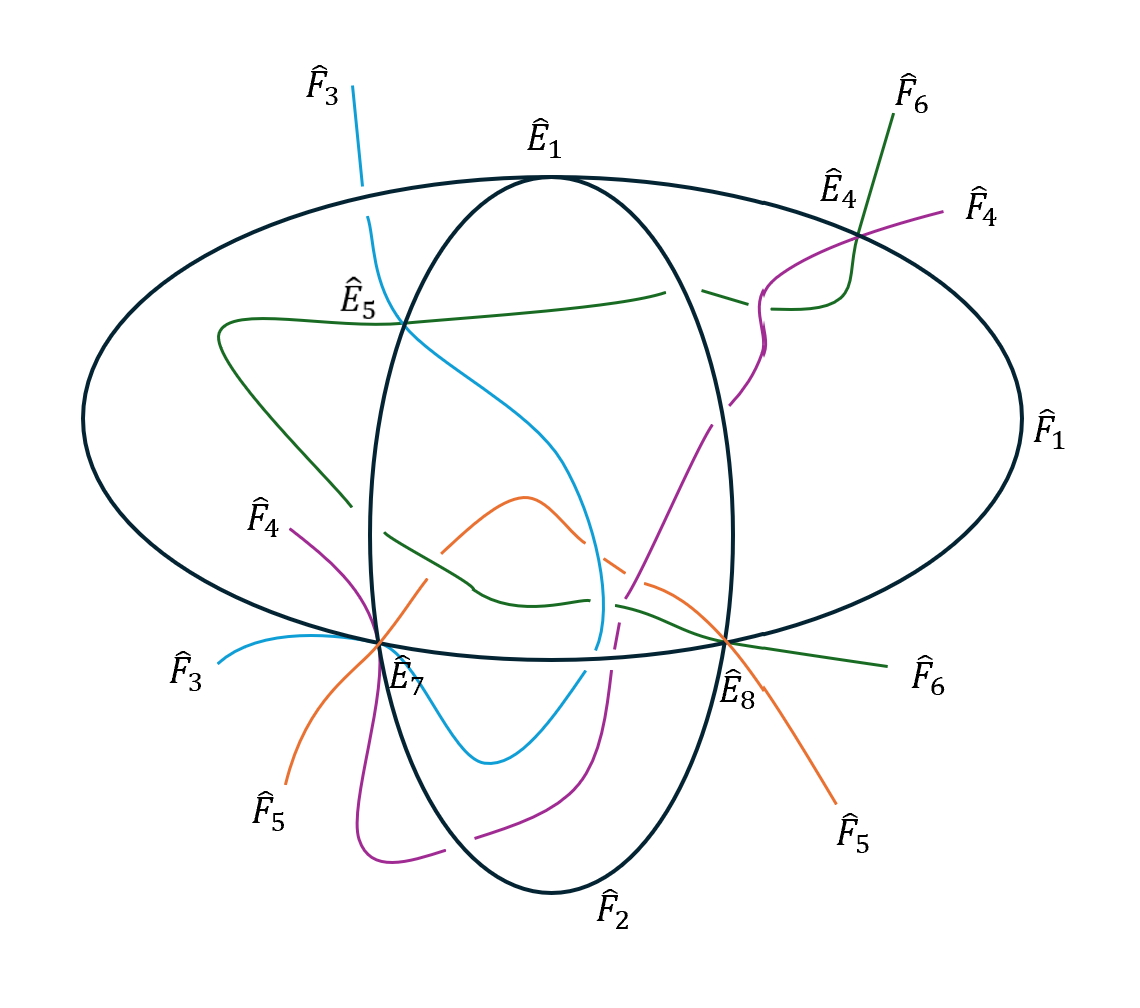}
      \caption*{Figure 2(2)}
   \end{subfigure}
\end{figure}

We observe the following corollary of Theorem 1.11.

\begin{corollary}
There exists no symplectic arrangement in $\C\P^2$ which has the combinatorial type of $\hat{D}$ 
in Definition 1.10.
\end{corollary}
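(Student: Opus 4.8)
The plan is to argue by contradiction, using Theorem 1.11 to transfer the question from the symplectic to the holomorphic category, where Hirzebruch's non-existence theorem for complex Fano planes applies. Suppose a symplectic arrangement $\hat{D}$ of the combinatorial type of Definition 1.10 exists. Since every symplectic form on $\C\P^2$ is, up to scale and symplectomorphism, the Fubini--Study K\"ahler form, and since a symplectic arrangement can be made pseudoholomorphic for a compatible almost complex structure, we may assume $\hat{D}$ is $\hat{J}$-holomorphic for some $\hat{J}$ compatible with a K\"ahler form $\omega$, exactly as in the hypothesis of Theorem 1.11. Applying Theorem 1.11, we obtain a smooth path $J_t$ with $J_1=\hat{J}$ and $J_0$ integrable, together with a smooth isotopy $\hat{D}_t$ of $J_t$-holomorphic arrangements with $\hat{D}_1=\hat{D}$ and $\hat{D}_0$ a genuine complex arrangement. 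In particular, the combinatorial type of Definition 1.10 is realized by a complex arrangement $\hat{D}_0$ in $\C\P^2$. It therefore suffices to prove that no complex arrangement of this combinatorial type can exist.

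To establish the latter, I would use the fact that the combinatorial type of Definition 1.10 is, by construction, the one produced from a Fano plane by the reflection $R(\gamma)$ with $\gamma=H-E_6-E_7-E_8$ in Example 1.9(1). In the holomorphic setting this reflection is induced by an honest quadratic Cremona transformation $\Psi\colon \C\P^2\dashrightarrow\C\P^2$, based at three suitable points determined by the reflected homological data, and $\Psi$ is a birational involution carrying complex arrangements to complex arrangements with the reflected homology classes. Hence, applying the inverse transformation $\Psi^{-1}$ to $\hat{D}_0$ --- that is, blowing up the three base points and blowing down the three curves joining them, as encoded by the exceptional-type class $A_7=E_1-E_2$ and the two conic classes $A_1,A_2$ --- yields a complex arrangement of $7$ lines with $7$ triple points, i.e.\ a complex realization of the Fano $(7_3)$ configuration. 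By Hirzebruch's theorem this does not exist, so $\hat{D}_0$ cannot exist, and the contradiction proves the corollary.

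The main obstacle will be the bookkeeping in the second step: verifying that the inverse Cremona transformation sends the combinatorial type of Definition 1.10 exactly onto the Fano configuration. One must check that the three base points lie in general position with respect to $\hat{D}_0$, so that $\Psi^{-1}$ neither creates nor destroys incidences beyond those prescribed by the reflected classes, and then track each of the two conics $\hat{F}_1,\hat{F}_2$ and four lines $\hat{F}_3,\dots,\hat{F}_6$ through the transformation, using the infinitely-near data recorded by $A_7=E_1-E_2$ (the tangency of the two conics) to confirm that the images are seven lines meeting in seven triple points. Once this matching is in place, the non-existence is immediate from Theorem 1.11 together with Hirzebruch's result; everything else is formal.
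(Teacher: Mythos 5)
Your proposal is correct in outline but takes a genuinely different route from the paper at the Cremona stage. Both arguments begin identically: apply Theorem 1.11 to replace $\hat{D}$ by a complex arrangement of the same combinatorial type. You then propose to apply the \emph{inverse} quadratic Cremona transformation to recover a complex Fano plane and invoke Hirzebruch. This does work: the inverse is the reflection along $\gamma=H-E_1-E_7-E_8$ in the labelling of Definition 1.10, based at the three intersection points of the two conics (the tangency point $\hat{E}_1$ and the transverse points $\hat{E}_7,\hat{E}_8$, which cannot be collinear since they lie on an irreducible conic), and the bookkeeping you flag goes through --- all seven classes of Definition 1.10 reflect to classes of the form $H-E_i-E_j-E_k$ with pairwise intersection zero, which is exactly the Fano incidence pattern. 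The paper instead applies a \emph{further forward} Cremona transformation, the reflection along $H-E_6-E_7-E_8$, based at three points of the conic $\hat{F}_2$ (non-collinear for the same reason); two of the three blown-down lines are already components of the arrangement, and the image is a much simpler configuration: three lines concurrent at one point together with a conic tangent to each of them to order $2$ and avoiding the concurrency point. The paper then rules this out by an elementary discriminant computation (three distinct slopes satisfying the tangency condition force the discriminant quadratic to vanish identically, making the conic a double line). The trade-off is significant for the paper's purposes: your route is shorter given Hirzebruch's theorem as a black box, but since Corollary 1.13(1) --- the nonexistence of symplectic Fano planes --- is deduced from Corollary 1.12, your proof would make that application logically dependent on Hirzebruch's result, whereas the abstract explicitly advertises the symplectic proof as independent of both Hirzebruch and Ruberman--Starkston. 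The paper's self-contained elementary endgame is precisely what secures that independence.
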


\begin{proof}
Assume to the contrary that there is a symplectic arrangement which has the combinatorial type of $\hat{D}$. 
Then by Theorem 1.11, there is a complex arrangement in $\C\P^2$, denoted by $\hat{D}_0$, which has a combinatorial type as shown in either Figure 1, or Figure 2(1), or Figure 2(2). We will show that such a complex arrangement cannot exist by applying a suitable quadratic Cremona transformation $\Psi: \C\P^2\dashrightarrow \C\P^2$ to $\hat{D}_0$ to reach a contradiction, which finishes the proof of Corollary 1.12.

Consider first the case where the complex arrangement $\hat{D}_0$ has the combinatorial type as depicted in Figure 2(1). We apply the quadratic Cremona transformation $\Psi: \C\P^2\dashrightarrow \C\P^2$ to 
$\hat{D}_0$, which has three proper base points $\hat{E}_3, \hat{E}_7,\hat{E}_8$ (i.e., the one which 
corresponds to the reflection $R(\gamma)$, $\gamma=H-E_3-E_7-E_8$). As such, $\Psi$ is obtained by blowing up at $\hat{E}_3,\hat{E}_7,\hat{E}_8$ (note that $\hat{E}_3,\hat{E}_7,\hat{E}_8$ do not lie on a line
because otherwise, the line would intersect the line $\hat{F}_1$ in $2$ distinct points $\hat{E}_3, \hat{E}_9$, which is a contradiction), then blowing down the proper transforms of the 
$3$ lines passing through each pair of points $\hat{E}_3,\hat{E}_7$, $\hat{E}_3,\hat{E}_8$ and 
$\hat{E}_7,\hat{E}_8$ (see Chapter 5 in \cite{Moe}, or Figure 3(1) in Section 3.4). 
Note that in fact, the $3$ lines, i.e., 
those containing the pairs $\hat{E}_3,\hat{E}_7$, $\hat{E}_3,\hat{E}_8$ and $\hat{E}_7,\hat{E}_8$, are actually the components $\hat{F}_3,\hat{F}_5$ and $\hat{F}_7$ of $\hat{D}_0$. With this understood, it follows easily that under the birational automorphism $\Psi: \C\P^2\dashrightarrow \C\P^2$, $\hat{F}_2$ remains to be a conic,
$\hat{F}_1$, $\hat{F}_4$, and $\hat{F}_6$ remain to be a line, with the $3$ lines intersecting at $\hat{E}_4$, 
and each of the lines $\hat{F}_3,\hat{F}_5$ and $\hat{F}_7$ contracts to a point.  
Moreover, the image of $\hat{D}_0$ under 
$\Psi: \C\P^2\dashrightarrow \C\P^2$ is a complex arrangement consisting of $3$ lines $L_1,L_2,L_3$ intersecting at a single point $\hat{E}_4$, and a conic $S$ intersecting each of $L_1,L_2,L_3$ at a single point, 
$\hat{E}_1,\hat{E}_5,\hat{E}_6$, with a tangency of order $2$, which in fact has the combinatorial type of 
$\hat{D}_2$ in Example 1.9(2). We will show that such a complex arrangement 
does not exist by a rather elementary argument. 

Without loss of generality, we may assume that the intersection points  $\hat{E}_1,\hat{E}_5,\hat{E}_6$ and
$\hat{E}_4$ are all contained in the affine part $\C^2$, and moreover, $\hat{E}_4$ is the origin of  $\C^2$ and
the $3$ lines $L_1,L_2,L_3$ are given by equations $y=w_i x$, for $i=1,2,3$, where $w_i\in\C$, and $x,y$ are the coordinates of $\C^2$. With this understood, the conic $S$ is given by the zero set of a quadratic irreducible polynomial 
$$
Ax^2+Bxy+Cy^2+Dx+Ey+F=0,
$$
where $A,B,C,D,E,F\in\C$ and $F\neq 0$ because $S$ does not contain the origin $(0,0)$. To derive a contradiction, let $L$ be a line defined by $y=wx$, which has slope $w\in \C$. Then the intersection $S\cap L$
consists of points $(x,wx)$ where $x$ is a solution of the following quadratic equation
$$
(A+Bw+Cw^2)x^2+ (D+Ew)x +F=0. 
$$
In particular, if $(x,wx)$ is an intersection point of $S$ and $L$ with a tangency of order $2$, then the slope $w$
of $L$ must obey the following quadratic equation
$$
(D+Ew)^2-4(A+Bw+Cw^2)F=0. 
$$
In particular, the slopes $w_1,w_2,w_3$ of $L_1,L_2,L_3$ are $3$ distinct solutions of the above equation,
implying that the equation must be trivial, which is equivalent to 
$$
E^2-4CF=D^2-4AF=2DE-4BF=0. 
$$
Now we fix a square root $f$ of $F$, and choose square roots $a,c$ of $A,C$ such that $E=2cf$ and $D=2af$.
Then it follows easily that $B=2ac$ because $f\neq 0$. It follows that
$$
Ax^2+Bxy+Cy^2+Dx+Ey+F=a^2x^2+2acxy+c^2y^2+2afx+2cfy+f^2=(ax+cy+f)^2,
$$
contradicting the irreducibility of the polynomial. Hence a complex arrangement $\hat{D}_0$ 
with a combinatorial type as in Figure 2(1) cannot exist. 

Next assume the complex arrangement $\hat{D}_0$ has a combinatorial type as shown in Figure 1. We 
apply the same Cremona transformation $\Psi: \C\P^2\dashrightarrow \C\P^2$ which has three proper base 
points $\hat{E}_3, \hat{E}_7,\hat{E}_8$ to $\hat{D}_0$. In this case, the component $\hat{F}_2$ remains to 
be a conic but the conic $\hat{F}_1$ becomes a line. Moreover, $\hat{F}_4$ and $\hat{F}_6$ remain to be a line, with the $3$ lines intersecting at $\hat{E}_4$, and each of the lines $\hat{F}_3,\hat{F}_5$ contracts to a point.
It is easy to see that under the Cremona transformation $\Psi: \C\P^2\dashrightarrow \C\P^2$, $\hat{D}_0$ is transformed to a complex arrangement consisting of $3$ lines $L_1,L_2,L_3$ intersecting at a single point 
$\hat{E}_4$, and a conic $S$ intersecting each of $L_1,L_2,L_3$ at a single point, 
$\hat{E}_1,\hat{E}_5,\hat{E}_6$, with a tangency of order $2$. This is a contradiction, hence the complex arrangement $\hat{D}_0$ with a combinatorial type as shown in Figure 1 also cannot exist. 

Finally, assume $\hat{D}_0$ has a combinatorial type as shown in Figure 2(2). This time we apply a quadratic 
Cremona transformation to $\hat{D}_0$ which has proper base points $\hat{E}_7,\hat{E}_8$, with the third base point being infinitely near to $\hat{E}_7$. More concretely, we blow up at $\hat{E}_7,\hat{E}_8$, and then blow up at the intersection of the proper transform of $\hat{F}_3$ with the exceptional $(-1)$-sphere. It is easy to see that under this Cremona transformation, $\hat{F}_2$ remains to be a conic but $\hat{F}_1$ becomes a line, 
$\hat{F}_4$ and $\hat{F}_6$ remain to be a line, with the $3$ lines intersecting at $\hat{E}_4$, and each of the lines $\hat{F}_3,\hat{F}_5$ contracts to a point (see Chapter 5 in \cite{Moe}, or Figure 3(2) in Section 3.4). Again 
$\hat{D}_0$ is transformed to a complex arrangement consisting of $3$ lines $L_1,L_2,L_3$ intersecting 
at a single point, plus a conic $S$ intersecting each of $L_1,L_2,L_3$ at a single point with a tangency of 
order $2$. The contradiction implies that the complex arrangement $\hat{D}_0$ with a combinatorial type as shown in Figure 2(2) cannot exist. 

\end{proof}

It follows immediately that Theorem 1.8 (with Example 1.9) and Theorem 1.11, with Corollary 1.12, 
together imply the following corollary. 

\begin{corollary}
{\em(1)} The symplectic line arrangement $\hat{D}_1$ in $\C\P^2$ as described in Example 1.9(1), i.e., a Fano plane, does not exist. 

{\em(2)} The symplectic arrangement $\hat{D}_2$ which consists of three degree $1$ symplectic spheres and one
degree $2$ symplectic sphere in $\C\P^2$, as described in Example 1.9(2), does not exist. 
\end{corollary}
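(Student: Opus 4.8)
The plan is to prove both non-existence statements by contradiction, in each case reducing the problem to Corollary 1.12 by running the blow-up and reflection construction already carried out in Example 1.9 and then invoking the eliminating mechanism of Theorem 1.8. The point is that the existence of either configuration would, through Theorem 1.8, force the existence of a symplectic arrangement of the combinatorial type of Definition 1.10, which Corollary 1.12 has ruled out.

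For part (1), I would suppose a symplectic Fano plane $\hat{D}_1$ exists. By the definition of a symplectic arrangement its seven degree-$1$ components may be taken to be $\hat{J}$-holomorphic for a common $\omega$-compatible $\hat{J}$, so the standing $\hat{J}$-holomorphicity hypothesis of Example 1.9(1) and of Theorem 1.8 is in force. I would then follow Example 1.9(1) verbatim: blow up the seven triple points $p_1,\dots,p_7$ to produce the configuration $D_1$ of seven disjoint symplectic $(-2)$-spheres in $\C\P^2\#7\overline{\C\P^2}$ together with its reduced-basis homological expression, blow up an auxiliary point $p_8$ chosen so that $p_6,p_7,p_8$ do not lie on a degree-$1$ $\hat{J}$-holomorphic sphere, and apply the reflection $R(\gamma)$ along $\gamma=H-E_6-E_7-E_8$. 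Since $E_6,E_7,E_8$ are all minimal for the infinitely-near partial order, hypothesis (1) of Theorem 1.8 holds, so Theorem 1.8 yields a symplectic arrangement $\hat{D}_1'$ in $\C\P^2$ realizing the virtual combinatorial type of the virtual homological expression $F_k\mapsto A_k'$ listed there; moreover, blowing $D_1$ back down to $\C\P^2$ recovers $\hat{D}_1$, so $\hat{D}_1$ is precisely the arrangement $\hat{D}$ of Theorem 1.8. The decisive step is the identification recorded in the remark following Example 1.9: the virtual homological expression of $\hat{D}_1'$ is equivalent to the one in Definition 1.10, so $\hat{D}_1'$ is a symplectic arrangement of exactly that combinatorial type (two degree-$2$ spheres and four degree-$1$ spheres, with the order-$2$ tangency of the two conics at $\hat{E}_1$, their transverse meetings at $\hat{E}_7,\hat{E}_8$, and the prescribed line–conic incidences). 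Corollary 1.12 asserts that no such arrangement exists; equivalently, by the last sentence of Theorem 1.8, the non-existence of $\hat{D}_1'$ forces the non-existence of $\hat{D}_1$. This contradiction proves (1).

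For part (2), I would push the same argument through Example 1.9(2): assuming $\hat{D}_2$ exists and is $\hat{J}$-holomorphic, blow up $p_1,\dots,p_4$ and then the three secondary intersection points to form the configuration $D_2$ of seven disjoint symplectic $(-2)$-spheres, blow up an eighth point $p_8$ with $p_2,p_3,p_8$ off any degree-$1$ $\hat{J}$-holomorphic sphere, and apply $R(\gamma)$ for $\gamma=H-E_2-E_3-E_8$; hypothesis (1) of Theorem 1.8 again holds because $E_2,E_3,E_8$ are minimal. Theorem 1.8 then produces $\hat{D}_2'$, whose combinatorial type the remark after Example 1.9 identifies with that of Definition 1.10, and Corollary 1.12 once more yields a contradiction, proving (2).

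I expect the only genuine work to be the bookkeeping in the identification step, namely confirming that the two virtual homological expressions arising from the two distinct reflections $R(H-E_6-E_7-E_8)$ and $R(H-E_2-E_3-E_8)$ are each equivalent to the expression displayed in Definition 1.10, with matching singularity and intersection data down to the order-$2$ tangencies. This is asserted in the remark following Example 1.9 and amounts to a finite check comparing coefficient vectors after a permutation of the $E_i$-classes. Once that combinatorial matching is verified, the conclusion is immediate from Corollary 1.12, and no analytic input beyond Theorem 1.8 is required.
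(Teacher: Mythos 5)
Your proposal is correct and follows exactly the route the paper intends: the paper derives this corollary in one line from Corollary 1.12 together with Example 1.9 and Theorem 1.8, and your write-up simply makes explicit the contradiction argument (existence of $\hat{D}_1$ or $\hat{D}_2$ would, via the blow-ups and the reflection $R(\gamma)$ of Example 1.9 and Theorem 1.8, produce a symplectic arrangement of the combinatorial type of Definition 1.10, contradicting Corollary 1.12). The only detail you elide, consistent with the paper's own treatment in Example 1.9, is the verification of assumptions (a), (b) for the resulting virtual homological expressions, which the paper likewise asserts rather than checks.
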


The fact that a complex line arrangement in $\C\P^2$ which is a Fano plane does not exist follows from a theorem of Hirzebruch in \cite{H}. On the other hand, we have just seen in the proof of Corollary 1.12 that a complex arrangement with the combinatorial type of $\hat{D}_2$ also cannot exist. If one could show that the symplectic arrangements $\hat{D}_1$, $\hat{D}_2$ can be deformed to a complex arrangement in the fashion of Theorem 1.11 (as in case (i)), then Corollary 1.13 would follow from the above two facts immediately. However, it is not clear that 
$\hat{D}_1$, $\hat{D}_2$ can be deformed to a complex arrangement; in fact, there exists a symplectic line arrangement which cannot be deformed to a complex line arrangement (cf. \cite{RuS}). The point we wish to make here is that by applying a Cremona transformation (including its symplectic analog), one could get around of the issue of proving it directly. Finally, we should point out that a line arrangement which is a Fano plane cannot exist even in the topological category, which is a result due to Ruberman and Starkston \cite{RuS}. However, part (2) of
Corollary 1.13 is a new result. The nonexistence of a symplectic Fano plane played a crucial role in 
our work \cite{C} on symplectic Calabi-Yau $4$-manifolds.

\vspace{3mm}

{\bf Acknowledgements:} I am indebted to Cagri Karakurt for useful discussions who once was a partner on the project but later voluntarily withdrew from it. With this said, I am responsible for the entire contents of this paper, including any possible errors or omissions. Part of the work was carried out during my visit at MPIM-Bonn in the summer of 2022. I am grateful for the excellent working environment as well as the hospitality and financial support from the institute.

\section{Homological assignments: finiteness and elimination by specifying areas} 
\subsection{Admissibility and successive blowing down: a new proof}
Let $X=\C\P^2\# N\overline{\C\P^2}$, which is either equipped with a symplectic structure or a complex structure. The canonical line bundle of $X$ is denoted by $K_X$. A basis $H, E_1, E_2,\cdots,E_N$ of $H^2(X)$ is called {\bf standard} if the following holds:
\begin{itemize}
\item $H^2=1$, $E_i^2=-1$ and $H\cdot E_i=0$, $\forall i$, and $E_i\cdot E_j=0$, $\forall i\neq j$.
\item $c_1(K_X)=-3H+E_1+E_2+\cdots+E_N$.
\end{itemize}
A standard basis $H, E_1, E_2,\cdots,E_N$ is {\bf ordered} if we fix the natural order of the $E_i$-classes $E_1, E_2,\cdots,E_N$. 

\begin{example}
In this paper, there are two primary examples of standard bases that will be considered, both of which are naturally ordered. 

(1) Suppose $X$ is a complex surface which is a successive blowing-up of $\C\P^2$. Let $E_i$, 
$1\leq i\leq N$, be the total transform of the exceptional divisor of the $i$-th blowing-up in $X$, and let $H$ be the total transform of a line $L\subset \C\P^2$. Then $H, E_1,E_2,\cdots,E_N$ is a standard basis, which we will call the standard basis associated to the successive blowing-up of $\C\P^2$. We note that the basis 
$H, E_1,E_2,\cdots,E_N$ is naturally ordered. 

(2) Let $\omega$ be a symplectic structure on $X$. Denote by $\E_\omega$ the set of classes $E\in H^2(X)$ such that $E$ can be represented by a smooth $(-1)$-sphere in $X$ and 
$c_1(K_X)\cdot E=-1$. A standard basis $H,E_1,E_2,\cdots,E_N$ is called a 
{\bf reduced basis} of $(X,\omega)$ if in addition, $E_i\in \E_\omega$ for each $i$, and the following area conditions are satisfied: $\omega(E_2)\leq \omega(E_1)$, and when $N\geq 3$,
$\omega(E_N)=\min_{E\in\E_\omega}\omega(E)$, and for any $2<i<N$, 
$\omega(E_i)=\min_{E\in\E_i}\omega(E)$, where for each $i<N$, 
$\E_i:= \{E\in \E_\omega|E\cdot E_j=0, \forall j>i\}$. We note that a reduced basis 
$H,E_1,E_2,\cdots,E_N$ is naturally ordered. 
\end{example}

Let $H,E_1,E_2,\cdots,E_N$ be a standard basis of $H^2(X)$, and let $A\in H^2(X)$. Then 
$A=aH-\sum_{i=1}^N b_i E_i$, where $a,b_i\in\Z$. We will call $a, b_i$ {\bf the $a$-coefficient and
$b_i$-coefficients} of $A$ with respect to the standard basis. Moreover, we define the {\bf virtual genus}
of $A$ to be the following integer
$$
g(A):=\frac{1}{2}(A^2+c_1(K_X)\cdot A)+1.
$$

\begin{definition}
(1) We say a class $A\in H^2(X)$ is {\bf admissible with respect to a standard basis} $H,E_1,E_2,\cdots,E_N$ if the vector formed by the $a$-coefficient and $b_i$-coefficients of $A$ is admissible in the sense of Definition 1.1.

(2) Assume in addition, $H,E_1,E_2,\cdots,E_N$ is ordered. Then an admissible class $A$ is called {\bf positive} with respect to the order of $H,E_1,E_2,\cdots,E_N$ if when the $a$-coefficient of $A$ is less than or equal to $0$,
the $b_i$-coefficients of $A$ satisfy the following condition: for any $b_i\neq 0, b_j\neq 0$, $i<j$ if $b_i<b_j$. 
\end{definition}
We remark that in the case where the $a$-coefficient of $A$ is less than or equal to $0$, i.e., $a\leq 0$, it follows easily that $g(A)=0$ must be true, and moreover, $2a\geq 1+A^2$. In particular, $A^2<0$. 
See \cite{C}, Lemma 3.4, for more details.

\begin{lemma}
In the following situations, the homology class $A$ is admissible with respect to the corresponding standard basis,
which is also positive with respect to the natural order of the standard basis:

{\em (1)} {\em (}The holomorphic case{\em)} $X$ is a successive blowing-up of $\C\P^2$,
$H, E_1,E_2,\cdots,E_N$ is the standard basis associated to the successive blowing-up, 
and $A$ is the class of an irreducible curve $C$ in $X$. In this case, the $a$-coefficient is always non-negative.

{\em (2)}  {\em (}The symplectic  case{\em)} $H, E_1,E_2,\cdots,E_N$ is a reduced basis of $(X,\omega)$ and $A$ is the class of a $J$-holomorphic curve $C$ in $X$, where $J$ is some $\omega$-compatible almost complex structure {\em(}e.g. $C$ is a smoothly embedded symplectic surface in $X${\em)}.
\end{lemma}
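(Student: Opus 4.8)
The plan is to read off the coefficients $a,b_i$ as intersection numbers and then invoke non-negativity of intersections of distinct curves, proving the holomorphic case first and importing its skeleton into the symplectic case. Since a standard basis satisfies $H^2=1$, $H\cdot E_i=0$ and $E_i\cdot E_j=-\delta_{ij}$, writing $A=aH-\sum_{i=1}^N b_iE_i$ gives immediately
$$
a=A\cdot H,\qquad b_i=A\cdot E_i.
$$
Thus both admissibility and positivity are assertions about the signs and relative sizes of the numbers $A\cdot H$ and $A\cdot E_i$, and the whole proof reduces to controlling these.

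For the holomorphic case (1), let $\pi\colon X\to\C\P^2$ be the blowing-down map. The class $H$ is represented by the proper transform of a generic line $L\subset\C\P^2$, which avoids the images in $\C\P^2$ of the blown-up points and is therefore an irreducible curve in class $H$; hence $a=A\cdot H=\deg\pi_\ast[C]\geq 0$, which is the asserted non-negativity of the $a$-coefficient. If $a>0$ then $C$ is not contracted by $\pi$, so $C$ is not a component of any of the effective, $\pi$-exceptional total transforms $E_i$; positivity of intersection of effective divisors with no common component then gives $b_i=C\cdot E_i\geq 0$, which is admissibility (1), and the positivity condition is vacuous. If $a=0$ then $C$ is contracted, so $C$ is the proper transform $\hat e_i$ of the exceptional curve of some blow-up, whose class is $E_i-\sum_{j\succ i}E_j$, the sum running over the indices $j$ of points blown up directly on $e_i$ (infinitely near $p_i$ of order one). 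This yields $b_i=-1$, $b_j=1$ for $j\succ i$ and $b_k=0$ otherwise, which is admissibility (2) with $|a|+1=1$; and since an infinitely near point is blown up later, $j\succ i$ forces $j>i$, placing the unique negative coefficient at the smallest active index and giving positivity.

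For the symplectic case (2), the same template applies once the effective holomorphic divisors $H$ and $E_i$ are replaced by $J$-holomorphic representatives. By the Gromov-invariant results underlying the reduced-basis formalism (cf. \cite{LL} and \cite{C,C1}), for the given $\omega$-compatible $J$ the class $H$ and each $E_i\in\E_\omega$ are represented by $J$-holomorphic subvarieties, and positivity of intersections of $J$-holomorphic curves with no common component replaces the divisorial positivity used above. If $a>0$, then no $b_i$ can be negative: were $C$ a component of the representative of some $E_i$, intersecting the relation $E_i=[C]+(\text{rest})$ with $H$ would give $0=a+(\text{rest})\cdot H\geq a>0$, a contradiction; hence $b_i=A\cdot E_i\geq 0$ and admissibility (1) holds. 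The remaining possibility $a\leq 0$ is the essential one: here positivity of intersections forces $C$ to be a component of the representative of $H$, so $A^2<0$, and by adjunction $C$ is a sphere with $g(A)=0$ and $2a\geq 1+A^2$ (the remark following Definition 1.1 of positivity). Decomposing that representative into its irreducible $J$-holomorphic components and using the area-minimality defining the reduced basis (the $E_i$ minimize $\omega$ within the nested sets $\E_i$) then pins $A$ down to the admissible form of Definition 1.1(2), exactly one coefficient equal to $-(|a|+1)$ and the rest in $\{0,1\}$; and the decreasing-area ordering of the $E_i$ in a reduced basis forces the exceptional class carrying the coefficient $-(|a|+1)$ to occupy the smallest active index, which is precisely the positivity condition.

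The main obstacle is this last, symplectic, $a\leq 0$ case. Everything else is bookkeeping with the intersection form, but when $C$ is a component of a $J$-holomorphic representative of $H$ one must genuinely analyze the decomposition of that representative, rule out the configurations that would violate admissibility, and match the index order to the area order of the reduced basis. This is exactly where the existence of $J$-holomorphic representatives for the non-generic $J$, positivity of intersections, the adjunction bound $2a\geq 1+A^2$, and the minimality characterization of the reduced basis must all be combined; I expect it to be the technical heart of the argument, paralleling \cite{C}, Lemmas 3.3 and 3.4.
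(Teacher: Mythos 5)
Your part (1) is sound and, modulo packaging, coincides with the paper's own treatment of the holomorphic case (the paper runs the same content as an induction on the blow-downs $\pi_N$, identifying the contracted irreducible curves with proper transforms of exceptional curves exactly as you do). The symplectic case is where the proposal breaks down, in two places.

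First, your argument that $a>0$ forces $b_i\geq 0$ is incomplete. When $C$ is a component of a $J$-holomorphic representative of $E_i$, you intersect $E_i=m[C]+(\text{rest})$ with $H$ and conclude $0=ma+(\text{rest})\cdot H\geq ma>0$. This presumes that every irreducible $J$-holomorphic curve pairs non-negatively with $H$, which is false in general: the paper's own proof exhibits, for non-generic $J$ on $\C\P^2\#\overline{\C\P^2}$, an irreducible $J$-holomorphic section $\tilde{C}$ in the class $-mH+(m+1)E_1$ with $\tilde{C}\cdot H=-m<0$. So the components of ``rest'' need not have non-negative $H$-degree and the contradiction does not follow as written. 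Trying to rule out such components by the same positivity argument applied to $H$ leads you in a circle, since those components could in turn sit inside the (possibly reducible, non-reduced) $J$-holomorphic representative of $H$. The paper sidesteps this entirely: it never intersects with representatives of arbitrary $E_i$, but only with the minimal-area exceptional class at each stage, which by \cite{KK} is represented by an \emph{embedded} $J$-holomorphic $(-1)$-sphere for the given $J$ once $N\geq 3$; it then symplectically blows down and inducts.

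Second, the case $a\leq 0$ --- which you correctly identify as the heart of the matter --- is not actually proved; you state the intended conclusion and defer the argument. Moreover the reduction you propose does not get off the ground: ``positivity forces $C$ to be a component of the representative of $H$'' says nothing when $a=0$ (then $A\cdot H=0$ is consistent with $C$ being disjoint from, or transverse to, any representative of $H$), and even for $a<0$ the representative of $H$ for the given non-generic $J$ is a cusp curve whose decomposition must be classified before anything is ``pinned down.'' That classification is precisely where the paper does its real work: after the induction reduces to $N\leq 2$, it blows down to $\s^2\times\s^2$ or $\C\P^2\#\overline{\C\P^2}$ and invokes Lemmas 2.3 and 2.4 of \cite{C2} to describe the $J$-holomorphic foliations in the classes $e_2$ (resp.\ $H-E_1$) together with the negative section, and then checks admissibility case by case according to whether $\hat{C}$ is a fiber, the section, or neither. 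Without an argument of this kind (or the one in \cite{C}, Lemmas 3.3--3.4, which your sketch resembles but does not reproduce), the symplectic case remains open in your write-up.
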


We remark that Lemma 2.3(2) (i.e., the symplectic case) is already known when $A$ is the class of a smoothly embedded symplectic surface (cf. \cite{C}, Lemmas 3.3 and 3.4). The proof given below, which explores (in both situations) the fact that there is a successive blowing-down process associated to the standard basis 
$H, E_1,E_2,\cdots,E_N$, is an alternative proof, independent of the proof in \cite{C}, and is more in line with the
thinking of this paper. 

\begin{proof}
We write $X=X_N$, $C=C_N$, and $A=A_N$.

(1) Let $\pi_N: X_N\rightarrow X_{N-1}$ be the blowing-down of 
the exceptional divisor $E_N$, and let $C_{N-1}$ be the direct image of $C_N$ under $\pi_N$. If $C_{N-1}=0$, then $A_N=E_N$, and we are done in this case. If $C_{N-1}\neq 0$, then 
$C_{N-1}$ is an irreducible curve in $X_{N-1}$. Let $\pi_N^\ast C_{N-1}$ be the total transform of
$C_{N-1}$ in $X_N$. Then $C_N=\pi_N^\ast C_{N-1}-b_N E_N$, where $b_N:=C_N\cdot E_N\geq 0$.
Furthermore, we note that $b_N=0$ if and only if $E_N$ and $C_N$ are disjoint, and $b_N=1$ if and only if $E_N$ and $C_N$ intersect transversely at a single point, and that only in these cases,
$C_{N-1}$ continues to be nonsingular when $C_N$ is nonsingular. If we let $A_{N-1}$ be the class of $C_{N-1}$
in $X_{N-1}$, then $A_N=\pi_N^\ast A_{N-1}-b_N E_N$. The lemma follows easily by induction on $N$. Note that we always have the $a$-coefficient of $A$ non-negative in this case. 

(2) Assume first that $N\geq 3$. We let $J$ be a compatible almost complex structure such that $C_N$ is $J$-holomorphic. Since $N\geq 3$, and since $E_N$ has the minimal symplectic area, $E_N$ can be represented by a $J$-holomorphic $(-1)$-sphere $S_N$ by \cite{KK}. If $C_N=S_N$, then $A_N=E_N$, and we are done in this case. If $C_N\neq S_N$, we set $b_N:=C_N\cdot S_N$. Then $b_N\geq 0$ since both $C_N$ and $S_N$ are $J$-holomorphic. Moreover, as we showed in \cite{C1}, Section 4, we can slightly perturb $S_N$ if necessary, so that it intersects $C_N$ transversely and positively, and we can then symplectically blow down $X_N$ to $X_{N-1}$ along the (perturbed) symplectic $(-1)$-sphere $S_N$, such that $C_N$ descends to a (generally singular) symplectic surface $C_{N-1}$ in $X_{N-1}$, where $C_{N-1}$ is smoothly embedded if and only if $b_N\leq 1$ and $C_N$ is smoothly embedded. Furthermore, $H, E_1,E_2, \cdots, E_{N-1}$ descends to a reduced basis of $X_{N-1}$ (cf. \cite{C1}, Lemma 4.2), and $C_{N-1}$ can be made $J$-holomorphic for some compatible $J$ on $X_{N-1}$. 
If we let $A_{N-1}$ be the class of $C_{N-1}$ in $X_{N-1}$, then $A_N=A_{N-1}-b_N E_N$.
With this understood, if $N-1\geq 3$, we can continue this process and run an induction on $N$. 

Hence it remains to consider the case where $N\leq 2$. Assume $N=2$ first. In this case, there are
three $(-1)$-classes $E_1,E_2, H-E_1-E_2$ which can be represented by symplectic spheres, and 
there are two possibilities which we shall discuss separately. 

First, consider the case where the class $H-E_1-E_2$ has the minimal symplectic area. We fix a $J$ such that $C_N$ is $J$-holomorphic. Then by \cite{KK}, $H-E_1-E_2$ can be represented by a $J$-holomorphic $(-1)$-sphere $S_N$. If $C_N=S_N$, then $A_N=H-E_1-E_2$ and we are done. Suppose $C_N\neq S_N$. Setting $b_N:=C_N\cdot S_N\geq 0$, we slightly perturb $S_N$ so that it intersects $C_N$
transversely and positively, then we symplectically blow down $X_N$ to $\hat{X}$ along the 
perturbed $(-1)$-sphere $S_N$, where $C_N$ descends to $\hat{C}$ in $\hat{X}$. We point out 
that $\hat{X}=\s^2\times \s^2$, and $\hat{C}$ is $\hat{J}$-holomorphic with respect to some compatible almost complex structure $\hat{J}$ on $\hat{X}$. 

With this understood, let $e_1, e_2\in H^2(\hat{X})$ be the descendant of $E_1,E_2$ respectively. 
(Correspondingly, $H-E_2$, $H-E_1$ are the total transform of $e_1$, $e_2$ in $X_N$ respectively.)
Then $e_1,e_2$ form a basis of $H^2(\hat{X})$, such that $e_1\cdot e_2=1$, and $e_1\cdot e_1=e_2\cdot e_2=0$. Furthermore, $c_1(K_{\hat{X}})=-2e_1-2e_2$. Finally, since the area of $E_1$ is greater than or equal to the area of $E_2$, we note that $e_2$ has the minimal area among 
$e_1, e_2$.
Now we apply Lemma 2.4 of \cite{C2} to the classes $e_1$ and $e_2$. It follows easily that $e_2$ is represented by a $\hat{J}$-holomorphic sphere $\hat{S}_2$. In fact, $\hat{X}$ is foliated by a $\s^2$-family of such $\hat{J}$-holomorphic spheres which contains $\hat{S}_2$. 
Moreover, there is a $\hat{J}$-holomorphic sphere 
$\hat{S}_1$, such that $e_1$ is represented by $\hat{S}_1+m \hat{S}_2$ for some $m\geq 0$. 
Note that $\hat{S}_1^2=-2m<0$ if $m\neq 0$. 

With the preceding understood, we next examine the possible scenarios of $\hat{C}$ in $\hat{X}$. 
First, if $\hat{C}$ is one of the $\hat{J}$-holomorphic spheres representing $e_2$, then 
$b_N=C_N\cdot S_N$ must be equal to $0$ or $1$ as $\hat{C}$ is smoothly embedded. In this case, 
$$
A_N=H-E_1-b_N(H-E_1-E_2)=(1-b_N)H-(1-b_N)E_1 +b_NE_2,
$$
so we are done in this case. Secondly, suppose $\hat{C}=\hat{S}_1$. Then $b_N=0$ or $1$ as well, as $\hat{C}$ is smoothly embedded. Note that
the total transform of $\hat{S}_1$ in $X_N$ is $H-E_2-m(H-E_1)$, so that in this case, we have
$$
A_N=H-E_2-m(H-E_1)-b_N(H-E_1-E_2)= (-m+1-b_N) H+(m+b_N) E_1-(1-b_N)E_2.
$$
We are done in this case as well, as $b_N=0$ or $1$. 
Finally, we consider the case $\hat{C}\neq \hat{S}_1$ and $\hat{C}$ is not one of the $\hat{J}$-holomorphic spheres representing $e_2$. Then $\hat{C}\cdot \hat{S}_2>0$ and 
$\hat{C}\cdot \hat{S}_1\geq 0$. In this case, we need to recall the fact that $\hat{C}$ contains 
a point $p$ such that in a small neighborhood $U$ of $p$, $\hat{C}\cap U$ consists of $b_N$ many
embedded disks intersecting transversely at $p$ (cf. \cite{C1}, Section 4). With this understood, 
since $\hat{X}$ is foliated 
by $\hat{J}$-holomorphic spheres representing $e_2$, it follows easily that $\hat{C}\cdot e_2\geq b_N$.
Now if we write $\hat{C}=ue_1+ve_2$, then $u=\hat{C}\cdot e_2\geq b_N$ and 
$v=\hat{C}\cdot e_1=\hat{C}\cdot \hat{S}_1+m \hat{C}\cdot e_2\geq mb_N$. 
It follows easily that if $m>0$, the class $A$ is admissible with respect to $H,E_1,E_2$, as 
$$
A_N=u(H-E_2)+v(H-E_1)-b_N(H-E_1-E_2)=(u+v-b_N)H-(v-b_N)E_1-(u-b_N)E_2.
$$
Now let $m=0$. Then $\hat{S}_1^2=0$, so that $\hat{X}$ is also foliated by $\hat{J}$-holomorphic spheres representing $e_1$. If $\hat{C}$ is one of the $\hat{J}$-holomorphic spheres representing $e_1$, we have 
$$
A_N=H-E_2-b_N(H-E_1-E_2)=(1-b_N)H+b_NE_1-(1-b_N)E_2,
$$
where $b_N=0$ or $1$, and we are done. Otherwise, we have $\hat{C}\cdot e_1\geq b_N$ instead. In this case, we have $v\geq b_N$ as well, and the lemma also follows. This finishes the discussion when $H-E_1-E_2$ has the minimal area.

Next, we consider the remaining case for $N=2$, where $E_2$ has the minimal area among the three classes $E_1,E_2, H-E_1-E_2$. In this case we can represent $E_2$ by a $J$-holomorphic 
$(-1)$-sphere $S_N$. If $C_N=S_N$, we have $A_N=E_2$ and we are done. If $C_N\neq S_N$, we set
$b_N:=C_N\cdot S_N\geq 0$. We symplectically blow down $X_N$ to 
$X_1=\C\P^2\# \overline{\C\P^2}$ along $S_N$, where $C_N$ descends to a $J_1$-holomorphic curve $C_1$ in $X_1$ for some compatible $J_1$, and the proof is reduced to the case of $N=1$. 

Let $A_1$ be the class of $C_1$. Then we apply Lemma 2.3 of \cite{C2} to conclude that either 
$E_1$ is represented by a $J_1$-holomorphic $(-1)$-sphere, or $X_1$ is foliated by $J_1$-holomorphic spheres $S$ representing the class $H-E_1$, together with a $J_1$-holomorphic section $\tilde{C}$ such that $E_1=\tilde{C}+m S$ for some $m>0$. (Note that, for $N\geq 2$, $E_N$ can always be represented by a $J$-holomorphic 
$(-1)$-sphere for any given $J$ as long as $E_N$ has minimal area, on the contrary, for $N=1$,
$E_N$ cannot always be represented by a $J$-holomorphic $(-1)$-sphere if $J$ is not chosen generic.) In the former case, it follows easily that $A_1$ is admissible with respect to $H,E_1$, from which it follows easily that 
$A_N$ is admissible with respect to $H,E_1, E_2$. In the latter case, there are several possibilities. If $C_1$ is one of the $J_1$-holomorphic spheres $S$ representing the class $H-E_1$, then $A_1=H-E_1$, so that $A_N=H-E_1-b_NE_2$ where $b_N=0$ or $1$ as $C_1$ is embedded. If $C_1=\tilde{C}$, then 
$A_1=E_1-m(H-E_1)=-mH+(m+1)E_1$, and $A_N=-mH+(m+1)E_1-b_NE_2$, where $b_N=0$ or $1$ as $C_1=\tilde{C}$ is embedded. 
Finally, if none of the above is true, we have $C_1\cdot S>0$ and $C_1\cdot \tilde{C}\geq 0$. If we write $A_1=aH-b_1E_1$. Then $b_1=C_1\cdot E_1=C_1\cdot (\tilde{C}+mS)>0$, and $a>0$ because $H$, $E_1$ and $C_1$ all have positive areas. It follows easily that, in this case, $A_N$ is also admissible with respect to $H,E_1, E_2$. This concludes the discussion for $N=2$, where in the process the case of $N=1$ is also proved. The case $N=0$ is trivial, so the proof of the lemma is completed. 

\end{proof}

\subsection{Bounding the number of admissible classes}
Note that by the adjunction inequality, the virtual genus of the class of a pseudoholomorphic curve
is always non-negative. On the other hand, in light of Lemma 2.3, we shall be mainly concerned with admissible homology classes. With this understood, the following lemma establishes a fundamental finiteness condition. 

\begin{lemma}
Fix any standard basis $H,E_1,E_2,\cdots,E_N$. For any $\alpha\in\Z$ and any constant $C>0$,
the number of classes $A$ admissible with respect to $H,E_1,E_2,\cdots,E_N$, such that
$A^2=-\alpha$, $g(A)\geq 0$, with the $a$-coefficients of $A$ bounded from above by $C$, is finite.
More precisely, when the $a$-coefficient of $A$ is positive, the $b_i$-coefficients of $A$ are also bounded from above by $C$, and when it's non-positive, the $a$-coefficient of $A$ is bounded from below
by $\frac{1}{2}(1-\alpha)$, and the $b_i$-coefficients of $A$ by $-\frac{1}{2}(1+\alpha)$. 
\end{lemma}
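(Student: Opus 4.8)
The plan is to split according to the admissibility dichotomy of Definition 1.1, treating the cases $a>0$ and $a\le 0$ separately, and in each to read off explicit coordinate bounds directly from the two numerical constraints in force. First I would translate both hypotheses into scalar relations for $A=aH-\sum_{i=1}^N b_iE_i$: the self-intersection $A^2=-\alpha$ becomes $\sum_{i=1}^N b_i^2=a^2+\alpha$, while the identity $c_1(K_X)\cdot A=-3a+\sum_{i=1}^N b_i$ turns the hypothesis $g(A)\ge 0$ into the linear inequality $\sum_{i=1}^N b_i\ge 3a+\alpha-2$. These two relations are the only inputs I would use.

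In the case $a>0$, admissibility forces $b_i\ge 0$ for all $i$, and the key move is to combine the genus inequality with the self-intersection identity. This produces $\sum_{i=1}^N b_i(b_i-1)=\sum b_i^2-\sum b_i\le (a^2+\alpha)-(3a+\alpha-2)=(a-1)(a-2)$. Since every summand $b_i(b_i-1)$ is non-negative, each individual term obeys $b_j(b_j-1)\le (a-1)(a-2)$; because $t\mapsto t(t-1)$ is increasing on the non-negative integers and $b_j\ge a+1$ would already give $b_j(b_j-1)\ge a(a+1)>(a-1)(a-2)$ for $a\ge 1$, I would conclude $0\le b_j\le a\le C$. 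Together with $1\le a\le C$ this confines all coordinates to a finite range.

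In the case $a\le 0$, I would not need the genus inequality at all: admissibility already dictates that exactly one coefficient equals $-(|a|+1)$ and the rest lie in $\{0,1\}$. Substituting this shape into $\sum b_i^2=a^2+\alpha$ gives $\alpha=2|a|+1+s$, where $s\ge 0$ counts the coefficients equal to $1$. Since $s\ge 0$, this immediately yields $|a|\le\tfrac12(\alpha-1)$, equivalently $a\ge\tfrac12(1-\alpha)$, and hence every coefficient satisfies $b_i\ge-(|a|+1)\ge-\tfrac12(1+\alpha)$. The coordinates are thereby trapped in a bounded integer box, so only finitely many classes survive, and the two cases together give the stated finiteness along with the explicit bounds.

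The step I expect to be the crux is the sharp upper bound $b_i\le C$ in the case $a>0$: the self-intersection identity in isolation yields only $b_i\le\sqrt{a^2+\alpha}$, which can genuinely exceed $C$, so finiteness would follow but the advertised bound would not. It is precisely the genus inequality, fed through the non-negative quantity $\sum_i b_i(b_i-1)$, that sharpens this to $b_i\le a$. Everything else---the translation of the hypotheses into scalar relations and the substitution in the case $a\le 0$---is routine bookkeeping.
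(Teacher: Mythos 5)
Your proposal is correct and follows essentially the same route as the paper: both arguments derive the identity $\sum_{i=1}^N b_i(b_i-1)+2g(A)=(a-1)(a-2)$ from the two numerical constraints, use termwise non-negativity to bound each $b_i$ by $a$ when $a>0$, and in the case $a\leq 0$ feed the admissible shape of the coefficient vector into $\sum_i b_i^2=a^2+\alpha$ to get $|a|\leq \tfrac{1}{2}(\alpha-1)$ and $|b_i|\leq |a|+1$. Your write-up merely spells out the substitution in the $a\leq 0$ case that the paper leaves implicit.
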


\begin{proof}
Let $A=aH-\sum_{i=1}^N b_iE_i$. Then we have 
$$
a^2-\sum_{i=1}^N b_{i}^2=-\alpha,\;\; g(A)=\frac{1}{2}(-\alpha-3a+\sum_{i=1}^N b_{i})+1.
$$
It follows easily that $\sum_{i=1}^N b_i(b_i-1) +2g(A)=(a-1)(a-2)$. Note that $b_i(b_i-1)\geq 0$ 
for each $i$. On the other hand, $g(A)\geq 0$ by assumption. It follows easily that for each $i$, $b_i(b_i-1) \leq (a-1)(a-2)$. Now suppose $a\leq C$. If $a>0$, then it is easy to see that for each 
$i$, $0\leq b_i\leq C$. If $a\leq 0$, then $A$ being admissible implies that 
$|a|\leq \frac{1}{2}(\alpha-1)$, and $|b_i|\leq |a|+1$ for each $i$ (see \cite{C}, Lemma 3.4, for more details). It is clear that there are only finitely many such classes $A$.

\end{proof}

It turns out that an upper bound for the $a$-coefficients can be established when $N$ is relatively small. More precisely, we have the following

\begin{proposition}
Assume $N\leq 8$, and fix any standard basis $H,E_1,E_2,\cdots,E_N$. Then for any integers 
$\alpha$ and $g\geq 0$, there exists a constant $C:=C(\alpha,g,N)>0$ depending on $\alpha, g$ and $N$ alone, 
such that for any $A\in H^2(X)$ with $A^2=-\alpha$, $g(A)=g$, the $a$-coefficient of $A$ is bounded from above by $C$. As a consequence, the number of classes $A$ admissible with respect to $H,E_1,E_2,\cdots,E_N$ such that $A^2=-\alpha$, $g(A)=g$ is finite. Moreover, if $N=9$ but $\alpha+2g-2>0$, then the conclusion continues to hold. 

\end{proposition}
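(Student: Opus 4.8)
The plan is to reduce everything to a single Cauchy--Schwarz estimate, using only the two numerical hypotheses $A^2=-\alpha$ and $g(A)=g$. Writing $A=aH-\sum_{i=1}^N b_iE_i$ with respect to the fixed standard basis, the self-intersection condition gives $a^2-\sum_{i=1}^N b_i^2=-\alpha$, i.e. $\sum_i b_i^2=a^2+\alpha$. For the genus, I would compute $c_1(K_X)\cdot A=(-3H+\sum_i E_i)\cdot(aH-\sum_j b_jE_j)=-3a+\sum_i b_i$, so that $g=g(A)=\tfrac12(A^2+c_1(K_X)\cdot A)+1$ rearranges to $\sum_i b_i=3a+(2g-2+\alpha)$. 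Setting the constant $S:=2g-2+\alpha$, I then have the two identities $\sum_i b_i=3a+S$ and $\sum_i b_i^2=a^2+\alpha$, valid for \emph{any} class $A$ with the prescribed $A^2$ and $g(A)$ (no admissibility is needed at this stage).

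Next I would feed these into Cauchy--Schwarz in the form $(\sum_i b_i)^2\le N\sum_i b_i^2$, which yields
$$
(3a+S)^2\le N(a^2+\alpha).
$$
Expanding and collecting terms gives the quadratic inequality
$$
(9-N)\,a^2+6S\,a+(S^2-N\alpha)\le 0.
$$
Since $N\le 8$, the leading coefficient $9-N\ge 1$ is strictly positive, so this is an upward-opening parabola in $a$; hence every admissible value of $a$ lies between its two real roots, and in particular $a$ is bounded above by the larger root. Solving explicitly (and simplifying the discriminant, which reduces to $4N\bigl(S^2+(9-N)\alpha\bigr)$) produces the bound
$$
a\le \frac{-3S+\sqrt{N\bigl(S^2+(9-N)\alpha\bigr)}}{9-N}=:C(\alpha,g,N),
$$
a constant depending only on $\alpha$, $g$ and $N$ through $S=2g-2+\alpha$. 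If the discriminant were negative, no such class $A$ would exist and the bound holds vacuously, so one may simply take $C$ to be this value (or its nonnegative part).

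For the finiteness consequence I would then invoke Lemma 2.4 directly: the hypotheses there are precisely that $A$ be admissible with $A^2=-\alpha$, $g(A)\ge 0$, and $a$-coefficient bounded above by a constant $C$. Here $g(A)=g\ge 0$ by assumption and the $a$-coefficient is bounded above by the $C$ just produced, so Lemma 2.4 gives the desired finiteness of admissible classes. I do not expect a genuine obstacle in this argument; the only delicate point is recognizing \emph{where} the hypothesis $N\le 8$ enters. It is exactly the positivity of $9-N$, equivalently $(-K_X)^2=9-N>0$, that converts the Cauchy--Schwarz inequality into a two-sided (hence upper) bound on $a$. For $N=9$ the coefficient of $a^2$ vanishes and the inequality degenerates to a linear constraint $6Sa\le 9\alpha-S^2$, which only bounds $a$ on one side; this is the reason the result, and the method, are confined to the del Pezzo range $N\le 8$, and it is worth flagging this explicitly as the structural heart of the proof.
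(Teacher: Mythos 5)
Your argument is correct, and it is a genuinely cleaner route than the one the paper takes. Both proofs ultimately rest on the same two numerical identities $\sum_i b_i=3a+(\alpha+2g-2)$ and $\sum_i b_i^2=a^2+\alpha$ combined with Cauchy--Schwarz, but the paper applies the inequality over only the $M$ nonzero $b_i$'s and then must (i) split into cases according to the sign of $\alpha+2g-2$ and of $\alpha$, choosing an auxiliary $\epsilon$ in each case (Lemma 2.7), and (ii) invoke a separate reflection argument (Lemma 2.6, extending Lemma 3.5 of \cite{C}) to control $M$; the results are then assembled in Corollary 2.8. You instead apply Cauchy--Schwarz over all $N$ indices at once, which packages everything into the single quadratic inequality $(9-N)a^2+6Sa+(S^2-N\alpha)\le 0$ and makes the role of $N\le 8$ completely transparent: the leading coefficient is $(-K_X)^2=9-N>0$, so the parabola opens upward and $a$ is trapped between its roots, while at $N=9$ the inequality degenerates and only a one-sided linear constraint survives (consistent with the paper's counterexample $A_t$ after Proposition 2.5, for which $S=0$ and the inequality is satisfied for all $t$). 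What you give up is sharpness of the constant in some regimes --- e.g.\ for $\alpha=2$, $g=0$, $N=8$ your bound is $a\le 4$ whereas Corollary 2.8(2) gives $a\le 3$, the improvement coming precisely from Lemma 2.6's lower bound on $M$ --- but those sharper constants are used elsewhere in the paper (Theorem 1.6 and the choice of $\underline{C}$), not for Proposition 2.5 itself, for which any finite $C(\alpha,g,N)$ suffices. Your closing appeal to Lemma 2.4 for the finiteness statement is exactly how the paper concludes as well; the only cosmetic point is that your explicit $C$ should be replaced by $\max(C,1)$, say, to guarantee the stated positivity, and interpreted as ``no such class exists'' when the discriminant $4N\bigl(S^2+(9-N)\alpha\bigr)$ is negative, as you already note.
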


Note that Theorem 1.3 follows immediately from Lemma 2.3 and Proposition 2.5. 

We remark that Proposition 2.5 is not true if $N>9$ or $N=9$ but $\alpha+2g-2\leq 0$. For example, for any integer $t\geq 0$, the class
$$
A_t:=(3t+1)H-(t+1)E_1-(t+1)E_2-(t+1)E_3-tE_4-\cdots-tE_9
$$
is admissible, with $A_t^2=-2$, $g(A_t)=0$, but the $a$-coefficient of $A_t$ is unbounded. 

Proposition 2.5 will follow from the following two lemmas. 

\begin{lemma}
Fix any standard basis $H,E_1,E_2,\cdots,E_N$. Let $A=aH-\sum_{i=1}^N b_iE_i$ 
be any class (not necessarily admissible) such that $A^2=-\alpha$, $g(A)=g\geq 0$. Let $M$ be the number of non-zero $b_i$-coefficients of $A$. Moreover, set
$$
\delta:=\max \{0, \; 1-(\alpha+2g-2)\}. 
$$
Suppose $\alpha+2g-2\geq -2$. Then $M\geq 10-\delta$ if $a>3$.
\end{lemma}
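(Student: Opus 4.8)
The plan is to reduce the statement to two elementary constraints on the nonzero $b_i$-coefficients and then to play a Cauchy--Schwarz bound against an integrality bound. First I would record the two defining equations. From $A^2=-\alpha$ we get $\sum_{i=1}^N b_i^2=a^2+\alpha$, and since $c_1(K_X)\cdot A=-3a+\sum_i b_i$, the condition $g(A)=g$ becomes $\sum_{i=1}^N b_i=3a+k$, where I abbreviate $k:=\alpha+2g-2$ (so the hypothesis reads $k\ge -2$ and $\delta=\max\{0,1-k\}$). Discarding the zero coefficients, I am left with $M$ nonzero integers whose sum is $s:=3a+k$ and whose sum of squares is $q:=a^2+\alpha$; here $q\ge 1$ since $s=3a+k\ge 10>0$ forces some $b_i\ne 0$. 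Because $g\ge 0$ I also have $q=a^2+k-2g+2\le a^2+k+2$, which eliminates $g$ and lets me work with the two parameters $a,k$ alone. I would also note at the outset that $10-\delta$ equals $10$ when $k\ge 1$ and equals $9+k$ when $k\le 0$, and that in all cases $9+k\ge 10-\delta$.

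Next I would isolate the two inequalities that drive the argument. The first is Cauchy--Schwarz applied to the $M$ nonzero terms: $Mq\ge s^2$, i.e. $M\ge s^2/q$. The second is an integrality inequality: every nonzero integer $b$ satisfies $(b-1)(b-2)\ge 0$, with equality exactly when $b\in\{1,2\}$; summing over the nonzero coefficients gives $q-3s+2M\ge 0$, i.e. $2M\ge 3s-q$. The virtue of the second inequality is that it is sharp precisely for the extremal configurations, those with all $b_i\in\{1,2\}$, which is exactly the regime where Cauchy--Schwarz falls short by one or two.

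Then I would split on the size of $a$. For $a\in\{4,5\}$ I use the integrality bound together with $q\le a^2+k+2$, obtaining $2M\ge 3s-q\ge -a^2+9a+2k-2$; at both $a=4$ and $a=5$ this right-hand side equals $18+2k$, so $M\ge 9+k\ge 10-\delta$. For $a\ge 6$ I use Cauchy--Schwarz in the form $M\ge s^2/q\ge (3a+k)^2/(a^2+k+2)$, and it remains to verify the explicit quadratic inequality $(3a+k)^2>(9-\delta)(a^2+k+2)$; this reads $(3a+k)^2>9(a^2+k+2)$ when $k\ge 1$ and $(3a+k)^2>(8+k)(a^2+k+2)$ when $-2\le k\le 0$. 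Each is a one-line verification: the relevant quadratic in $a$ has positive derivative and positive value at $a=6$, hence stays positive for $a\ge 6$. Since $M$ is an integer, the strict inequality upgrades $M>(10-\delta)-1$ to $M\ge 10-\delta$, finishing the proof.

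The main obstacle is the boundary regime $a\in\{4,5\}$: there Cauchy--Schwarz alone misses the integer target by one (for instance $s=13,\ q=19$ only yields $M\ge 8.9$, whereas $M\ge 10$ is required), so the essential ingredient is the integrality inequality $(b-1)(b-2)\ge 0$ and the observation that its sum is exact on $\{1,2\}$-valued configurations. The remaining delicate point is purely bookkeeping, namely confirming that the two bounds jointly cover every $a\ge 4$ and every $k\ge -2$; the case split above is arranged precisely so that the two boundary values of $a$ are handled by the integrality bound and the tail $a\ge 6$ by Cauchy--Schwarz, with no gap in between.
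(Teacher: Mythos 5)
Your proof is correct, and it takes a genuinely different route from the paper's. The paper proves this lemma by contradiction as an extension of Lemma 3.5 of \cite{C}: assuming $M\leq 9-\delta$, it first shows $b_i\leq a-1$ for positive $b_i$ and that at least three $b_i$ are positive, deduces that some triple satisfies $b_i+b_j+b_k>a$, and then applies the Cremona reflection $R_{ijk}$ to descend to a class with $\tilde{a}=2$ or $3$, ruling out each terminal case (including the new $g=1$ case with $\tilde{A}=3H-E_{j_1}-\cdots-E_{j_{9+\alpha}}$) by inspection. Your argument instead combines the Cauchy--Schwarz bound $Mq\geq s^2$ with the integrality bound $q-3s+2M\geq 0$ coming from $\sum(b_i-1)(b_i-2)\geq 0$ over the nonzero coefficients, splitting at $a\in\{4,5\}$ (where integrality gives exactly $M\geq 9+k$) versus $a\geq 6$ (where Cauchy--Schwarz gives $M>9-\delta$); I checked the boundary computations, e.g.\ at $a=6$, $k=-2$ one gets $256/36>6$, and the asymptotics as $a\to\infty$, and they all close. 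What each approach buys: yours is elementary, self-contained, and does not rely on the reflection machinery or on the cited Lemma 3.5 of \cite{C}, and it makes transparent why the bound is sharp (the extremal classes have all $b_i\in\{1,2\}$, e.g.\ $4H$ minus six $E_i$'s and three $2E_i$'s realizes $M=9$ for $\delta=1$); the paper's descent, while longer, stays within the Cremona-reflection framework that the rest of the paper is built on and reuses structural facts about classes with small $a$-coefficient that appear elsewhere.
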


This is an extension of Lemma 3.5 in \cite{C}, with the same proof strategy.

\begin{proof}
Suppose to the contrary, $M\leq 9-\delta$ where $a>3$.

We first note that the following {\bf Claim} in the proof of Lemma 3.5 of \cite{C} continues to hold, i.e., 

\vspace{1mm}

{\bf Claim:} {\it There are distinct indices $i,j,k$ such that (i) $b_i,b_j,b_k$ are positive, and (ii) $a-(b_i+b_j+b_k)<0$.
}

\vspace{1mm}

To see this, note that $(a-1)(a-2)=\sum_{i=1}^N b_i(b_i-1)+2g$. Since $g\geq 0$, it follows that, as in the proof of
Lemma 3.5 of \cite{C}, we have $b_i\leq a-1$ for any $b_i>0$ (note that we assumed $a>3$). Consequently, if there are at most two
positive $b_i$'s, then $\sum_{i=1}^N b_i\leq 2(a-1)$, which implies $-3a+\sum_{i=1}^N b_i\leq -a-2\leq -6$. But $-3a+\sum_{i=1}^N b_i=\alpha+2g-2\geq -2$, which is a contradiction. Hence there are at least three distinct 
$b_i$'s which are positive.

Next, we observe that the condition $\alpha+2g-2\geq -2$ implies that $\delta$ only takes values 
$0,1,2,3$. With this understood, if for any distinct $i,j,k$, $b_i+b_j+b_k\leq a$, then it follows easily, 
with $M\leq 9-\delta$ and observing that there are at least $\delta$ many positive $b_i$'s, that
$$
\alpha+2g-2=-3a+\sum_{i=1}^N b_i\leq -3a+3a-\delta=-\delta.
$$
But $\delta:=\max \{0, \; 1-(\alpha+2g-2)\}$, which is a contradiction. Hence there must be distinct $i,j,k$, such that $b_i+b_j+b_k> a$. It is clear that we may assume that $b_i,b_j,b_k$ are positive. Hence the {\bf Claim}. 

The argument in the proof of Lemma 3.5 of \cite{C} continues to hold. (We will use the same notations from 
Lemma 3.5 of \cite{C} here.) In particular, with $a>3$, we have $\tilde{a}=2$ or $3$. We need to examine the class 
$\tilde{A}=R_{ijk}(A)$ according to the value of $\tilde{a}$, as we did in \cite{C}. 

Suppose $\tilde{a}=2$. Note that $(\tilde{a}-1)(\tilde{a}-2)\geq 2g$, which implies that $g=0$ must be true. With this understood, the proof proceeds in the same way as in \cite{C}. 

Suppose $\tilde{a}=3$. Then $(\tilde{a}-1)(\tilde{a}-2)\geq 2g$ implies $g=0$ or $1$. If $g=0$, the lemma follows as in \cite{C}. The new case occurs when $g=1$, where $\tilde{b}_i=0$ or $1$, which is easily seen from the identity $(\tilde{a}-1)(\tilde{a}-2)=2g+\sum_{i=1}^N \tilde{b}_i(\tilde{b}_i-1)$. It follows that
$$
\tilde{A}=3H-E_{j_1}-E_{j_2}-\cdots-E_{j_{9+\alpha}}.
$$
There are $9+\alpha$ many non-zero $b_i$-coefficients in the expression of $\tilde{A}$, contradicting the assumption that $M\leq 9-\delta$, because $9+\alpha>9-\delta$ as $g=1$. This finishes the proof. 

\end{proof}

\begin{lemma}
Fix any standard basis $H,E_1,E_2,\cdots,E_N$, and assume $N\leq 8$. Let 
$A=aH-\sum_{i=1}^N b_i E_i$ be any class (not necessarily admissible) with $A^2=-\alpha$, $g(A)=g\geq 0$. Assume $a>0$. 
\begin{itemize}
\item [{(1)}] If $\alpha+2g-2\geq 0$, then $\alpha>0$ must be true. Moreover, $a\leq \sqrt{8\alpha}$. 
\item [{(2)}] If $\alpha\leq 0$, then $a\leq 6|\alpha+2g-2|$ if $N=8$, $a\leq 3|\alpha+2g-2|$ if $N=7$,
and  $a\leq 2|\alpha+2g-2|$ if $N\leq 6$.
\item [{(3)}] If $\alpha>0$ and $\alpha+2g-2<0$, then $a\leq 7$. 
\end{itemize}
\end{lemma}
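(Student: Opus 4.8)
The plan is to reduce all three parts to a single application of the Cauchy--Schwarz inequality to the $b_i$-coefficients, after translating the two hypotheses $A^2=-\alpha$ and $g(A)=g$ into algebraic identities in $a$ and the $b_i$. Writing $A=aH-\sum_{i=1}^N b_iE_i$ and using $H^2=1$, $E_i^2=-1$, $E_i\cdot E_j=0$, and $c_1(K_X)=-3H+\sum_i E_i$, the condition $A^2=-\alpha$ becomes
$$
\sum_{i=1}^N b_i^2=a^2+\alpha,
$$
while $g(A)=\tfrac12(A^2+c_1(K_X)\cdot A)+1=g$, together with $c_1(K_X)\cdot A=-3a+\sum_i b_i$, becomes
$$
\sum_{i=1}^N b_i=3a+\beta,\qquad\text{where }\beta:=\alpha+2g-2.
$$
Cauchy--Schwarz over the $N$ summands then yields the master inequality
\begin{equation*}
(3a+\beta)^2=\Big(\sum_{i=1}^N b_i\Big)^2\le N\sum_{i=1}^N b_i^2=N(a^2+\alpha).\tag{$\star$}
\end{equation*}
I expect every part to fall out of $(\star)$ together with elementary sign analysis, noting throughout that $9-N\ge 1>0$ because $N\le 8$.

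For part (1), with $\beta\ge 0$ and $a>0$ I would bound $(3a+\beta)^2\ge 9a^2$ in $(\star)$ to get $(9-N)a^2\le N\alpha$. Since $9-N\ge 1$ and $a>0$, the left-hand side is strictly positive, forcing $N\alpha>0$ and hence $\alpha>0$. Dividing gives $a^2\le\frac{N}{9-N}\alpha$, and since $\frac{N}{9-N}$ is increasing in $N$ with value $8$ at $N=8$, we obtain $a^2\le 8\alpha$, i.e. $a\le\sqrt{8\alpha}$.

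For part (2), with $\alpha\le 0$ I would first observe $\sum_i b_i^2=a^2+\alpha\le a^2$, so $(\star)$ tightens to $(3a+\beta)^2\le Na^2$; taking square roots (legitimate as $a>0$) gives $3a+\beta\le\sqrt N\,a$, that is $(3-\sqrt N)a\le-\beta$. As $3-\sqrt N>0$, this simultaneously shows $\beta<0$ (so $-\beta=|\beta|=|\alpha+2g-2|$) and gives $a\le|\beta|/(3-\sqrt N)$. Evaluating the constant then finishes: $1/(3-\sqrt 8)=3+2\sqrt2<6$, $1/(3-\sqrt 7)=(3+\sqrt7)/2<3$, and for $N\le 6$ monotonicity gives $1/(3-\sqrt N)\le 1/(3-\sqrt6)=(3+\sqrt6)/3<2$, which are exactly the three stated bounds.

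For part (3), the decisive observation is that $\alpha>0$ and $\beta=\alpha+2g-2<0$ force $\alpha+2g\le 1$; since $\alpha\ge 1$ and $g\ge 0$ this pins down $\alpha=1$, $g=0$, hence $\beta=-1$. Then $(\star)$ with $N\le 8$ reads $(3a-1)^2\le 8(a^2+1)$, i.e. $a^2-6a-7\le 0$, i.e. $(a-7)(a+1)\le 0$, so $a\le 7$ as $a>0$. Finally, the regimes $\beta\ge 0$, then $\beta<0$ with $\alpha\le 0$, then $\beta<0$ with $\alpha>0$ are exhaustive, so the three parts cover all cases. I do not anticipate a genuinely hard step here; the only real care needed is orienting the Cauchy--Schwarz estimate in the correct direction within each sign regime and the routine verification that the constants $1/(3-\sqrt N)$ lie below $6,3,2$ for $N=8,7,\le 6$.
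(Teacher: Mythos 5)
Your proof is correct and follows essentially the same route as the paper: both translate $A^2=-\alpha$ and $g(A)=g$ into $\sum_i b_i=3a+(\alpha+2g-2)$ and $\sum_i b_i^2=a^2+\alpha$ and then apply Cauchy--Schwarz, using $N\leq 8$ (the paper sums over the $M\leq N$ nonzero coefficients, which makes no difference to the final constants). The only cosmetic divergence is in parts (2) and (3), where the paper runs an $\epsilon$-parameter contradiction with $\epsilon=1/6,1/3,1/2,1/7$ while you solve the resulting quadratic inequalities directly, obtaining the marginally sharper constants $1/(3-\sqrt{N})$ that imply the stated bounds.
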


\begin{proof}
Note that $A^2=-\alpha$ and $g(A)=g$ give rise to
$$
\sum_{i=1}^N b_i=3a+(\alpha+2g-2), \;\; \sum_{i=1}^N b_i^2=a^2+\alpha.
$$
Let $M$ be the number of non-zero $b_i$'s. 

(1) Assume $\alpha+2g-2\geq 0$. Then 
$$
\frac{3a}{M}\leq \frac{1}{M} \sum_{i=1}^N b_i\leq (\frac{1}{M} \sum_{i=1}^N b_i^2)^{1/2}=(\frac{a^2+\alpha}{M})^{1/2}.
$$
Since $a>0$, it follows easily that $9a^2\leq M(a^2+\alpha)$. With $M\leq N\leq 8$, we must have $\alpha>0$. Moreover,
$$
a\leq (\frac{M\alpha}{9-M})^{1/2}\leq \sqrt{8\alpha}. 
$$

(2) Assume $\alpha\leq 0$. Then by (1) above, $\alpha+2g-2<0$. If $3a\leq |\alpha+2g-2|$, we are done. Otherwise,
let $0<\epsilon<3$ be any real number such that $\epsilon a>|\alpha+2g-2|$. Then 
$$
\frac{3a-\epsilon a}{M}<\frac{3a-|\alpha+2g-2|}{M}
\leq \frac{1}{M} \sum_{i=1}^N b_i\leq (\frac{1}{M} \sum_{i=1}^N b_i^2)^{1/2}=(\frac{a^2+\alpha}{M})^{1/2},
$$
which gives $(3-\epsilon)^2a^2< M(a^2+\alpha)$. Since $\alpha\leq 0$, we will 
arrive at a contradiction if $(3-\epsilon)^2-N>0$. If $N=8$, then $(3-\epsilon)^2-N>0$ if we choose 
$\epsilon=1/6$. This implies that $\epsilon a\leq |\alpha+2g-2|$ for $\epsilon=1/6$. It follows that if $N=8$, $a\leq \frac{1}{\epsilon}|\alpha+2g-2|=6|\alpha+2g-2|$. By the same argument, if $N=7$, we may choose $\epsilon=1/3$, and if $N\leq 6$, we may choose $\epsilon=1/2$, so that 
$a\leq 3|\alpha+2g-2|$ if $N=7$ and $a\leq 2|\alpha+2g-2|$ if $N\leq 6$.

(3) Assume $\alpha>0$ and $\alpha+2g-2<0$. Since $g\geq 0$, we must have $g=0$, $\alpha=1$ in this case. In particular, $\alpha+2g-2=-1$. 

Let $0<\epsilon<3$ be any real number such that $\epsilon a> |\alpha+2g-2|=1$. Then
$$
\frac{3a-\epsilon a}{M}<\frac{3a-|\alpha+2g-2|}{M}
\leq \frac{1}{M} \sum_{i=1}^N b_i\leq (\frac{1}{M} \sum_{i=1}^N b_i^2)^{1/2}=(\frac{a^2+1}{M})^{1/2},
$$
which gives $(3-\epsilon)^2a^2< 8(a^2+1)$. Equivalently, 
$$
(1-6\epsilon +\epsilon^2)a^2<8. 
$$
Now choose $\epsilon=1/7$. Then the assumption $\epsilon a>1$ means $a>7$, $\epsilon^2a^2>1$.
It follows from $(1-6\epsilon +\epsilon^2)a^2<8$ that $a^2/7+1<8$, which contradicts $a>7$. Thus the 
assumption $\epsilon a>1$, with $\epsilon=1/7$, cannot be true. Hence $a\leq 7$. 

\end{proof}

We summarize Lemmas 2.6 and 2.7 into the following corollary, from which Proposition 2.5 follows easily
(with the help of Lemma 2.4). Moreover, it also gives an explicit description of the constant $C(\alpha,g,N)$ in Proposition 2.5.

\begin{corollary}
Fix any standard basis $H,E_1,E_2,\cdots,E_N$, and let $A=aH-\sum_{i=1}^N b_i E_i$ be any class (not necessarily admissible) with $A^2=-\alpha$, $g(A)=g\geq 0$.  Assume $a>0$. 
\begin{itemize}
\item [{(1)}] If $\alpha+2g-2>0$ and $N\leq 9$, then $a\leq 3$. 
\item [{(2)}] If $\alpha+2g-2=0$ and $N\leq 8$, then $a\leq 3$.
\item [{(3)}] Suppose $\alpha+2g-2=-1$. If $N\leq 7$, then $a\leq 3$, and if $N\leq 8$, then $a\leq 7$. 
\item [{(4)}] Suppose $\alpha+2g-2\leq -2$ and $N\leq 8$. Then $a\leq 6|\alpha+2g-2|$ if $N=8$, 
$a\leq 3|\alpha+2g-2|$ if $N=7$, and $a\leq 2|\alpha+2g-2|$ if $N\leq 6$.
\end{itemize}
\end{corollary}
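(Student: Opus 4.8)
The plan is to derive all four statements by combining Lemma 2.6 and Lemma 2.7 in a case-by-case fashion, the only new structural input being the trivial bound $M\leq N$, where $M$ is the number of nonzero $b_i$-coefficients of $A$. Since Lemma 2.6 asserts that $M\geq 10-\delta$ whenever $a>3$ (under the standing hypothesis $\alpha+2g-2\geq -2$), its contrapositive yields $a\leq 3$ as soon as $N\leq 9-\delta$. Thus the first task is simply to compute $\delta=\max\{0,\,1-(\alpha+2g-2)\}$ in each regime and to check that the hypothesis on $N$ forces $N\leq 9-\delta$ (and that $\alpha+2g-2\geq -2$ so that Lemma 2.6 applies at all).

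For statement (1), $\alpha+2g-2>0$ is an integer $\geq 1$, so $\delta=0$ and the hypothesis $N\leq 9=9-\delta$ gives $a\leq 3$ directly from Lemma 2.6. For (2), $\alpha+2g-2=0$ gives $\delta=1$, so $N\leq 8=9-\delta$ again yields $a\leq 3$; I would flag here that Lemma 2.7(1) alone would only give $a\leq\sqrt{8\alpha}\leq 4$, so the sharper bound genuinely requires Lemma 2.6. For the first half of (3), $\alpha+2g-2=-1$ gives $\delta=2$, so $N\leq 7=9-\delta$ produces $a\leq 3$. In each instance $\alpha+2g-2\geq -2$ is immediate, so Lemma 2.6 is legitimately invoked, and the three bounds on $N$ match $9-\delta$ exactly, which is why the numerology is forced.

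The remaining assertions come from Lemma 2.7, after recording the sign of $\alpha$. For the second half of (3), where $\alpha+2g-2=-1$ and $N\leq 8$, I would write $\alpha=1-2g$ and split on $g$: if $g=0$ then $\alpha=1>0$ and Lemma 2.7(3) gives $a\leq 7$; if $g\geq 1$ then $\alpha\leq -1\leq 0$, so Lemma 2.7(2) gives $a\leq 6|\alpha+2g-2|=6\leq 7$. For (4), the hypothesis $\alpha+2g-2\leq -2$ with $g\geq 0$ forces $\alpha\leq -2g\leq 0$, so Lemma 2.7(2) applies verbatim and delivers exactly the stated bounds $6|\alpha+2g-2|$, $3|\alpha+2g-2|$, $2|\alpha+2g-2|$ according to whether $N=8$, $N=7$, or $N\leq 6$.

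The argument is essentially bookkeeping, so I do not expect a genuine obstacle; the two places that demand care are (i) recognizing in (2) that Lemma 2.6, not Lemma 2.7(1), is needed to reach $a\leq 3$ rather than merely $a\leq 4$, and (ii) the $N\leq 8$ half of (3), where the uniform conclusion $a\leq 7$ is reached by two different routes depending on whether $\alpha$ is positive (Lemma 2.7(3)) or nonpositive (Lemma 2.7(2)). Once the corollary is in hand, the finiteness of admissible classes with $A^2=-\alpha$ and $g(A)=g$ (hence Proposition 2.5) follows at once: an upper bound on $a$ feeds into Lemma 2.4, which then bounds the remaining coefficients.
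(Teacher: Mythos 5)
Your proposal is correct and matches the paper's intent exactly: the paper states Corollary 2.8 as a direct summary of Lemmas 2.6 and 2.7 without writing out the case analysis, and your bookkeeping (using $M\leq N$ against the bound $M\geq 10-\delta$ of Lemma 2.6 for the $a\leq 3$ cases, and Lemma 2.7(2)--(3) for the rest) is the intended derivation. The two points you flag — that Lemma 2.7(1) alone only gives $a\leq 4$ in case (2), and the split on the sign of $\alpha$ in the $N\leq 8$ half of case (3) — are handled correctly.
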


\subsection{Uniqueness of reduced bases up to a symplectomorphism}
To proceed further, we include here a lemma addressing the issue of uniqueness of reduced bases for a given
symplectic structure $\omega$ on $X$. In particular, it shows that a reduced basis $H,E_1,E_2, \cdots,E_N$ 
of $(X,\omega)$ is unique if and only if  for any distinct indices $i,j,k$, $\omega(H-E_i-E_j-E_k)>0$. In general,
it is only unique up to a symplectomorphism.

To this end, we recall that $(X,\omega)$ is called {\bf monotone} if $c_1(K_\omega)=-\lambda [\omega]$ 
for some constant $\lambda>0$. It is easy to see that under this assumption, every symplectic $(-1)$-sphere 
in $(X,\omega)$ has the same area, from which it follows easily that every standard basis 
$H,E_1,E_2,\cdots,E_N$ is a reduced basis of $(X,\omega)$. Furthermore, for any $i=1,2,\cdots, N$, 
$\omega(H)=3\omega(E_i)$, and each $E_i$ has the minimal symplectic area in $(X,\omega)$. Consequently, 
for any two standard bases $H,E_1,E_2,\cdots,E_N$ and $H^\prime,E_1^\prime,E_2^\prime,\cdots,E_N^\prime$
of $(X,\omega)$, there is an automorphism $\tau$ of $H^2(X)$ which is a product of finitely many reflections $R(\gamma_s)$, where $\gamma_s=E_i-E_j$ or $H-E_i-E_j-E_k$, and  $\omega(\gamma_s)=0$, $\forall s$, 
such that $H^\prime,E_1^\prime,E_2^\prime,\cdots,E_N^\prime$ is transformed to $H,E_1,E_2,\cdots,E_N$ 
under $\tau$. See \cite{CLW}, Section 2.2, for a more detailed discussion. 

With the preceding understood, we have 

\begin{lemma}
Let $H,E_1,E_2,\cdots,E_N$ and $H^\prime,E_1^\prime,E_2^\prime,\cdots,E_N^\prime$ be two
distinct reduced bases of $(X,\omega)$. Then they must be related by an automorphism of $H^2(X)$
which is a product of finitely many reflections $R(\gamma_s)$, where $\gamma_s=E_i-E_j$ or 
$H-E_i-E_j-E_k$, and $\omega(\gamma_s)=0$ for all $s$. Moreover, one of the $(-2)$-classes $\gamma_s$
must be of the form $H-E_i-E_j-E_k$. As a consequence, $H,E_1,E_2,\cdots,E_N$ and $H^\prime,E_1^\prime,E_2^\prime,\cdots,E_N^\prime$ are related by a symplectomorphism of
$(X,\omega)$.
\end{lemma}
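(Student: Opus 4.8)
The plan is to translate the statement into the language of the Weyl group $W$ generated by the reflections $R(\gamma)$ with $\gamma=E_i-E_j$ or $\gamma=H-E_i-E_j-E_k$, and then to use that the area data of a reduced basis lands in a fundamental domain for $W$. First I would note that, since $H,E_1,\dots,E_N$ and $H',E_1',\dots,E_N'$ are both standard bases, the change of basis is an automorphism $\phi$ of $H^2(X)$ preserving the intersection form and $c_1(K_X)$; by \cite{LL}, Theorem 3.1, $\phi$ is therefore \emph{some} product of reflections $R(\gamma_s)$ of the two listed types. The whole problem is to improve this to a product in which every $\gamma_s$ satisfies $\omega(\gamma_s)=0$. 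Here I would record two elementary facts: every root $\gamma$ of the two types has $c_1(K_X)\cdot\gamma=0$, and a reflection $R(\gamma)$ with $\omega(\gamma)=0$ preserves all $\omega$-areas, since $\omega(R(\gamma)(A))=\omega(A)+(\gamma\cdot A)\,\omega(\gamma)=\omega(A)$.

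The key step is to prove that $\phi$ fixes the class $[\omega]$. Using the intersection form to write $[\omega]=\lambda_0 H-\sum_i\lambda_i E_i$ with $\lambda_0=\omega(H)$, $\lambda_i=\omega(E_i)$, the hypothesis that $H,E_1,\dots,E_N$ is reduced says precisely that the coordinates of $[\omega]$ satisfy the inequalities (i)--(iii), i.e.\ that $[\omega]$ lies in the reduced chamber $\mathcal{C}_{\mathrm{red}}$. Since $\phi$ is an isometry carrying the unprimed basis to the primed one, the coordinates of $\phi^{-1}[\omega]$ in the unprimed basis are exactly $(\omega(H'),\omega(E_1'),\dots,\omega(E_N'))$; the hypothesis that the primed basis is also reduced says that these too satisfy (i)--(iii), so $\phi^{-1}[\omega]\in\mathcal{C}_{\mathrm{red}}$ as well. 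Now I would invoke the canonical-form property of reduced bases, namely that $\mathcal{C}_{\mathrm{red}}$ is a closed fundamental domain for the action of $W$ on the forward cone $\{A^2>0\}$ (this is the content behind the existence and normalization results of \cite{BP, LW, KK}). As $[\omega]$ and $\phi^{-1}[\omega]$ lie in one $W$-orbit and both in $\mathcal{C}_{\mathrm{red}}$, they must coincide, so $\phi$ fixes $[\omega]$.

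Once $\phi\in\mathrm{Stab}_W([\omega])$, I would finish the first assertion by the standard description of point-stabilizers for a fundamental domain: the stabilizer of a point of the closed chamber is generated by the reflections in the walls through that point. The walls of $\mathcal{C}_{\mathrm{red}}$ are $\{\lambda_i=\lambda_j\}$ and $\{\lambda_0=\lambda_i+\lambda_j+\lambda_k\}$, i.e.\ exactly the hyperplanes $\{\omega(E_i-E_j)=0\}$ and $\{\omega(H-E_i-E_j-E_k)=0\}$; hence $\phi=R(\gamma_1)\cdots R(\gamma_m)$ with $\omega(\gamma_s)=0$ for every $s$. (Since $[\omega]^2>0$ and the form has signature $(1,N)$, the complement $[\omega]^{\perp}$ is negative definite, so only finitely many roots are orthogonal to $[\omega]$ and this point-stabilizer is a genuine finite reflection group, which is what makes the Steinberg-type argument legitimate.) For the ``moreover'' clause, I observe that the reflections $R(E_i-E_j)$ generate precisely the copy of $S_N$ in $W$ that permutes the $E_i$ and fixes $H$; therefore, if the two reduced bases are genuinely distinct rather than a reordering of one another, then $\phi\notin S_N$ and no expression of $\phi$ in the allowed reflections can avoid a factor $R(H-E_i-E_j-E_k)$.

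For the consequence I would realize $\phi$ by a symplectomorphism one reflection at a time. Each $\gamma_s$ satisfies $\gamma_s^2=-2$, $c_1(K_X)\cdot\gamma_s=0$ and $\omega(\gamma_s)=0$, and such classes are represented by Lagrangian spheres $L_s$ in $(X,\omega)$; the generalized Dehn twist $\tau_{L_s}$ is then a symplectomorphism of $(X,\omega)$ inducing $R(\gamma_s)$ on $H^2(X)$, so the composite $\tau_{L_1}\circ\cdots\circ\tau_{L_m}$ is a symplectomorphism inducing $\phi$ and hence carrying one reduced basis to the other (one may instead cite directly the realization of $\omega$-orthogonal reflections by symplectomorphisms in \cite{LL, LW}). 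I expect the main obstacle to be precisely the two structural inputs of the middle step: the fundamental-domain property of the reduced chamber --- equivalently, that the reduced area vector is a symplectic invariant --- and the point-stabilizer description in the indefinite lattice. Should one wish to avoid importing these, the fallback is a downward induction on $N$: match the globally minimal-area classes $E_N,E_N'\in\E_\omega$ up to an $\omega$-orthogonal reflection, symplectically blow them down to pass to $\C\P^2\#(N-1)\overline{\C\P^2}$ as in Lemma 2.3(2), and iterate, the delicate points being the ties in minimal area and the Cremona exchanges $\lambda_0=\lambda_i+\lambda_j+\lambda_k$.
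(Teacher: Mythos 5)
Your argument is essentially correct, but it follows a genuinely different route from the paper's. The paper proves the lemma by downward induction on $N$: it matches $E_N'$ with $E_N$ using either a permutation $R(E_i-E_j)$ or, via Lemma 2.6 of \cite{CLW} (a minimal-area exceptional class other than some $E_m$ must be of the form $H-E_1-E_j$ unless $(X,\omega)$ is monotone), a zero-area reflection $R(H-E_1-E_j-E_N)$, then symplectically blows down $E_N$ and recurses, with an explicit base case at $N=3$ --- exactly the ``fallback'' you sketch in your last sentence. Your main line instead treats the Poincar\'e dual of $[\omega]$ as a point of the closed reduced chamber and invokes the fundamental-domain and stabilizer theory of the Coxeter group $W$. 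This is cleaner and makes the mechanism transparent (non-uniqueness occurs exactly when $[\omega]$ lies on a wall $\omega(H-E_i-E_j-E_k)=0$, which is the characterization the paper states just before the lemma), but it outsources two inputs the paper does not itself prove: that the basis change lies in $W$ (asserted in the paper's preamble, so shared background), and the Tits-cone statement that two points of the closed chamber in one $W$-orbit coincide and that the stabilizer of such a point is generated by the \emph{simple} reflections fixing it. The word ``simple'' matters here: if you only knew the stabilizer was generated by reflections in arbitrary roots orthogonal to $[\omega]$, those roots could be of the form $2H-E_{i_1}-\cdots-E_{i_6}$, which is not one of the two allowed shapes, and the lemma's conclusion would not follow; your phrasing via walls of the reduced chamber implicitly uses the correct version, but it deserves a precise citation. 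Two smaller remarks: your ``moreover'' step, like the paper's own proof, tacitly reads ``distinct'' as ``not merely a reordering of equal-area $E_i$'s'' (otherwise a single $R(E_i-E_j)$ suffices and the clause fails as literally stated); and the final realization by Lagrangian Dehn twists is identical to the paper's, via \cite{LW} and \cite{Seidel}.
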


\begin{proof}
Our strategy of proof is by an induction on $N$, assuming $N\geq 3$. More precisely, if $E_N^\prime=E_N$, we symplectically blow down $(X,\omega)$ along $E_N$. Then $H,E_1,E_2,\cdots,E_{N-1}$ and $H^\prime,E_1^\prime,E_2^\prime,\cdots,E_{N-1}^\prime$ 
naturally descend to reduced bases of the 
blow-down manifold, which are obviously distinct as well (cf. Lemma 4.2 in \cite{C1}).

Suppose $E_N^\prime\neq E_N$. Consider the first possibility that $E_N^\prime=E_m$ for some 
$m<N$. In this case, since  $E_N^\prime$ has the minimal area, we have $E_m, E_{m+1}, \cdots,E_N$
all have the same area. If we apply the reflection $R(E_m-E_N)$, where $\omega(E_m-E_N)=0$, 
to the reduced basis $H,E_1,E_2,\cdots,E_N$, the classes $E_m$ and $E_N$ are switched. In this way, we arrive at the condition $E_N^\prime=E_N$ to run the induction process.

For the remaining possibility where $E_N^\prime\neq E_m$ for any $m$, with $N\geq 3$,
we recall Lemma 2.6 of \cite{CLW}, which says that either $(X,\omega)$ is monotone, or otherwise there is a $j>1$ such that $E_N^\prime=H-E_1-E_j$. If $(X,\omega)$ is monotone, then the lemma is trivially true. Assuming the latter case, we note that if $j<N$, we can apply the reflection 
$R(H-E_1-E_j-E_N)$, observing that $\omega(H-E_1-E_j-E_N)=0$, to the reduced basis $H,E_1,E_2,\cdots,E_N$, so that the last class $E_N$ is changed to $H-E_1-E_j=E_N^\prime$. If $j=N$, we shall apply $R(H-E_1-E_{N-1}-E_N)$ to the reduced basis $H,E_1,E_2,\cdots,E_N$. 
(Note that $\omega(H-E_1-E_{N-1}-E_N)=0$, 
because $\omega(H-E_1-E_N)=\omega(E_N^\prime)\leq \omega(E_{N-1})$.)
The resulting reduced basis has the last two classes being $H-E_1-E_N$ and $H-E_1-E_{N-1}$
respectively. With this understood, we apply $R(E_{N-1}-E_N)$ to it to switch $H-E_1-E_N$ with 
$H-E_1-E_{N-1}$ (note that $\omega(E_{N-1}-E_N)=0$). Then the resulting reduced basis has 
the last class being $H-E_1-E_N=H-E_1-E_j=E_N^\prime$. Hence if $(X,\omega)$ is not monotone, we can always arrange so that $E_N^\prime=E_N$ to run the induction on $N$.

Now suppose $N=3$, and after some arrangement, $E_N^\prime=E_N$. Blowing down $X$ along $E_N$, we get $\C\P^2\# 2\overline{\C\P^2}$, with a pair of bases $H,E_1,E_2$ and $H^\prime,E_1^\prime,E_2^\prime$. We claim $H,E_1,E_2$ and $H^\prime,E_1^\prime,E_2^\prime$ are the same up to switching the order of $E_1,E_2$.
To see this, the key observation is that for $\C\P^2\# 2\overline{\C\P^2}$, the only $(-1)$-spheres 
whose intersection with the canonical class equals $-1$ are $E_1,E_2$, and $H-E_1-E_2$, and
moreover, $H-E_1-E_2$ intersects with both $E_1,E_2$ nontrivially. It follows easily that
$E_1^\prime,E_2^\prime$ must be $E_1,E_2$ up to a change of order, and $H=H^\prime$. 

Finally, we note that each of the classes $\gamma$ can be represented by a Lagrangian
sphere $L$ (cf. \cite{LW}). Moreover, the Dehn twist $\tau_L$ associated to $L$ is a symplectomorphism realizing $R(\gamma)$ (cf. \cite{Seidel}). This finishes the proof of the lemma.

\end{proof}

\subsection{Freedom to specify the areas and finiteness of homological assignments}

Next, we assume $D=\cup_{k=1}^n F_k\subset X$ is symplectic with respect to a symplectic structure $\omega_0$ on $X$, which has canonical class $c_1(K_X)$. We will address the issue of freedom of choosing the areas of the surfaces $F_k$ in $D$ (by altering the symplectic structure $\omega_0$ if necessary), under the additional assumption (\ddag) in Section 1. Also recall the cone $C_\delta$ from Section 1, where $Q=(\nu_{kl})$ with 
$\nu_{kl}:=F_k\cdot F_l$:
\begin{itemize}
\item if $Q$ is negative definite, then $C_\delta=\{\vec{\delta}\in\R^n|\vec{\delta}\geq 0\}$, 
\item if $D$ is connected and $Q$ is non-singular and non-negative definite, then
$$
C_\delta=\{\vec{\delta}\in\R^n|\vec{\delta}\geq 0 \mbox{ and } Q^{-1}\vec{\delta}\geq 0\}.
$$
\end{itemize}

\begin{lemma}
Under the assumption $(\ddag)$,  for any interior point $\vec{\delta}=(\delta_k)\in C_\delta$, there exists a symplectic structure $\omega$ on $X$, with respect to which $D$ is symplectic, such that the canonical class of
$\omega$ is $c_1(K_X)$ and $\omega(F_k)=\delta_k$ for $k=1,2,\cdots,n$. In particular, $Z(\vec{\delta})\neq \emptyset$ (see Definition 1.4 for the definition of $Z(\vec{\delta})$).
\end{lemma}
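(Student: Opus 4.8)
The plan is to produce the required forms by symplectic inflation along the components $F_k$, starting from the given $\omega_0$ under which $D$ is symplectic, and to package the cohomological effect of inflation through the intersection matrix $Q=(\nu_{kl})$ so that the cone $C_\delta$ emerges from elementary linear algebra. Two preliminary remarks fix the framework. First, rescaling $\omega_0\mapsto c\,\omega_0$ with $c>0$ leaves the space of compatible almost complex structures unchanged, hence preserves the canonical class $c_1(K_X)$, while multiplying every area by $c$; consequently the set of realizable area vectors is invariant under positive scaling, and any inflation below proceeds through a connected family of symplectic forms, along which $c_1(K_X)$ stays the canonical class automatically. Second, because the components of $D$ meet transversally and positively by (\dag), inflating in a neighborhood of one $F_k$ keeps every other $F_l$ symplectic, so one may inflate along all the $F_k$ at once.

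The geometric input is the inflation construction of \cite{C,C1}: along an embedded $\omega$-symplectic surface $Z$ one can deform $\omega$, supported near $Z$, to a symplectic form in the class $[\omega]+t\,\mathrm{PD}[Z]$ keeping $Z$ symplectic, available for all $t\ge 0$ when $Z^2\ge 0$ and for $t$ in the range keeping the area of $Z$ positive when $Z^2<0$. Applying this simultaneously to the $F_k$ with parameters $\vec t=(t_k)\ge 0$ yields a form in the class $[\omega_0]+\sum_k t_k\,\mathrm{PD}[F_k]$, under which $D$ remains symplectic, and whose area vector is
$$
\vec d+Q\vec t,\qquad \vec d=(\omega_0(F_k))_k ,
$$
since $\omega(F_l)=\omega_0(F_l)+\sum_k t_k(F_k\cdot F_l)$. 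Combining with scaling, the realizable area vectors contain all $c\,\vec d+Q\vec t$ with $c>0$ and $\vec t\ge 0$ admissible, and it remains to check that this set exhausts the interior of $C_\delta$ in each case of (\ddag).

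Suppose first $Q$ is negative definite, so $C_\delta=\{\vec\delta\ge 0\}$ and a target interior point satisfies $\vec\delta>0$. Here $-Q$ is a Stieltjes matrix (symmetric positive definite with non-positive off-diagonal entries, the off-diagonals $\nu_{kl}\ge 0$ coming from (\dag)), whose inverse therefore has non-negative entries; write $P:=-Q^{-1}\ge 0$. Choose $c\ge \max_k \delta_k/d_k$, so that $c\vec d-\vec\delta\ge 0$, and set $\vec t:=P(c\vec d-\vec\delta)\ge 0$. Then $Q\vec t=-(c\vec d-\vec\delta)=\vec\delta-c\vec d$, so inflating $c\,\omega_0$ along the $F_k$ by $\vec t$ lands at area vector $\vec\delta$; moreover along the straight inflation the areas are the convex combinations $(1-\tau)c\vec d+\tau\vec\delta$, $\tau\in[0,1]$, which stay strictly positive, so each negative component remains in its admissible inflation range throughout. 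If instead $D$ is connected and $Q$ is non-singular and non-negative definite, then $Q$ is positive definite, and the hypothesis that $\vec\delta$ is interior to $C_\delta$ is precisely $\vec s:=Q^{-1}\vec\delta>0$. Taking $\vec t=\vec s-c\,Q^{-1}\vec d$, which is positive for small $c>0$, and inflating $c\,\omega_0$ along the $F_k$ by $\vec t$ — along which the areas are again the strictly positive convex combinations $(1-\tau)c\vec d+\tau\vec\delta$ — produces area vector $c\vec d+Q\vec t=\vec\delta$. In either case $Z(\vec\delta)\neq\emptyset$.

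The substantive point, and the main obstacle, is the geometric inflation step rather than the linear algebra: one must know that a configuration of positively intersecting symplectic surfaces can be simultaneously inflated with the stated cohomological effect while remaining symplectic, and that for the negative components the admissible range is governed exactly by positivity of the areas. This is supplied by \cite{C} (Lemma 4.1) and the blow-down/inflation analysis of \cite{C1}; the contribution here is the observation that the Stieltjes property of $-Q$ in the negative-definite case and the sign condition $Q^{-1}\vec\delta\ge 0$ defining $C_\delta$ in the positive-definite case are exactly what permit solving $Q\vec t=\vec\delta-c\vec d$ with admissible parameters, so that every interior point of $C_\delta$ is attained.
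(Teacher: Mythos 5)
Your route is genuinely different from the paper's. The paper disposes of the negative definite case by directly invoking Lemma 4.1 of \cite{C}, which constructs a symplectic form with areas $\epsilon_0\delta_k$ for small $\epsilon_0$ and then rescales; for the non-singular non-negative definite case it uses Li--Mak's divisorial capping: deform $\omega_0$ so that a regular neighborhood $U$ of $D$ has concave contact boundary, build a second concave neighborhood $U'$ realizing the prescribed areas $\delta_k$, and swap $U$ for $U'$ using the contactomorphism of the two boundaries (Theorems 1.3 and 1.7 of \cite{LM}), again followed by a rescaling. Your replacement of this by inflation plus linear algebra is appealing: the observation that $-Q$ is a Stieltjes matrix in the negative definite case (so $(-Q)^{-1}\geq 0$ entrywise), and that the interior condition $Q^{-1}\vec{\delta}>0$ is exactly what makes the inflation parameters positive in the positive definite case, is correct and explains transparently why $C_\delta$ is the right cone. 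The scaling and convex-combination bookkeeping, and the remark that a smooth family of symplectic forms has constant canonical class, are also fine.

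The gap is in the geometric engine. Your argument requires a \emph{relative, simultaneous} inflation: a family of symplectic forms in the classes $c[\omega_0]+\tau\sum_k t_k\,\mathrm{PD}[F_k]$, $\tau\in[0,1]$, with respect to which \emph{every} component of $D$ stays symplectic, and with the admissible range for the negative-square components governed solely by positivity of their areas along the path. This is not supplied by the references you cite: Lemma 4.1 of \cite{C} is a direct neighborhood construction for negative definite configurations (it is what the paper itself quotes for that case), not an inflation theorem, and \cite{C1} concerns blow-downs. Inflation along a single embedded symplectic surface of negative square is Buse's theorem, but inflation along a reducible, transversally intersecting configuration (``singular inflation'') is substantially more delicate --- it is the subject of McDuff--Opshtein's work, where the naive statement required correction --- and your one-sentence justification that inflating near one $F_k$ keeps the other $F_l$ symplectic does not by itself produce the claimed family of forms, nor the claimed characterization of the admissible range when several negative components intersect. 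Until that relative inflation statement is proved or correctly sourced, the key step of your proof is a black box, and the (correct) linear algebra rests entirely on it.
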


\begin{proof}
The case where $Q$ is negative definite is essentially proved in Lemma 4.1 of \cite{C}. It is shown 
there that for any interior point $\vec{\delta}=(\delta_k)\in C_\delta$, there exists an
$\epsilon_0>0$ sufficiently small, and a symplectic structure $\omega$ on $X$ such that the canonical class of 
$\omega$ is $c_1(K_X)$ and $\omega(F_k)=\epsilon_0\delta_k$ for $k=1,2,\cdots,n$. 
Simply change $\omega$ to $\epsilon_0^{-1}\omega$. 

For the case where $D$ is connected and $Q$ is non-singular and not negative definite, the proof is similar in strategy. By our assumption, $D\subset X$ is symplectic with respect to a symplectic structure $\omega_0$. By Theorem 1.3 of \cite{LM}, one can deform 
$\omega_0$ to a symplectic structure $\omega_1$ such that $D$ is symplectic with respect to 
$\omega_1$, and there is a regular neighborhood $U$ of $D\subset X$ such that $\partial U$ 
is a concave contact boundary of $(U,\omega_1)$. Now given any interior point 
$\vec{\delta}=(\delta_k)\in C_\delta$, i.e., $\vec{\delta}>0$ and $Q^{-1}\vec{\delta}>0$, 
it is shown in \cite{LM} (see Sec. 2.1.1 of \cite{LM}) that there is a regular neighborhood 
$U^\prime$ of $D$ and a symplectic structure $\omega^\prime$ on $U^\prime$ such that 
$\partial U^\prime$ is a concave contact boundary of $(U^\prime,\omega^\prime)$ and 
$\omega^\prime(F_k)=\delta_k$ for each $k$. Moreover, by Theorem 1.7 of \cite{LM}, the contact structures on $\partial U$ and $\partial U^\prime$ are contactomorphic. Since the contact boundaries 
$\partial U$ and $\partial U^\prime$ are concave, there exists a $C_0>0$ sufficiently large, such that
one can remove $U$ from $X$ and then glue back $U^\prime$ contactomorphically, to obtain a
symplectic structure $\omega$ on $X$, such that $\omega(F_k)=C_0\delta_k$ for 
$k=1,2,\cdots,n$. In addition, as we argued in the proof of Lemma 4.1 in \cite{C}, 
one has the canonical class of $\omega, \omega_1$ being the same, which is $c_1(K_X)$.
To finish the proof, one simply replace $\omega$ by $C_0^{-1}\omega$. 

\end{proof} 

\begin{definition}
A homological assignment $(\vec{v}_k)\in\hat{\Omega}(D)$ is called {\bf area-robust} if for 
any interior point $\vec{\delta}$ of ${C}_\delta$, there is a $\vec{\lambda}=(\lambda_0,\lambda_1,\cdots,\lambda_N)^T\in\R^{N+1}$, such that
$$
\I_{(\vec{v}_k)}\vec{\lambda}=\vec{\delta}, \mbox{ where }\vec{\lambda}\in C_\lambda, \; \vec{\lambda}>0, 
\mbox{ and } \lambda_0^2-\sum_{i=1}^N\lambda_i^2>0.
$$
Here $\I_{(\vec{v}_k)}$ is the associated matrix of $(\vec{v}_k)$.
\end{definition}

It is easy to see that for an area-robust homological assignment $(\vec{v}_k)\in\hat{\Omega}(D)$, no matter how to choose the areas $\vec{\delta}\in {C}_\delta$, it is always possible that $(\vec{v}_k)$ is realized under 
$\vec{\delta}$. In other words, $(\vec{v}_k)$ cannot be eliminated by specifying the areas of the $F_k$'s. 

The following is a useful criterion for area-robustness. 

\begin{lemma}
Let $\I$ be the associated matrix of a homological assignment. If there is a vector 
$\vec{x}=(x_0,x_1,\cdots,x_N)^T\in \R^{N+1}$ in the null space of $\I$ such that $\vec{x}$ lies in the interior of the cone $C_\lambda$ and $x_0^2-\sum_{i=1}^N x_i^2>0$, then the homological assignment must be area-robust. 
\end{lemma}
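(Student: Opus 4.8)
The plan is to produce, for an arbitrary interior point $\vec{\delta}\in C_\delta$, a witness $\vec{\lambda}$ by correcting some preimage of $\vec{\delta}$ under $\I$ with a large multiple of the given null vector $\vec{x}$. Concretely, suppose we can find a particular solution $\vec{\lambda}^\ast$ of $\I\vec{\lambda}^\ast=\vec{\delta}$. Since $\I\vec{x}=0$, the entire ray $\vec{\lambda}(t):=\vec{\lambda}^\ast+t\vec{x}$, $t\geq 0$, also maps to $\vec{\delta}$, so the equation $\I\vec{\lambda}=\vec{\delta}$ is automatic along the ray. It then remains to verify the three open conditions $\vec{\lambda}(t)>0$, $\vec{\lambda}(t)\in C_\lambda$, and $\lambda_0(t)^2-\sum_{i=1}^N\lambda_i(t)^2>0$ for $t$ large, and to select a single $t$ that works for all of them simultaneously.

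The step I expect to be the main obstacle is the existence of the particular solution $\vec{\lambda}^\ast$, i.e. showing $\vec{\delta}$ lies in the image of $\I$. Here I would use that a homological assignment satisfies equations (1) and (3) of Definition 1.2, which say precisely that $\I G\I^T=Q$, where $G=\mathrm{diag}(1,-1,\dots,-1)$ is the matrix of the intersection form on $H^2(X)$ in the standard basis and $Q=(\nu_{kl})$. Under assumption $(\ddag)$ the matrix $Q$ is non-singular, since it is either negative definite or explicitly non-singular and non-negative definite. From $\I G\I^T=Q$ non-singular it follows that the $n\times(N+1)$ matrix $\I$ has full row rank $n$, hence is surjective onto $\R^n$; in particular $\vec{\delta}$ lies in its image and $\vec{\lambda}^\ast$ exists. (This also forces $n\leq N+1$, while the hypothesis that a nonzero interior null vector $\vec{x}$ exists forces $n\leq N$, so there is genuine room to move along $\vec{x}$.)

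With $\vec{\lambda}^\ast$ in hand, the rest is a routine escape-to-infinity argument along $\vec{x}$. Because $\vec{x}$ lies in the interior of $C_\lambda$, every defining inequality of $C_\lambda$ is strict at $\vec{x}$: one has $x_j>0$ for all $j$ and $x_0-x_i-x_j-x_k>0$ for all distinct $i,j,k$. Hence each of the finitely many affine functions $\lambda_j(t)=\lambda_j^\ast+tx_j$ and $\lambda_0(t)-\lambda_i(t)-\lambda_j(t)-\lambda_k(t)$ has positive slope in $t$ and therefore becomes positive once $t$ exceeds some threshold. Likewise $\lambda_0(t)^2-\sum_{i=1}^N\lambda_i(t)^2$ is a quadratic polynomial in $t$ whose leading coefficient is $x_0^2-\sum_{i=1}^N x_i^2>0$, so it too is eventually positive. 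Taking $t$ larger than the maximum of these finitely many thresholds yields a $\vec{\lambda}=\vec{\lambda}(t)$ meeting all four requirements of Definition 1.12, so the assignment is area-robust. The only nontrivial input is the surjectivity of $\I$ derived from $\I G\I^T=Q$; once that is available, the limiting argument is elementary.
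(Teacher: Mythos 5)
Your proof is correct and follows essentially the same route as the paper: establish that $\I$ has full row rank $n$ from the non-singularity of $Q$ (the paper states this without writing out $\I G\I^T=Q$, which you usefully make explicit), pick any preimage of $\vec{\delta}$, and push it far along the null vector $\vec{x}$ until all the strict inequalities defining area-robustness hold. No gaps.
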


\begin{proof}
Note that under the assumption (\ddag), the intersection matrix $Q$ of $D$ is non-singular, which implies that the matrix $\I$ must be of rank $n$. Hence for any $\vec{\delta}\in\R^n$, there is a $\vec{\eta}\in \R^{N+1}$ such that
$\I\vec{\eta}=\vec{\delta}$. Now choose a constant $C>0$ sufficiently large, we have 
$\I(\vec{\eta}+C\vec{x})=\vec{\delta}$, $\vec{\eta}+C\vec{x}$ lies in the interior of
the cone $C_\lambda$, and the entries of 
$$
\vec{\eta}+C\vec{x}=(\lambda_0,\lambda_1,\lambda_2,\cdots,\lambda_N)^T
$$ 
obey the constraint $\lambda_0^2-\sum_{i=1}^N\lambda_i^2>0$. This proves the area-robustness of the homological assignment. 

\end{proof}

\begin{example}
(1) Let $X=\C\P^2\# 12 \overline{\C\P^2}$, and let $F_1,F_2,\cdots,F_9$ be $9$ symplectic 
$(-3)$-spheres disjointly embedded in $X$. We fix a reduced basis $H,E_1,E_2,\cdots,E_{12}$.

Consider the following potential homological expression for $F_1,F_2,\cdots,F_9$:
\begin{itemize}
\item $H-E_{i}-E_{r}-E_{s}-E_t$, $H-E_{i}-E_u-E_v-E_w$, $H-E_{i}-E_x-E_y-E_z$,
\item $H-E_{j}-E_{r}-E_{u}-E_x$, $H-E_{j}-E_s-E_v-E_y$, $H-E_{j}-E_t-E_w-E_z$,
\item $H-E_{k}-E_{r}-E_{v}-E_z$, $H-E_{k}-E_s-E_w-E_x$, $H-E_{k}-E_t-E_u-E_y$,
\end{itemize}
and let $(\vec{v}_k)$ be the corresponding homological assignment. It is easy to see that 
$\vec{x}=(4,1,1,1,\cdots,1)$ is in the null space of the associated matrix $\I_{(\vec{v}_k)}$ of $(\vec{v}_k)$, 
as the corresponding homology class in $H^2(X)$, i.e., $4H-E_1-E_2-\cdots-E_{12}$, intersects trivially with the homological expression of each $F_k$. Furthermore, $\vec{x}$ lies in the interior of the cone $C_\lambda$, and satisfies the inequality $x_0^2-\sum_{i=1}^N x_i^2>0$ (which is $4^2-12>0$). This shows that the homological assignment $(\vec{v}_k)$ is area-robust. In other words, the potential homological expression for $F_1,F_2,\cdots,F_9$ cannot be eliminated by any choice of the areas of the $F_k$'s.

(2) Let $X=\C\P^2\# 7 \overline{\C\P^2}$. Suppose there are $7$ symplectic $(-2)$-spheres 
$F_1,F_2,\cdots,F_7$ disjointly embedded in $X$. We fix a reduced basis $H,E_1,E_2,\cdots,E_{7}$.

Consider the following potential homological expression of $F_1,F_2,\cdots,F_7$:
\begin{itemize}
\item $H-E_{l_1}-E_{l_2}-E_{l_3}$, $H-E_{l_1}-E_{l_4}-E_{l_5}$, $H-E_{l_1}-E_{l_6}-E_{l_7}$, 
\item $H-E_{l_2}-E_{l_4}-E_{l_6}$, $H-E_{l_3}-E_{l_5}-E_{l_6}$, $H-E_{l_2}-E_{l_5}-E_{l_7}$, 
\item $H-E_{l_3}-E_{l_4}-E_{l_7}$.
\end{itemize}
It is easy to see that $\vec{x}=(3,1,1,\cdots,1)$ is in the null space of the associated matrix $\I$,
as the corresponding homology class in $H^2(X)$, i.e., $3H-E_1-E_2-\cdots-E_{7}$, equals 
$-c_1(K_X)$, hence intersects trivially with each symplectic $(-2)$-sphere $F_k$. 
Note that $\vec{x}$ satisfies the inequality 
$x_0^2-\sum_{i=1}^N x_i^2>0$ (which is $3^2-7>0$), and $\vec{x}$ lies in the cone $C_\lambda$. However, 
$\vec{x}$ does not lie in the interior of $C_\lambda$. It turns out that this potential homological expression of $F_1,F_2,\cdots,F_7$ can be eliminated by a certain choice of the areas of the $F_k$'s.

\end{example}

We should point out that when $N\leq 8$, the area-robustness of $(\vec{v}_k)$ is reduced to the condition
${C}_\delta\subseteq \I_{(\vec{v}_k)}(C_\lambda)$, as the constraint $\lambda_0^2-\sum_{i=1}^N\lambda_i^2>0$ becomes redundant by the following lemma.

\begin{lemma}
Suppose $N\leq 9$. Then for any $\vec{\lambda}=(\lambda_0,\lambda_1,\cdots,\lambda_N)^T\in C_\lambda$ where $\vec{\lambda}>0$, one has $\lambda_0^2-\sum_{i=1}^N \lambda_i^2\geq 0$,
with $"="$ if and only if $N=9$ and $\vec{\lambda}=(\lambda_0,\lambda_0/3, \cdots, \lambda_0/3)^T$.
\end{lemma}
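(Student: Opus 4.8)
The plan is to reduce the inequality to its single binding triple constraint and then make a direct estimate, handling the equality case by tracking where each estimate is sharp. Since both the defining inequalities of $C_\lambda$ and the quantity $\lambda_0^2-\sum_{i=1}^N\lambda_i^2$ are invariant under permutations of the indices $1,2,\cdots,N$, I would first reorder the entries so that $\lambda_1\geq\lambda_2\geq\cdots\geq\lambda_N>0$, and assume $N\geq 3$ (for $N\leq 2$ there are no triple inequalities, and that degenerate range is not the case of interest). Among all the constraints $\lambda_0\geq\lambda_i+\lambda_j+\lambda_k$, the one involving the three largest entries, namely $\lambda_0\geq\lambda_1+\lambda_2+\lambda_3>0$, is the strongest, and it is the only constraint I will need.

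The core step is to split the target quantity as
$$
\lambda_0^2-\sum_{i=1}^N\lambda_i^2=\bigl(\lambda_0^2-(\lambda_1+\lambda_2+\lambda_3)^2\bigr)+\Bigl(2(\lambda_1\lambda_2+\lambda_1\lambda_3+\lambda_2\lambda_3)-\sum_{i=4}^N\lambda_i^2\Bigr).
$$
The first summand is nonnegative because $\lambda_0\geq\lambda_1+\lambda_2+\lambda_3>0$. For the second summand I would use the ordering twice: since $\lambda_1,\lambda_2\geq\lambda_3$, each of the three products satisfies $\lambda_1\lambda_2,\lambda_1\lambda_3,\lambda_2\lambda_3\geq\lambda_3^2$, so $2(\lambda_1\lambda_2+\lambda_1\lambda_3+\lambda_2\lambda_3)\geq 6\lambda_3^2$; and since $\lambda_i\leq\lambda_3$ for $i\geq 4$, one has $\sum_{i=4}^N\lambda_i^2\leq(N-3)\lambda_3^2$. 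Hence the second summand is at least $(9-N)\lambda_3^2$, which is nonnegative precisely when $N\leq 9$. This establishes the inequality.

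For the equality statement, equality $\lambda_0^2=\sum_{i=1}^N\lambda_i^2$ forces both summands above to vanish. Vanishing of the first, together with $\lambda_0+(\lambda_1+\lambda_2+\lambda_3)>0$, gives $\lambda_0=\lambda_1+\lambda_2+\lambda_3$. Vanishing of the second forces equality in every estimate simultaneously: $\lambda_1=\lambda_2=\lambda_3$ (sharpness of the three product bounds), $\lambda_i=\lambda_3$ for all $i\geq 4$ (sharpness of the square bounds), and $N=9$ (since $(9-N)\lambda_3^2=0$ with $\lambda_3>0$). Writing $\mu=\lambda_3$, this yields $\lambda_1=\cdots=\lambda_9=\mu$ and $\lambda_0=3\mu$, i.e. $\vec{\lambda}=(\lambda_0,\lambda_0/3,\cdots,\lambda_0/3)^T$, as claimed.

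The argument is elementary, so I do not expect a serious obstacle; the two places requiring care are identifying the correct binding constraint via the reordering (without $\lambda_1\geq\lambda_2\geq\lambda_3$ the estimate $2(\lambda_1\lambda_2+\lambda_1\lambda_3+\lambda_2\lambda_3)\geq 6\lambda_3^2$ need not hold) and the bookkeeping in the equality case, where one must track sharpness in the product inequalities, the square inequalities, and the counting bound $N\leq 9$ all at once. The only genuinely separate consideration is the low-dimensional range $N\leq 2$, where the triple inequalities are vacuous.
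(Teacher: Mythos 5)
Your proof is correct and follows essentially the same route as the paper: reorder so that $\lambda_1\geq\cdots\geq\lambda_N$, use the single constraint $\lambda_0\geq\lambda_1+\lambda_2+\lambda_3$, expand the square, and observe that the cross terms dominate $\sum_{i\geq 4}\lambda_i^2$ precisely because $N\leq 9$. The only difference is that you spell out the equality-case bookkeeping that the paper dismisses as ``easy to see,'' which is a harmless (indeed welcome) elaboration.
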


\begin{proof}
Without loss of generality, we assume $\lambda_1\geq \lambda_2\geq \cdots\geq \lambda_N$. Then
as $\lambda_0\geq \lambda_1+\lambda_2+\lambda_3$ and $N\leq 9$, we have
$$
\lambda_0^2\geq \lambda_1^2+\lambda_2^2+\lambda_3^2+2\lambda_1\lambda_2+2\lambda_1\lambda_3+2\lambda_2\lambda_3\geq \sum_{i=1}^N \lambda_i^2.
$$
Furthermore, it is easy to see that the equality holds in the above inequalities if and only if $N=9$ and 
$\vec{\lambda}=(\lambda_0,\lambda_0/3, \cdots, \lambda_0/3)^T$.

\end{proof}

Finally, we address the issue of finiteness of the set $\Omega(D)$, which is not guaranteed when
$N\geq 9$. In particular, we shall give a proof for Theorem 1.6.

The following lemma allows us to trade some freedom of choosing the areas of the $F_k$'s
for an upper bound on the $a$-coefficients of the homology classes of the $F_k$'s. 

\begin{lemma}
Let $H,E_1,E_2,\cdots,E_N$ be a reduced basis of $(X,\omega)$, and let 
$A=aH-\sum_{i=1}^N b_iE_i$ be the class of an embedded symplectic surface of genus $g$ 
and self-intersection $-\alpha$ in $(X,\omega)$, where $-2\leq \alpha+2g-2\leq 1$. We denote by $K_\omega$
the canonical line bundle associated to $\omega$.

{\em(1)} Assume $\omega(A)<-c_1(K_\omega)\cdot [\omega]$ and $a>3$. Then $g=0$, and $A$ must be of the following form
$$
A=aH-(a-1)E_{j_1}-E_{j_2}-\cdots- E_{j_{2a+\alpha}}.
$$
In particular, $a\leq \frac{1}{2}(N-\alpha)$. 

{\em (2)} Assume $2\omega(A)<-c_1(K_\omega)\cdot [\omega]$. Then $a\leq 3$.
\end{lemma}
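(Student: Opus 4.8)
The plan is to reduce everything to the two Diophantine identities
$$
\sum_{i=1}^N b_i = 3a+(\alpha+2g-2),\qquad \sum_{i=1}^N b_i^2=a^2+\alpha,
$$
which come from $A^2=-\alpha$ and $g(A)=g$; write $s:=\alpha+2g-2$, so $-2\le s\le 1$ and $\alpha\le 3-2g\le 3$. By Lemma 2.3(2) the class $A$ is admissible, and since $a>0$ this forces $b_i\ge 0$ for all $i$. Setting $\lambda_0=\omega(H)$ and $\lambda_i=\omega(E_i)$, the hypothesis of (1), namely $\omega(A)<-c_1(K_\omega)\cdot[\omega]=\omega(3H-\sum_iE_i)$, becomes the linear inequality
\begin{equation*}
(a-3)\lambda_0<\sum_{i=1}^N(b_i-1)\lambda_i.\tag{$\ast$}
\end{equation*}

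For part (1) I would first isolate the purely arithmetic consequence. Suppose some $b_{i_0}\ge a-1$. From $b_{i_0}^2\le\sum b_i^2=a^2+\alpha\le a^2+3<(a+1)^2$ we get $b_{i_0}\le a$, and $b_{i_0}=a$ is impossible since it would give $\sum_{i\ne i_0}b_i(b_i-1)=-2(a+g-1)<0$, contradicting $b_i(b_i-1)\ge 0$. Hence $b_{i_0}=a-1$ exactly; a second index equal to $a-1$ is excluded because it forces $\sum_{\text{rest}}b_i(b_i-1)=-(a-1)(a-2)-2g<0$. Substituting the single value $b_{i_0}=a-1$ into the identities yields $\sum_{i\ne i_0}b_i(b_i-1)=-2g$, which forces $g=0$ and $b_i\in\{0,1\}$ for $i\ne i_0$; counting the $1$'s gives exactly $2a+\alpha-1$ of them, i.e.\ the asserted normal form with $M=2a+\alpha\le N$, whence $a\le\tfrac12(N-\alpha)$.

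The geometric step, which is the heart of the matter, is to show that $(\ast)$ forces the existence of an index with $b_{i_0}\ge a-1$, and I would argue the contrapositive. Assume $b_i\le a-2$ for all $i$, put $W:=a-3>0$ and $d_i:=(b_i-1)_+$. Then $d_i\le W$, while Lemma 2.6 (applicable since $s\ge-2$) gives $M\ge 10-\delta=9+s$, which is exactly $\sum_i d_i=(3a+s)-M\le 3a-9=3W$. Thus $d/W$ lies in the hypersimplex $\{x:0\le x_i\le 1,\ \sum_i x_i=3\}$, whose vertices are the indicator vectors $\mathbf 1_T$ of $3$-element subsets $T$; writing $d/W$ as a convex combination yields weights $y_T\ge 0$ with $\sum_T y_T=W$ and $\sum_{T\ni i}y_T\ge d_i$. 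Combined with the reduced-basis inequality $\lambda_0-\sum_{i\in T}\lambda_i\ge 0$ (condition (ii)) and $\lambda_i\ge 0$, this gives the Farkas certificate
\[
(a-3)\lambda_0-\sum_i(b_i-1)\lambda_i\ \ge\ \sum_T y_T\Big(\lambda_0-\sum_{i\in T}\lambda_i\Big)+\sum_i\Big(\sum_{T\ni i}y_T-d_i\Big)\lambda_i\ \ge\ 0,
\]
contradicting $(\ast)$. The main obstacle is precisely this step: one must recognize $(\ast)$ as a linear-programming feasibility problem over the cone $C_\lambda$ and produce the dual certificate, and it is essential that Lemma 2.6 supplies the count $M\ge 9+s$ with \emph{no} slack. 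The family $A_t$ of Section 2 (for which $\sum_i d_i=3W$ and every $d_i\le W$) shows the estimate is sharp, so any lossy bound — a crude Cauchy–Schwarz, or an integer triple-grouping with ceilings — will not suffice; the fractional (hypersimplex) relaxation is what makes it go through.

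For part (2) I would bootstrap from (1). Since $\omega(A)>0$, the hypothesis $2\omega(A)<-c_1(K_\omega)\cdot[\omega]$ implies the hypothesis of (1); so if $a>3$ then $A$ has the normal form above with $g=0$ and $0\le\alpha\le 3$. Inserting this form into the stronger inequality and simplifying gives
\[
(2a-3)(\lambda_0-\lambda_{i_0})<\sum_{m}\lambda_{j_m},
\]
the sum running over the $2a+\alpha-1$ indices carrying $b_{j_m}=1$. Grouping these in pairs and using condition (ii) in the form $\lambda_{i_0}+\lambda_{j_m}+\lambda_{j_{m'}}\le\lambda_0$ bounds the right-hand side by $\lceil(2a+\alpha-1)/2\rceil(\lambda_0-\lambda_{i_0})$; since $2a-3\ge\lceil(2a+\alpha-1)/2\rceil$ for every $a\ge 4$ and $0\le\alpha\le 3$, this contradicts the strict inequality, giving $a\le 3$. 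This pairing mirrors the triple-covering idea of part (1), now with the distinguished high-multiplicity index $i_0$ absorbed into each group.
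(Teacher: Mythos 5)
Your proof is correct and follows essentially the same route as the paper: part (1) rests on the estimate $\sum_i(b_i^{+}-1)\le 3(a-3)$ obtained from Lemma 2.6 (i.e.\ $M\ge 9+(\alpha+2g-2)$) combined with the reduced-basis inequality $\lambda_0\ge\lambda_i+\lambda_j+\lambda_k$ to force some $b_i=a-1$, and part (2) is the same triple-grouping that the paper packages as writing $2A+c_1(K_\omega)$ as a nonnegative combination of classes of the form $H-E_i-E_j-E_k$ and $E_i$ (the condition $2a+\alpha-1\le 2(2a-3)$ there is exactly your $2a-3\ge\lceil(2a+\alpha-1)/2\rceil$). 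The only substantive difference is that you make explicit, via the hypersimplex/LP-duality certificate, the step the paper delegates to Lemma 3.6 of \cite{C}.
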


Note that in the above lemma, with $g=0$, the condition $-2\leq \alpha+2g-2\leq 1$ is equivalent to  
$\alpha=0,1,2,3$. On the other hand, we note that when $g=0$ and $a\leq 3$, the class $A$ is automatically in the form specified in (1), i.e., one of the $b_i$-coefficient of $A$ equals $a-1$ and the rest are either $1$ or $0$, even without imposing the area condition $\omega(A)<-c_1(K_\omega)\cdot [\omega]$.

\begin{proof}
Part (1) of the lemma is an extension of Lemma 3.6 in \cite{C}, with the same proof strategy. (We will use the same notations here.) First, the following key estimate continues to hold:
$$
\sum_{i=1}^N(b^{+}_i-1)\leq 3(a-3)
$$ 
where $b^{+}_i:=\max (1,b_i)$. To see this, note that the assumption that 
$-2\leq \alpha+2g-2\leq 1$ implies that Lemma 2.6 is applicable here, and moreover, 
$$
\delta:=\max \{0, \; 1-(\alpha+2g-2)\}=1-(\alpha+2g-2)
$$ 
in Lemma 2.6. With this understood, we have 
$$
M\geq 10-\delta=9+(\alpha+2g-2),
$$
where $M$ is the number of non-zero $b_i$-coefficients in $A$. It follows easily that
$$
\sum_{i=1}^N(b^{+}_i-1)=\sum_{i=1}^N b_i-M=3a+(\alpha+2g-2)-M\leq 3(a-3)
$$
as claimed, where $\sum_{i=1}^N b_i=3a+(\alpha+2g-2)$ by the adjunction formula. We also used the fact that
$b_i\geq 0$ because $a>0$ and $A$ is admissible (cf. Lemma 2.3). 
Now by the same argument as in Lemma 3.6 of \cite{C}, the assumption
$\omega(A)<-c_1(K_\omega)\cdot [\omega]$ implies that there is a $b_i$ such that $b_i=a-1$.
With $(a-1)(a-2)=\sum_{i=1}^N b_i(b_i-1)+2g$, it follows easily that $g=0$, and the rest of the 
$b_i$'s are either $0$ or $1$. The rest of the proof is the same as in Lemma 3.6 of \cite{C}.

For part (2), assume to the contrary that $a\geq 4$. Note that $2\omega(A)<-c_1(K_\omega)\cdot [\omega]$ implies that $\omega(A)<-c_1(K_\omega)\cdot [\omega]$, so that the conclusion of part
(1) of the lemma holds true. With this understood, we note that
$$
2A+c_1(K_\omega)=(2a-3)H-(2a-3)E_{j_1}-E_{j_2}-\cdots- E_{j_{2a+\alpha}}+E_{j_{2a+\alpha+1}}+\cdots+E_{j_N}.
$$
Furthermore, observe that $2a+\alpha-1\leq 2(2a-3)$ as $\alpha\leq 3$ and $a\geq 4$,
which implies that $2A+c_1(K_\omega)$ can be written as a sum of terms of the form 
$H-E_i-E_j-E_k$ or $E_i$. It follows that $2\omega(A)+c_1(K_\omega)\cdot [\omega]\geq 0$,
which is a contradiction.

\end{proof}

\noindent{\bf Proof of Theorem 1.6:}

\vspace{2mm}

To ease the notations, let $I=\{1,2,\cdots,n\}$ be the index set for the index $k$. Furthermore, recall from Section 1 that
$$
C^\ast_0:=\{\vec{\delta}\in \R^n| \delta_k \leq -\sum_{l=1}^n c_l\delta_l, \; \forall k\in I_0\} 
\mbox{ and }
C^\ast_1:=\{\vec{\delta}\in \R^n| 2 \delta_k \leq -\sum_{l=1}^n c_l\delta_l, \; \forall k\in I^\ast\}.
$$
With this understood, 
let $(\vec{v}_k)\in \Omega(D)$, where $\vec{v}_k=(a_k, b_{k1}, b_{k2}, \cdots, b_{kN})$, be an element which is realized under an interior point $\vec{\delta}\in C^\ast_0 \cap C_\delta$. What this means is that there is
an $\omega\in Z(\vec{\delta})$, such that for a reduced basis $H,E_1,E_2,\cdots,E_N$ of $(X,\omega)$, the assignment $F_k\mapsto A_k:=a_kH-\sum_{i=1}^N b_{ki}E_i$ is a homological expression of $D$. 
As $\vec{\delta}\in C^\ast_0 \cap C_\delta$ is an interior point, Lemma 2.15(1) implies that, for any index $k\in I_0$, $a_k\leq \max(3, \frac{1}{2}(N+F_k^2))$ must be true. 

On the other hand, as the $a$-coefficient of $c_1(K_X)$ equals $-3$, it follows easily that
$$
\sum_{k\in I\setminus I_0} -c_k a_k\leq 3+\sum_{k\in I_0} c_k \cdot \max(3,\frac{N+F_k^2}{2}),
$$
as $c_k\geq 0$ for $k\in I_0$. We observe that $c_k<0$ for $k\in I\setminus I_0$, and moreover, 
for any $k$, if $a_k<0$, then $|a_k|\leq \frac{1}{2}(-F_k^2-1)$ (cf. Lemma 2.4), because by assumption, 
$\vec{v}_k=(a_k, b_{k1}, b_{k2}, \cdots, b_{kN})$ is admissible. It follows easily that for each index $k\in I\setminus I_0$, $a_k$ is bounded from above by a constant $C_k>0$ depending only on $N$, the self-intersections of $F_1,F_2,\cdots, F_n$, and the constants $c_1,c_2,\cdots,c_n$. Finally, for each $k\in I_0$, we set $C_k:= \max(3, \frac{1}{2}(N+F_k^2))$. It follows immediately that $(\vec{v}_k)\in \Omega(D, \underline{C})$ where $\underline{C}=(C_k)$. 
This proves the first part of the theorem. 

Next assume that we fix a subset $I^\ast$ such that $I^\ast\subseteq I_1$, and we choose an interior point 
$\vec{\delta}\in C^\ast_1\cap C^\ast_0 \cap C_\delta$. Then it follows easily from Lemma 2.15(2) that $a_k\leq 3$
for any $k\in I^\ast$, so that we can set the bound $C_k:=3$ in $\underline{C}=(C_k)$ for any $k\in I^\ast$. 
This finishes the proof of Theorem 1.6.

\begin{remark}
We note that, in obtaining an estimate for the upper bound $C_k$, $k\in I\setminus I_0$ or $k\in I\setminus I^\ast$, there is at most one $F_k$ (here $k\in I$) such that $a_k<0$ (cf. \cite{C}, Lemma 4.2(1)). On the other hand, depending on the concrete situation, one has the flexibility to impose some extra constraint 
$C^\ast_1=\{\vec{\delta}\in \R^n| 2\delta_k \leq -\sum_{l=1}^n c_l\delta_l, \; \forall k\in I^\ast\}$ in selecting the area
vector $\vec{\delta}$, for some chosen subset $I^\ast\subseteq I_1$, in order to improve the upper bound $C_k$ to $C_k=3$ for $k\in I^\ast$.
\end{remark}

\section{Symplectic Cremona transformations}

\subsection{Successive symplectic blowing-down revisited}
Suppose $D=\cup_{k=1}^n F_k$ is a symplectic configuration in $(X_N,\omega_N)$, where $X_N=\C\P^2\# N\overline{\C\P^2}$. 

Let $F_k\mapsto A_k=a_k H-\sum_{i=1}^N b_{ki} E_i$ be a given homological expression of $D$, where $H,E_1,E_2,\cdots, E_N$ is a reduced basis of 
$(X_N,\omega_N)$. In \cite{C1} we introduced a successive symplectic blowing-down procedure, which, under suitable assumptions on the homological expression $F_k\mapsto A_k=a_k H-\sum_{i=1}^N b_{ki} E_i$ and the symplectic structure $\omega_N$, successively and symplectically blows down $X_N$ to $X_1=\C\P^2\#\overline{\C\P^2}$, and under
additional assumptions, further blows down $X_1$ to $\C\P^2$: 
$$
(X_N,\omega_N)\rightarrow (X_{N-1},\omega_{N-1})\rightarrow \cdots \rightarrow (X_m,\omega_m)
\rightarrow \cdots 
$$
(We shall say that the procedure is at stage $m$ if we reach $(X_m,\omega_m)$ under the successive blowing-down.) In the process, it transforms the configuration $D$ into a so-called {\bf symplectic arrangement} $\hat{D}$ in $X_1=\C\P^2\#\overline{\C\P^2}$ or $\C\P^2$, where $\hat{D}$ is a union of pseudoholomorphic curves, whose singularities and intersection pattern are canonically determined by the homological expression $F_k\mapsto A_k=a_k H-\sum_{i=1}^N b_{ki} E_i$.  Furthermore, this procedure is reversible, meaning that there is a successive blowing-up procedure with reversing order, which recovers the configuration $D$ from the symplectic arrangement $\hat{D}$ up to a smooth isotopy. We remark that even though this is purely a symplectic operation and there is no holomorphic analog of it, in analogy if the successive blowing-down of $X_N$ to $X_1=\C\P^2\#\overline{\C\P^2}$ or $\C\P^2$ were given by a birational morphism and $D$ is a configuration of irreducible curves in $X_N$, $\hat{D}$ would correspond to the direct image of $D$. (Compare the proof of Lemma 2.3(1) and (2).)


First, we shall give an overview of the procedure, explaining its main points and features. The starting point is the fact that the configuration $D$ and its descendant at each stage of the blowing-down can be made $J$-holomorphic for some compatible almost complex structure $J$, while for each 
$2\leq m\leq N$, the class $E_m$ at stage $m$ can always be represented by a $J$-holomorphic 
$(-1)$-sphere for any given $J$. (Here to include the case of $m=2$, we have to impose a technical condition that the symplectic structure $\omega_N$ is odd, meaning that the area $\omega_N(H-E_1-2E_2)\geq 0$.) The main issue is how to construct the descendant of $D$ at the next stage
after blowing down $E_m$. 

To explain this, we let $D_m\subset X_m$ be the descendant of $D$ at stage $m$, which is $J_m$-holomorphic, and $C_m$ be the $J_m$-holomorphic $(-1)$-sphere representing $E_m$. Recall from \cite{C1} that
in order to blow down $(X_m,\omega_m)$ symplectically, we cut $X_m$ open along $C_m$ and insert a standard symplectic $4$-ball of appropriate size, to be denoted by $B(\hat{E}_m)$ where 
$\hat{E}_m$ stands for the center of the ball. With this understood, constructing
the descendant of $D$ at the next stage, i.e., stage $m-1$, boils down to the question of 
how to extend $D_m\setminus C_m$ across the $4$-ball $B(\hat{E}_m)$, as $D_m$ may intersect $C_m$, and the answer depends on whether $C_m$ is part of $D_m$ or not.

If $C_m$ is not part of $D_m$, we shall slightly perturb $C_m$ if necessary ($C_m$ continues to be a smoothly embedded symplectic $(-1)$-sphere), so that it intersects $D_m$ only at its nonsingular locus, with transverse and positive intersections. With this understood, we extend $D_m\setminus C_m$ to $B(\hat{E}_m)$ by adding to each puncture of $D_m\setminus C_m$ a complex linear disk in 
the $4$-ball $B(\hat{E}_m)$. (Note that the disks only intersect at the center $\hat{E}_m$.)

Now assume $C_m$ is part of $D_m$. Note that this occurs if and only if there is one and unique 
component of $D$, denoted by $S$, whose homological expression takes the form 
$$
S=E_m-E_{l_1}-E_{l_2}-\cdots-E_{l_\alpha}, \mbox{ where $m<l_s$ for all $s$}. 
$$
(We call $E_m$ the {\bf leading class} of $S$.) In this case, we can no longer perturb $C_m$ before blowing it down, in order for this procedure to be reversible. With this understood, it is necessary 
that $D_m$ is described by a certain symplectic model near each intersection point of $C_m$ with other components of $D_m$. More concretely, let $x$ be such an intersection point. Then the 
model is as follows: in a Darboux neighborhood of 
$(X_m,\omega_m)$ centered at $x$, there are complex linear coordinates $w_1,w_2$, such that $C_m$ is given by $w_2=0$ and any other component of $D_m$ is given by one of the following equations, $w_1=0$, or $w_2=aw_1$ for some $0\neq a\in\C$, or $w_2^p=aw_1^q$ where $0\neq a\in\C$ and $pq>1$. With this understood, the extension of the corresponding component of $D_m\setminus C_m$ in the $4$-ball $B(\hat{E}_m)$, after blowing down $C_m$, is given, respectively, by $z_1=0$, or $z_1=bz_2^2$ for some $0\neq b\in\C$, or $z_1^q=bz_2^{p+q}$ for some $0\neq b\in\C$, where $z_1,z_2$ are some complex linear coordinates on the $4$-ball $B(\hat{E}_m)$ (cf. \cite{C1}, Lemma 4.4). We remark that the equations of type $w_2^p=aw_1^q$, where $pq>1$, are not preserved under a general linear transformation of $w_1,w_2$, so in general, in the symplectic model above, the axes $w_1=0$, $w_2=0$ (resp. $z_1=0$, $z_2=0$) are uniquely determined up to order. 

With the preceding understood, suppose there is a component of $D$, called $\tilde{S}$, which has zero $a$-coefficient and contains the class $E_m$ in its homological expression. It is easy to see
that the descendant of $\tilde{S}$ in $D_m$ must intersect the $4$-ball $B(\hat{E}_m)$ in a complex linear disk. On the other hand, if $E_{\tilde{m}}$ is the leading class of $\tilde{S}$, then we note that 
$\tilde{m}<m$, and at the stage $\tilde{m}$ of the blowing-down, the $(-1)$-sphere $C_{\tilde{m}}$
representing the class $E_{\tilde{m}}$ will be part of $D_{\tilde{m}}$; in fact, it is the descendant of
$\tilde{S}$ in $D_{\tilde{m}}$. In order to apply the aforementioned symplectic model when we blow down the $(-1)$-sphere $C_{\tilde{m}}$, it is clear that the descendant of $\tilde{S}$ in $D_m$ has to be given by one of the two axes $z_1=0$, $z_2=0$ in $B(\hat{E}_m)$. With this understood, one can show that there are at most two such components $\tilde{S}$ for each given $S$ (cf. \cite{C1}, Lemma 4.5). Nevertheless, this requirement puts certain restrictions on how other components of $D_m$ are allowed to intersect $C_m$. Under the following assumptions (for each given $S$, and assuming
$\omega_N$ is odd), it is shown in \cite{C1} that one can maintain the symplectic models at each stage of the blowing-down, and as a result, successively blows down
$X_N$ to $X_1=\C\P^2\#\overline{\C\P^2}$:
\begin{itemize}
\item [{(a)}] Suppose there are two symplectic spheres $S_1,S_2\subseteq D$ whose $a$-coefficients equal zero and whose homological expressions contain the leading class $E_m$ of $S$. Then for any class $E_{l_s}$ which appears in $S$, but appears in neither $S_1$ nor $S_2$, there is at most one component $F_k$ of $D$ other than $S$, whose homological expression contains $E_{l_s}$ with
$F_k\cdot E_{l_s}=1$.
\item [{(b)}] Suppose there is only one symplectic sphere $S_1\subseteq D$ whose $a$-coefficient 
equals zero and whose homological expression contains the leading class $E_m$ of $S$. 
Then there is at most one class $E_{l_s}$ in $S$, which does not appear in $S_1$, but either appears in the expressions of more than one components $F_k\neq S$, or appears in the expression of only one component $F_k\neq S$ but with $F_k\cdot E_{l_s}>1$.
\end{itemize}
Furthermore, under one of the following additional assumptions, one can further blow down the class $E_1$ and reduce $X_1=\C\P^2\#\overline{\C\P^2}$ to $\C\P^2$:
\begin{itemize}
\item [{(c)}] The classes $E_1,E_2$ have the same area, i.e., $\omega_N(E_1)=\omega_N(E_2)$.
\item [{(d)}] The class $E_1$ is the leading class of a component of $D$. 
\item [{(e)}] There is a component of $D$ with homological expression $aH-b_1E_1-\sum_{i>1}b_i E_i$ where $2b_1<a$.
\end{itemize}
(See \cite{C1}, Theorem 4.3.) End of the review. 

\vspace{2mm}

The successive blowing-up procedure, which recovers $D$ from $\hat{D}$ up to a smooth isotopy, is
based on the following construction, adapted from \cite{CFM} (see also \cite{C3}).

\begin{lemma}
Let $(M,\omega)$ be a symplectic $4$-manifold, $D$ be a union of $J$-holomorphic curves in $M$ where $J$ is $\omega$-compatible. Let $p\in D$, and suppose in a neighborhood $U$ of $p$, $J$ is
integrable and $\omega$ is K\"{a}hler. Let $\pi: \tilde{M}\rightarrow M$ be the complex blow-up at $p$ defined using the complex structure $J$ near $p$. Denote by $C\subset \tilde{M}$ the exceptional 
$(-1)$-sphere and by
$\tilde{D}$ the proper transform of $D$ in $\tilde{M}$. Then there exist a symplectic structure 
$\tilde{\omega}$ on $\tilde{M}$ and a $\tilde{\omega}$-compatible almost complex structure $\tilde{J}$ such that (i) $\tilde{J}$ is integrable and $\tilde{\omega}$ is K\"{a}hler near the exceptional sphere $C$, (ii) $(\tilde{\omega},\tilde{J})=(\omega,J)$ on $\tilde{M}\setminus \pi^{-1}(U)$, and (iii) 
$C$ and $\tilde{D}$ are $\tilde{J}$-holomorphic. Moreover, the area of the exceptional $(-1)$-sphere, 
$\tilde{\omega}(C)$, can take any value which is sufficiently small. 
\end{lemma}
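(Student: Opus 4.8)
The plan is to reduce the statement to a purely local Kähler construction on a small ball around $p$ and then glue. Since $J$ is integrable and $\omega$ is Kähler on $U$, I would first choose $J$-holomorphic coordinates on a ball $B\subset U$ centered at $p$, identifying $(B,J)$ with a ball in $\C^2$ carrying the standard complex structure $J_0$, and $\omega|_B$ with a Kähler form. Because $D$ is $J$-holomorphic and $J$ is integrable near $p$, the set $D\cap B$ is a complex-analytic curve in this chart. I then perform the genuine complex blow-up $\beta:\widetilde B\to B$ at the origin, with exceptional curve $C$ and $\tilde J_0:=\beta^\ast J_0$; blowing up an integrable structure yields an integrable structure, $C$ is a complex submanifold, and the proper transform of the analytic curve $D\cap B$ is $\tilde J_0$-holomorphic.

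Next I would fix the almost complex structure globally. Away from $C$ the map $\beta$ is a biholomorphism onto $B\setminus\{p\}$, so $\tilde J_0=J_0=J$ there; hence setting $\tilde J:=\tilde J_0$ on $\pi^{-1}(U)$ and $\tilde J:=J$ on $\tilde M\setminus\pi^{-1}(U)$ defines a single almost complex structure on $\tilde M$, integrable on all of $\pi^{-1}(U)$ and in particular near $C$. With $\tilde J$ in hand, condition (iii) is essentially free: $C$ is the exceptional divisor, hence $\tilde J_0$-holomorphic, and $\tilde D$ is the proper transform, which is $\tilde J_0$-holomorphic inside $\pi^{-1}(U)$ and equals the $J$-holomorphic $D$ outside. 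Likewise (ii) will hold by construction, since nothing is altered outside $\pi^{-1}(U)$.

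The main work is the construction of the symplectic form $\tilde\omega$. On $B$ write $\omega=\frac{i}{2}\partial\bar\partial\phi$ for a smooth strictly plurisubharmonic potential $\phi$. The pullback $\beta^\ast\omega=\frac{i}{2}\partial\bar\partial(\phi\circ\beta)$ is a smooth closed $(1,1)$-form for $\tilde J_0$, positive off $C$ but degenerate along $C$, where its rank drops in the fiber directions of $\beta$. I would remedy this by adding a small exact $(1,1)$-form concentrated near $C$: choose blow-up coordinates $(u,v)$ with $C=\{u=0\}$, and a smooth function $f$ on $\widetilde B$ that coincides near $C$ with the Fubini--Study potential $\log(1+|v|^2)$, whose complex Hessian is positive in the directions along $C$, and that vanishes near $\partial\widetilde B$. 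Then set $\tilde\omega:=\beta^\ast\omega+\epsilon\,\frac{i}{2}\partial\bar\partial f$ for a small $\epsilon>0$. By construction $\tilde\omega$ is a closed $(1,1)$-form for $\tilde J_0$; it equals $\beta^\ast\omega$ near $\partial\widetilde B$, so via $\beta$ it glues with $\omega$ across the overlap annulus to a single closed $2$-form on $\tilde M$; and near $C$ the added term contributes positivity in exactly the degenerate directions.

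The only genuinely analytic point, and the step where I expect the real effort to lie, is verifying that $\tilde\omega$ is \emph{positive} on all of $\widetilde B$, hence a symplectic form that is Kähler for the integrable $\tilde J$ near $C$. Near $C$ the semipositive $\beta^\ast\omega$ plus the strictly $C$-positive $\epsilon$-term is positive; near $\partial\widetilde B$ one has $\tilde\omega=\beta^\ast\omega>0$; the delicate region is the transition annulus, where the cross terms arising from the derivatives of the cutoff in $f$ are uncontrolled in sign. The key observation is that on this fixed compact annulus $\beta^\ast\omega$ is uniformly positive definite while $\frac{i}{2}\partial\bar\partial f$ is uniformly bounded, so for $\epsilon$ below a threshold depending only on these fixed data the bounded perturbation cannot destroy positivity. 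This positivity-for-small-$\epsilon$ mechanism is the heart of the symplectic blow-up. With $\epsilon$ so chosen, conditions (i)--(iii) all follow, and $\tilde\omega$-compatibility of $\tilde J$ is automatic, since the two local descriptions agree on the overlap, where $\tilde J=J$ and $\tilde\omega=\omega$.
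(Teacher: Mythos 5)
There is a genuine gap, and it sits exactly at the point you identified as ``the heart'' of the construction. Your perturbation $\epsilon\,\frac{i}{2}\partial\bar\partial f$, with $f$ a globally defined smooth function on $\widetilde B$, is an exact $2$-form, so $\int_C \epsilon\,\frac{i}{2}\partial\bar\partial f=0$ by Stokes; since $\beta$ collapses $C$ to the point $p$, also $\int_C\beta^\ast\omega=0$, hence $\int_C\tilde\omega=0$. A form with zero integral over the $\tilde J_0$-holomorphic sphere $C$ cannot tame $\tilde J_0$ along $C$, let alone be K\"ahler there, so your $\tilde\omega$ fails (i) and fails to be compatible with $\tilde J$ precisely where it matters. (A symptom of the same problem: $\log(1+|v|^2)$ is the Fubini--Study potential in one affine chart only and blows up at $v=\infty\in C$, so there is no smooth $f$ on $\widetilde B$ coinciding with it ``near $C$''.) The obstruction is cohomological and cannot be cut off away: a closed, semi-positive $(1,1)$-form $\eta$ supported in the interior of $\pi^{-1}(U)$ with $\int_C\eta>0$ does not exist, because writing $[\eta]=k\,\mathrm{PD}[C]$ in compactly supported cohomology gives $\int_C\eta=-k$ while integrating over a fiber $F$ of $\mathcal{O}(-1)$ (a properly embedded holomorphic curve with $[C]\cdot[F]=1$) gives $\int_F\eta=k\geq 0$, a contradiction.

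This is why the paper's proof takes a different route: it fixes a K\"ahler form $\Omega$ on $\pi^{-1}(U)$, writes $\Omega=d\gamma$ on $\pi^{-1}(U)\setminus C$, and cuts off the \emph{primitive}, setting $\tilde\omega=\pi^\ast\omega+\epsilon\,d(\rho\gamma)$. The resulting form has the required nonzero class on $C$ and equals $\pi^\ast\omega$ near $\partial\pi^{-1}(U)$, but in the transition annulus $d\rho\wedge\gamma$ is not of type $(1,1)$, so $J_0$ is only $\tilde\omega$-tame there and compatibility is \emph{not} automatic. The paper must then construct $\tilde J$ by symmetrizing $\tilde\omega(\cdot,J_0\cdot)$ into a metric and, because the resulting compatible structure need not preserve $T\tilde D$, first defining $\tilde J$ along $\tilde D$, extending to its normal bundle, and interpolating metrics. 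Your proposal is missing this entire step; the assertion that ``$\tilde\omega$-compatibility of $\tilde J$ is automatic'' is exactly the part of the lemma that requires work once the perturbation is corrected to one that can actually be symplectic near $C$.
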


\begin{proof}
Without loss of generality, we assume $U$ is a small ball centered at $p$, such that $D$ is embedded in $U\setminus \{p\}$. We let $J_0$ be the complex structure on $\pi^{-1}(U)$ (note that $J_0=J$ on
$\pi^{-1}(U)\setminus C$), and we fix a K\"{a}hler
form $\Omega$ on it. Then note that there is a $1$-form $\gamma$ on $\pi^{-1}(U)\setminus C$
such that $\Omega=d\gamma$. We pick a cut-off function $\rho$ on $\pi^{-1}(U)$,
which equals $1$ in a neighborhood $V$ of $C$ and equals $0$ near the boundary of $\pi^{-1}(U)$, and let $\epsilon>0$ be sufficiently small. Then on $\pi^{-1}(U)$, we define $\tilde{\omega}$ as follows:
$\tilde{\omega}=\pi^\ast\omega+\epsilon \Omega$ on $V$, which is K\"{a}hler with respect to
$J_0$, and $\tilde{\omega}:=\pi^\ast\omega+\epsilon d(\rho\gamma)$ on $\pi^{-1}(U)\setminus V$,
which is symplectic (as $\epsilon$ is sufficiently small) 
and equals $\pi^\ast\omega$ near the boundary of $\pi^{-1}(U)$, hence extends 
naturally to a symplectic structure on $\tilde{M}$, equalling $\omega$ on 
$\tilde{M}\setminus \pi^{-1}(U)$. 

To define $\tilde{J}$, we note that over the region where $\rho$ is non-constant, 
$J_0$ is $\tilde{\omega}$-tame and $\tilde{\omega}|_D>0$, and $D$ is embedded 
in $U\setminus \{p\}$. Let $h_0$ be the K\"{a}hler metric associated to $\tilde{\omega}$ and
$J_0$ whenever $J_0$ is $\tilde{\omega}$-compatible, and define $h$ by 
$h(X,Y):=\frac{1}{2}(\tilde{\omega}(X,J_0Y)+\tilde{\omega}(Y,J_0X))$. Then $h$ is a metric and 
$h=h_0$ whenever $h_0$ is defined. In particular, the $\tilde{\omega}$-compatible almost complex
structure determined by the metric $h$ equals $J_0$ whenever $J_0$ is $\tilde{\omega}$-compatible.
The problem is that the tangent bundle $TD$ may not be invariant under it. To deal with this issue,
we first define $\tilde{J}$ as an $\tilde{\omega}$-compatible almost complex structure on $D$ using the metric $h$, then extends it to the normal bundle of $D$ (still $\tilde{\omega}$-compatible) 
such that it equals $J_0$ outside a neighborhood of $\text{supp }\rho^\prime$. Let $h^\prime$ be the metric along $D$ defined by $h^\prime(X,Y):=\tilde{\omega}(X, \tilde{J}Y)$, and let $\tilde{h}$ be a metric which is an interpolation of $h^\prime$ and $h$. Then we can extends $\tilde{J}$ to the rest 
of $\pi^{-1}(U)$ using the 
$\tilde{\omega}$-compatible almost complex structure determined by the metric $\tilde{h}$. It is clear
that the conditions (i)-(iii) are satisfied by $(\tilde{\omega},\tilde{J})$. Finally, note that $\tilde{\omega}(C)=
\epsilon \Omega(C)$, which can take any value which is sufficiently small.

\end{proof}

With the preceding understood, we shall next remove the condition that $\omega_N$ is odd from
the assumptions of the successive blowing-down, by introducing a modified version of the assumptions (a) and (b). Recall that $\omega_N$ is odd if $\omega_N(H-E_1-2E_2)\geq 0$,
which means that when $\omega_N$ is even (i.e. not odd), the $(-1)$-class $H-E_1-E_2$
has the minimal area among the three classes $E_1$, $E_2$, and $H-E_1-E_2$. 

To state the modified version of (a) and (b), we first note that there is at most one component 
$F_k$ of $D$ with the following significance: the $a$-coefficient of $F_k$ equals $1$ and its homological expression contains both classes $E_1$ and $E_2$ (this is because $F_k\cdot F_l\geq 0$ for $k\neq l$). We shall denote such a component of $D$ by $\Sigma_0$ if it exists. With this understood, we observe that the same argument for the proof of Lemma 4.5 in \cite{C1} shows that there are at most two components $F_k$ of $D$ such that the leading class $E_m$ of $S$ is contained in the homological expression of $F_k$ and either $F_k=\Sigma_0$ or $F_k$ has 
$a$-coefficient $0$. With this understood, here is the modified version of (a) and (b).

\begin{itemize}
\item [{(a')}] Suppose there are two components $S_1,S_2\subset D$ where the homological expressions of $S_1$, $S_2$ contain the leading class $E_m$ of $S$ and either $S_1$, $S_2$ are 
$\Sigma_0$ and a symplectic sphere whose $a$-coefficient equals zero or both $S_1$, $S_2$ are 
a symplectic sphere whose $a$-coefficient equals zero. Then for any class $E_{l_s}$ which appears in $S$, but appears in neither $S_1$ nor $S_2$, there is at most one component $F_k$ of $D$ other than $S$, whose homological expression contains $E_{l_s}$ with $F_k\cdot E_{l_s}=1$.
\item [{(b')}] Suppose there is only one component $S_1\subset D$ where the homological expression of $S_1$ contains the leading class $E_m$ of $S$ and either $S_1=\Sigma_0$ or $S_1$ is a symplectic sphere with $a$-coefficient $0$. Then there is at most one class $E_{l_s}$ in the homological expression of $S$, which does not appear in $S_1$, but either appears in the expressions of more than one components $F_k\neq S$, or appears in the expression of only one component $F_k\neq S$ but with $F_k\cdot E_{l_s}>1$.
\end{itemize}

Note that the assumptions (a') and (b') imply the assumptions (a) and (b).

\begin{lemma}
Theorem 4.3 of \cite{C1} continues to be true without the assumption that $\omega_N$ is odd if either
$E_2$ is the leading class of a component of $D$ or the assumptions (a) and (b) are replaced by (a') and (b'). 
\end{lemma}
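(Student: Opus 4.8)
The plan is to isolate the unique step in the proof of Theorem 4.3 of \cite{C1} at which the oddness of $\omega_N$ is used, and to supply a replacement under each of the two stated hypotheses. As recalled in the review above, oddness is invoked only at the final blow-down, from stage $m=2$ to stage $m=1$: it guarantees that $E_2$, regarded in $X_2=\C\P^2\#2\overline{\C\P^2}$, is represented by a $J$-holomorphic $(-1)$-sphere for the $J$ making the descendant $D_2$ holomorphic. Every stage $m\geq 3$, the reversibility furnished by Lemma 3.1, and the reduction of $X_1$ to $\C\P^2$ under (c)--(e) are independent of parity, so they persist verbatim; and when $\omega_N$ is odd the original argument applies directly, using that (a'), (b') imply (a), (b). Hence it suffices to carry out the blow-down of $E_2$ when $\omega_N$ is even, in which case $H-E_1-E_2$ is the minimal-area class among $E_1,E_2,H-E_1-E_2$.

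Under the first hypothesis, $E_2$ is the leading class of a component $S$ of $D$. After the parity-independent blow-downs of $E_N,\dots,E_3$, the descendant of $S$ in $D_2$ is exactly a $(-1)$-sphere in the class $E_2$; thus $C_2$ is part of $D_2$ and is automatically $J$-holomorphic, with no appeal to a minimal-area representability result. One would then run the ``$C_m$ is part of $D_m$'' branch of the procedure to symplectically blow down $E_2$, the symplectic models at the intersection points of $C_2$ with the remaining components being governed by the unchanged assumptions (a), (b) and reversibility being supplied by Lemma 3.1, reaching $X_1=\C\P^2\#\overline{\C\P^2}$.

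Under the second hypothesis $E_2$ need not be a leading class, and two points must be addressed. For representability I would use the auxiliary $\s^2\times\s^2$ model of the proof of Lemma 2.3(2): temporarily blowing down the minimal sphere in class $H-E_1-E_2$ exhibits $X_2$ as a one-point blow-up of $\s^2\times\s^2$, whose ruling by the minimal class $e_2$ passes a unique fiber through the blow-up center $\hat E$; its proper transform is an embedded $(-1)$-sphere $C_2$ in the class $E_2$ (since $\pi^\ast e_2-[H-E_1-E_2]=E_2$), which, for a suitable compatible $J$, is the representative we blow down through the ``perturb'' branch, as $E_2$ is not a leading class and so $C_2$ is not part of $D_2$. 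Second, the even-ness forces the component $\Sigma_0$ (the unique component of $a$-coefficient $1$ containing both $E_1$ and $E_2$, when present) to behave, near each inserted ball, like an $a$-coefficient-$0$ component carrying the leading class, because its defining part $H-E_1-E_2$ is now minimal and its descendant must occupy a coordinate axis. Adjoining $\Sigma_0$ to the set of such special components — which by the observation preceding (a'), (b') has at most two elements — is exactly the passage from (a), (b) to (a'), (b'); I would then re-run the count in the proof of Lemma 4.5 and the maintenance of symplectic models in Theorem 4.3 of \cite{C1} with this enlarged set.

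The main obstacle is this last verification in the even case: establishing that (a'), (b'), with $\Sigma_0$ now counted among the special components, force every remaining component of $D_2$ to meet $C_2$ in one of the standard models $w_2=aw_1$, $w_2^p=aw_1^q$, so that the blow-down of $E_2$ is reversible via Lemma 3.1; and, as part of this, that the $E_2$-representative extracted from the $\s^2\times\s^2$ model can be taken $J$-holomorphic for a $J$ keeping $D_2$ holomorphic. Once these are checked, the remainder of Theorem 4.3 of \cite{C1} carries over unchanged.
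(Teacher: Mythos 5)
Your outline follows the same route as the paper: isolate the even case at stage $2$, use the descendant of the component with leading class $E_2$ when it exists, and otherwise blow down the minimal-area class $H-E_1-E_2$ to reach $\s^2\times\s^2$ and extract an $E_2$-representative from the ruling through the blow-up center. However, the step you flag as the ``main obstacle'' is left open, and it is also slightly misplaced. The symplectic models governed by (a$'$), (b$'$) are not needed for blowing down $E_2$ itself: when $E_2$ is not a leading class, its representative $C_2'$ is not a component of $D_2$ and can simply be perturbed to meet $D_2$ transversely. What (a$'$), (b$'$) actually control is the blow-down of the $(-1)$-sphere $C$ in the class $H-E_1-E_2$, which \emph{is} a component of $D_2$ exactly when $\Sigma_0$ exists and hence cannot be perturbed; adjoining $\Sigma_0$ to the special components is what lets the models be maintained at the intersections of $C$ with the rest of $D_2$ throughout the earlier stages. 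Your second worry --- that the $E_2$-representative from the $\s^2\times\s^2$ model be $J$-holomorphic for a $J$ keeping $D_2$ holomorphic --- is resolved in the paper not by adjusting $J_2$ but by applying Lemma 3.1 to blow up $\hat{X}$ holomorphically at the center $p$, producing a pair $(X_2',D_2')$ with an honest $J_2'$-holomorphic $(-1)$-sphere $C_2'$ in the class $E_2$, and then \emph{replacing} $(X_2,D_2)$ by the smoothly identified $(X_2',D_2')$; without this substitution the representability claim for the original $J_2$ does not follow.

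One further inaccuracy: the reduction of $X_1$ to $\C\P^2$ under (c)--(e) does not persist ``verbatim'' in the even case. Cases (d) and (e) do, but case (c) relies on $E_1$ and $E_2$ both having minimal area, which fails when $\omega_N$ is even; the paper reruns the $\s^2\times\s^2$ argument, applying Lemma 2.4 of \cite{C2} to the class $e_1$ as well to produce a second sphere $\hat{C}_1$ through $p$ whose proper transform is a $(-1)$-sphere in the class $E_1$ disjoint from $C_2'$, and blows both down simultaneously. With these three points supplied, your argument matches the paper's.
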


\begin{proof}
For simplicity, we shall first consider the case where the component $\Sigma_0$ does not exist in $D$. It is easy to see that in this case, the assumptions (a') and (b') boil down to (a) and (b), and we can simply proceed as in \cite{C1}, until we reach the stage $X_2=\C\P^2\# 2\overline{\C\P^2}$ of the successive blowing-down. We need to explain how to blow down the class $E_2$ 
when $\omega_N$ is even. (We shall continue to use the notations introduced in \cite{C1}, 
Section 4.)

Recall that the descendant $D_2$ of $D$ in $(X_2,\omega_2)$ is a union of $J_2$-holomorphic curves, where $J_2$ is some $\omega_2$-compatible almost complex structure. The key issue for blowing down the class $E_2$ is to represent it by a $J_2$-holomorphic $(-1)$-sphere (note that $E_2$ does not have the minimal area as 
$\omega_N$ is even, so the relevant result in \cite{KK} does not apply here). 
Once this is achieved, the rest is the same as in \cite{C1}. With this understood, if $E_2$ is the leading class of a component $S$ of $D$, then the descendant of $S$ in $D_2$ is a $J_2$-holomorphic 
$(-1)$-sphere representing $E_2$, and we are done in this case. 

Assuming $E_2$ is not the leading class of any component of $D$, we shall proceed as follows. First, since the class $H-E_1-E_2$ has the minimal area (as $\omega_N$ is even), we can represent it by a $J_2$-holomorphic 
$(-1)$-sphere $C$ (cf. \cite{KK}). Since we assume that $\Sigma_0$ does not exist in $D$, it follows easily that $C$ is not a component of $D_2$. With this understood, we can perturb $C$ slightly so that it intersects with each component of $D_2$ transversely and positively, and remains to be a symplectic $(-1)$-sphere. We symplectically blow down $X_2$ along $C$, and denote the resulting symplectic $4$-manifold by $(\hat{X},\hat{\omega})$. As we have already seen in the proof of Lemma 2.3, $\hat{X}$ is diffeomorphic to $\s^2\times \s^2$. Note that $D_2$ descends to
a union of $\hat{J}$-holomorphic curves in $(\hat{X},\hat{\omega})$, which is denoted by $\hat{D}$. 
Let $B(p)$ be the standard symplectic $4$-ball in $(\hat{X},\hat{\omega})$ resulted from
the symplectic blowing-down, with its center denoted by $p$.

As in the proof of Lemma 2.3, let $e_1, e_2\in H^2(\hat{X})$ be the descendant of $E_1,E_2$ respectively. Applying Lemma 2.4 of \cite{C2} to the class $e_2$, and with $\hat{\omega}(e_2)\leq \hat{\omega}(e_1)$, it follows easily that $e_2$ is represented by a $\hat{J}$-holomorphic sphere. 
In fact, $\hat{X}$ is foliated by a $\s^2$-family of such $\hat{J}$-holomorphic spheres. We denote 
by $\hat{C}_2$ the one which passes through the point $p$, i.e., the center of the symplectic 
$4$-ball $B(p)$ in $\hat{X}$. 

Now we apply Lemma 3.1 and holomorphically blow up $\hat{X}$ at $p$. We denote by $X_2^\prime$ the resulting manifold, and $J_2^\prime$ the almost complex structure. We let $D_2^\prime$ denote the proper transform of $\hat{D}$ in $X_2^\prime$, which is $J_2^\prime$-holomorphic. As we pointed out earlier, one can naturally identify $(X_2,D_2)$ with 
 $(X_2^\prime,D_2^\prime)$ smoothly. With this understood, we shall replace $(X_2,D_2)$ by
 $(X_2^\prime,D_2^\prime)$ in our argument. As a consequence, if we let $C_2^\prime$ be the proper transform of $\hat{C}_2$ in $X_2^\prime$, then $C_2^\prime$ is the $J_2^\prime$-holomorphic
$(-1)$-sphere representing the class $E_2$ that we are looking for. 

Under one of the additional assumptions (c), (d), or (e), one can further blow down $X_1$ to $\C\P^2$, in analogy to \cite{C1}. More concretely, the cases (d) and (e) are the same as in \cite{C1}; for case (c) where the classes $E_1$ and $E_2$ have the same area, we note that 
$\hat{\omega}(e_1)= \hat{\omega}(e_2)$, so that we can apply Lemma 2.4 of \cite{C2} to the class $e_1$ as well. This gives rise to a $\hat{J}$-holomorphic sphere $\hat{C}_1$ representing the class $e_1$ which contains the point $p$. The proper transform of $\hat{C}_1$ in $X_2^\prime$ is a $J_2^\prime$-holomorphic $(-1)$-sphere, denoted by $C_1^\prime$, which represents the class $E_1$. The $(-1)$-spheres $C_1^\prime$, $C_2^\prime$ are disjoint, so they can be blown down at the same time to reach the final stage $\C\P^2$. 

In the case where the component $\Sigma_0$ does exist in $D$, the idea of the proof is the same, with the argument slightly modified. More precisely, since $\Sigma_0$ is a component of $D$, the $J_2$-holomorphic $(-1)$-sphere $C$ which represents the class $H-E_1-E_2$ will be part of $D_2$, hence we can no longer perturb $C$ before blowing it down. However, the assumptions (a') and (b') ensure that we can still blow down $X_2$ along $C$, in the fashion explained in \cite{C1}, to reach 
to $\hat{X}=\s^2\times\s^2$. Let $\hat{D}$ be the descendant of $D$ in $\hat{X}$. We apply 
Lemma 3.1 to recover $(X_2,D_2)$ from $(\hat{X},\hat{D})$ in the fashion we explained in the earlier case where
$\Sigma_0$ does not exist. 
With this understood, the rest of the proof is the same as in the earlier case. 

\end{proof}

\subsection{A partial order of infinitely-nearness} 
For the rest of this section, we will focus on the case where the final stage of the 
successive blowing-down is $\C\P^2$. We will denote the symplectic structure on $\C\P^2$ by $\hat{\omega}$.
Then the symplectic arrangement $\hat{D}$ is a union of $\hat{J}$-holomorphic curves for some 
$\hat{\omega}$-compatible almost complex structure $\hat{J}$. (Both $\hat{\omega}$
and $\hat{J}$ are naturally resulted from the successive blowing-down, cf. \cite{C1}.)

The successive blowing-up procedure, which recovers $D$ from $\hat{D}$ up to a smooth isotopy,  
is simply an application of Lemma 3.1 at the points $\hat{E}_i$, $1\leq i\leq N$, successively and 
in a reversing order. (As for the notation, recall that $\hat{E}_i$ is the center of the standard symplectic $4$-ball $B(\hat{E}_i)$ equipped with the standard complex structure, which is inserted into $X_i\setminus C_i$ when we blow down $X_i$ along the $(-1)$-sphere $C_i$ representing the class $E_i$; in particular, 
$\hat{E}_i\in B(\hat{E}_i)\subset X_{i-1}$.)

More concretely, we apply Lemma 3.1 to $(\C\P^2,\hat{\omega})$ at the point
$\hat{E}_1$. We denote the resulting blow-up manifold by $(\tilde{X}_1,\tilde{\omega}_1)$ and the
$\tilde{\omega}_1$-compatible almost complex structure by $\tilde{J}_1$, and let $C_1$ be the
exceptional $(-1)$-sphere in $\tilde{X}_1$. With this understood, we define $\tilde{D}_1\subset 
\tilde{X}_1$ to be the proper transform of $\hat{D}$ if $E_1$ is not the leading class of a component of $D$, and define $\tilde{D}_1$ to be the union of $C_1$ with the proper transform 
of $\hat{D}$ if $E_1$ is the leading class of a component of $D$. After identifying $(X_1,D_1)$ with
$(\tilde{X}_1,\tilde{D}_1)$ smoothly, we apply Lemma 3.1 to $(\tilde{X}_1,\tilde{D}_1)$ at the point
$\hat{E}_2$, and define $(\tilde{X}_2,\tilde{\omega}_2)$, $\tilde{J}_2$, and $\tilde{D}_2$ in the same fashion. Inductively, we obtain $(\tilde{X}_N,\tilde{\omega}_N)$, $\tilde{J}_N$, and $\tilde{D}_N$.
For simplicity, we shall write $\tilde{J}$, $\tilde{D}$ for $\tilde{J}_N$, $\tilde{D}_N$. Then 
$(X_N,D)$ can be identified with $(\tilde{X}_N,\tilde{D})$ smoothly. Note that $\tilde{\omega}_N$ is different from $\omega_N$ in general, but the canonical classes are the same under the identification of $X_N$ with $\tilde{X}_N$.

With the preceding understood, observe that there is a natural smooth map 
$\pi: \tilde{X}_N\rightarrow \C\P^2$ which is smoothly equivalent to a holomorphic blowing-up. This allows us to introduce a notion of ``infinitely near" amongst the points $\hat{E}_i$, $1\leq i\leq N$, in the same way as in the complex analytic setting. However, we shall formulate it instead in terms of the classes $E_i$, $1\leq i\leq N$. 

\begin{definition}
Given a homological expression $F_k\mapsto A_k=a_k H-\sum_{i=1}^N b_{ki} E_i$ of $D$, where we assume that the final stage of the corresponding successive blowing-down procedure is $\C\P^2$, we can associate a partial order of infinitely-nearness on the $E_i$-classes to the homological expression 
$F_k\mapsto A_k=a_k H-\sum_{i=1}^N b_{ki} E_i$ as follows. 

For any class $E_i$, $1\leq i\leq N$, we say a class $E_j$ is {\bf infinitely near to $E_i$ of order $1$} if the point $\hat{E}_j$ is lying on the exceptional $(-1)$-sphere $C_i$ in the $i$-th blowup $\tilde{X}_i$.
Inductively, we say $E_j$ is {\bf infinitely near to $E_i$ of order $r$}, for $r>1$, if there is a class
$E_k$ which is infinitely near to $E_i$ of order $(r-1)$ and $E_j$ is infinitely near to $E_k$ of order $1$. 
When there is no need to mention the order $r$, we shall simply say that $E_j$ is infinitely near to $E_i$. 

It follows easily that the notion of ``infinitely near" defined above gives rise to a partial order $\leq$ on the classes $E_i$, $1\leq i\leq N$, where $E_i\leq E_j$ if either $E_i=E_j$, or $E_j$ is infinitely near to $E_i$, for which we 
write $E_i<E_j$. We remark that the partial order $\leq$ on the $E_i$'s is consistent with the natural order of the reduced basis $H,E_1,E_2,\cdots,E_N$. 
\end{definition}

The minimal elements $E_i$ with respect to the partial order $\leq$, which correspond to the points 
$\hat{E}_i$ that are lying in $\C\P^2$ (these are the so-called proper base points of $\C\P^2$ in the complex analytic setting, cf. \cite{AC, Bea}), are given below (cf. \cite{C1}, Theorem 4.3),
$$
\E(D):=\{E_i|\mbox{there is no $F_k\subseteq D$ with zero $a$-coefficient such that $E_i\cdot F_k>0$}\}.
$$
In particular, if $E_i$ is non-minimal, then there must be a component of $D$ of zero $a$-coefficient, whose homological expression contains $E_i$ as a non-leading class.

\begin{lemma}
{\em (1)} Suppose $E_i$ is infinitely near to $E_m$ of order $1$, then $E_i$ must be contained in the homological expression of the component of $D$ of leading class $E_m$. On the other hand, 
let $S$ be a component of $D$ of zero $a$-coefficient, and let $E_m$ be its leading class. 
If the homological expression of $S$ contains $E_i$ as a non-leading class, then $E_m<E_i$, i.e., $E_i$ is infinitely near to $E_m$. 

{\em (2)} The maximal elements with respect to the partial order $\leq$ consist of those classes $E_i$,
where either $E_i$ is not the leading class of any component of $D$, or $E_i$ is the leading class 
of a component of $D$ which is a $(-1)$-sphere. 

{\em (3)} Let $E_i$ be a non-minimal class. Then $E_i$ is contained in the homological expression, as a non-leading class, of either one or two components of $D$ of zero $a$-coefficeint. Moreover, let $E_m$, or in the latter case, $E_m, E_n$, be the leading classes respectively. Then $E_i$ is infinitely near to $E_m$ of order $1$, and in the latter case, $E_n<E_m$.

{\em (4)} Let $F_k$ be any component of $D$ with positive $a$-coefficient. Let $b_{ki}$ be the $b_i$-coefficients of $F_k$. Then for any $0<i,j\leq N$, $b_{ki}\geq b_{kj}$ if $E_i\leq E_j$.
\end{lemma}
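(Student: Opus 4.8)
The plan is to translate the combinatorics of the partial order $\leq$ into the geometry of the map $\pi:\tilde{X}_N\to\C\P^2$ built in Section 3.1 by the successive blowing-up, which is smoothly a holomorphic blow-up, and then read off all four assertions from the standard dictionary between homology classes and proper transforms. Write $\mathcal{C}_m\in H^2(\tilde{X}_N)$ for the class of the proper transform of the exceptional sphere $C_m$ created when $\pi$ blows up $\hat{E}_m$. The basic identity to be used is
\[
\mathcal{C}_m=E_m-\sum_{j:\ \hat{E}_j\in C_m}E_j ,
\]
where $\hat{E}_j\in C_m$ means that $\hat{E}_j$ lies on the (proper transform of the) sphere $C_m$; by Definition 3.3 these are exactly the classes $E_j$ infinitely near to $E_m$ of order $1$. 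The identity holds because each time a blown-up point lies on the current proper transform of $C_m$, that transform drops by the corresponding $E_j$. I will also use the dictionary on components: since every descendant of $D$ is $J$-holomorphic and $\pi$ is smoothly a holomorphic blow-up, a component $F_k$ with $a_k>0$ is the proper transform of a degree-$a_k$ pseudoholomorphic curve $\hat{F}_k\subset\C\P^2$, and $b_{ki}=A_k\cdot E_i$ is the multiplicity at $\hat{E}_i$ of the successive proper transform of $\hat{F}_k$; by reversibility of the blowing-down (the review in Section 3.1 and \cite{C1}, Theorem 4.3) a component with $a_k=0$ is the proper transform of an exceptional sphere $C_m$ that is part of $D_m$, its leading class $E_m$ being the one with $b$-coefficient $-1$ in the normal form of Definition 1.1(2), and its class in $\tilde{X}_N$ is exactly $\mathcal{C}_m$.

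Parts (1) and (2) then follow from this dictionary. For a zero-$a$ component $S$ with leading class $E_m$ we have $S=\mathcal{C}_m$, so by the displayed identity the non-leading classes occurring in $S$ are precisely the order-$1$ neighbours of $E_m$; this is the second assertion of (1). The first assertion is the converse: if $E_i$ is infinitely near to $E_m$ of order $1$, then $\hat{E}_i\in C_m$, so $E_i$ is non-minimal, and by the description of $\E(D)$ preceding the lemma $E_i$ lies as a non-leading class in some zero-$a$ component $S'=\mathcal{C}_{m'}$ with $\hat{E}_i\in C_{m'}$; matching this against the displayed identity identifies $m'=m$, so $S=\mathcal{C}_m$ exists and contains $E_i$. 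Part (2) is the degenerate case of the identity: $E_i$ is maximal iff no point is blown up on $C_i$, i.e. $\mathcal{C}_i=E_i$, which by the dictionary happens exactly when either $E_i$ is the leading class of no component, or it is the leading class of a component with no subtracted terms, i.e. a $(-1)$-sphere.

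For part (3), let $E_i$ be non-minimal, so $\hat{E}_i$ is an infinitely-near (non-proper) base point. The total exceptional configuration $\bigcup_l\mathcal{C}_l$ is a normal-crossing tree of spheres, so $\hat{E}_i$ lies on one or two of the curves $C_m$ (at most two branches meet), and \cite{C1}, Lemma 4.5 is the form of this statement adapted to the present blow-down; each such $C_m$ is a zero-$a$ component $\mathcal{C}_m$ containing $E_i$, giving the one-or-two dichotomy, and in each case $E_i$ is infinitely near to the corresponding leading class of order $1$. If two curves $C_m,C_n$ pass through $\hat{E}_i$ then $\mathcal{C}_m$ and $\mathcal{C}_n$ meet there, and two spheres of a blow-up tree meet only when one is infinitely near the other; since $C_m$ is created later, this forces $E_n<E_m$. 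Part (4) is multiplicity monotonicity: for $a_k>0$ the coefficient $b_{ki}$ is the multiplicity of the proper transform of $\hat{F}_k$ at $\hat{E}_i$, and blowing up $\hat{E}_i$ cannot raise multiplicities at points of $C_i$, so $b_{kj}\leq b_{ki}$ whenever $\hat{E}_j$ is infinitely near to $\hat{E}_i$ of order $1$; induction on the order then yields $b_{ki}\geq b_{kj}$ whenever $E_i\leq E_j$.

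The main obstacle is the rigorous setting-up of the dictionary together with the existence half of part (1). Matching the zero-$a$ components with the proper transforms $\mathcal{C}_m$, and showing that possessing an order-$1$ neighbour forces $E_m$ to be a leading class, both rest on the reversibility of the symplectic blowing-down and the precise local normal forms of \cite{C1}, which are what rule out infinitely-near points being created over exceptional spheres that are not components of $D$. A second delicate point, internal to part (3), is the careful handling of \emph{satellite} points $\hat{E}_i\in C_m\cap\mathcal{C}_n$: such a point is subtracted from both $\mathcal{C}_m$ and $\mathcal{C}_n$, which is exactly why $E_i$ can occur in two zero-$a$ components at once, and keeping the proximity bookkeeping consistent with Definition 3.3 (so that $E_i$ is counted as order $1$ with respect to the immediate predecessor $E_m$) is the part that requires the most care.
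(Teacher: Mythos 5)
Your overall strategy---reading everything off the blow-up map $\pi:\tilde{X}_N\to\C\P^2$ and the identity for the class of the proper transform of an exceptional sphere---is a legitimate geometric alternative to the paper's argument, which works mostly homologically: the paper gets part (4) from the one-line computation $S\cdot F_k=b_{ki}-\sum_s b_{kj_s}\ge 0$, and the hard half of part (1) from the observation that $S\cdot\tilde S\ge 0$ forces the order-$1$ predecessor $E_{\tilde m}$ of $E_i$ to occur in $S$, followed by an induction on $|\tilde m-m|<|i-m|$. Your treatments of parts (2) and (4) are correct and essentially equivalent to the paper's.

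There is, however, a genuine gap at the foundation of parts (1) and (3): you identify the classes $E_j$ with $\hat E_j$ lying on the proper transform of $C_m$ (equivalently, the non-leading classes occurring in the component $\mathcal{C}_m$) with the classes infinitely near to $E_m$ \emph{of order $1$}. These two sets differ exactly at satellite classes: a satellite $E_j$ lies on the proper transforms of two exceptional spheres $C_m$ and $C_n$ with $E_n<E_m$, hence occurs as a non-leading class of both components, but it is infinitely near of order $1$ only to $E_m$---this is precisely the content of part (3) and of the uniqueness of the order-$1$ predecessor stated right after the lemma. Consequently your proof of the second assertion of (1) establishes a false strengthening (``the non-leading classes of $S$ are precisely the order-$1$ neighbours of $E_m$''), and the statement actually to be proved---that a non-leading class of $S$, which may be a satellite of arbitrary depth, still satisfies $E_m<E_i$---is left without an argument; this is where the paper's induction is doing real work. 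For the same reason, your part (3) asserts that $E_i$ is order-$1$ near to the leading class of \emph{each} component through $\hat E_i$, which contradicts the very statement being proved in the two-component case. Finally, in the first assertion of (1) you still owe the key point that $E_m$ is a leading class at all: the paper's reason is that if $C_m$ were not a component of $D$ it would have been perturbed to general position before being blown down, so no $\hat E_j$ could lie on it; the ``matching against the displayed identity'' does not substitute for this, since a priori $\hat E_i$ could lie on proper transforms of two distinct exceptional spheres. You flag both issues in your closing paragraph as points requiring care, but the arguments as written do not resolve them.
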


\begin{proof}
(1) First, let $E_i$ be infinitely near to $E_m$ of order $1$. Then since the point $\hat{E}_i$ lies on
the $(-1)$-sphere $C_m$ representing the class $E_m$, $C_m$ must be part of the descendant $D_m$ of
$D$, as otherwise, one would have perturbed $C_m$ to a general position to avoid the point 
$\hat{E}_i$. Consequently, there is a component of $D$ with leading class $E_m$. Moreover, it is
easy to see that its homological expression must contain $E_i$, as $\hat{E}_i$ is lying on $C_m$.

On the other hand, suppose $S$ is a component of $D$ with leading class $E_m$, whose homological
expression contains $E_i$ as a non-leading class. In particular, $E_i$ is a non-minimal class. If $E_i$ is infinitely near to $E_m$ of order $1$, then we are done. Otherwise, there must be another component $\tilde{S}$ with leading class
$E_{\tilde{m}}$, such that $E_i$ is infinitely near to $E_{\tilde{m}}$ of order $1$. Now observe that the homological expressions of both $S$ and $\tilde{S}$ contain $E_i$ as a non-leading class, and with $S\cdot \tilde{S}\geq 0$, it follows easily that $E_{\tilde{m}}$ must be contained in the homological expression of $S$ as a non-leading class. Then note that $|\tilde{m}-m|<|i-m|$, which allows us to 
show $E_m<E_{\tilde{m}}$ by induction. Hence $E_m<E_i$. 

(2) Suppose $E_i$ is not the leading class of any component of $D$. To see it must be maximal,
we note that, in the definition of the successive blowing-down procedure, the class $E_i$ is represented by a symplectic $(-1)$-sphere $C_i$ which intersects transversely and positively with the corresponding descendant of $D$ when we blow down the class $E_i$. In particular, there are no points 
$\hat{E}_j$ lying on $C_i$, so that there are no classes $E_j$ which are infinitely near to $E_i$ of order $1$. This proves that $E_i$ is maximal. If $E_i$ is the leading class of a component $S$ of $D$. Then it follows easily from part (1) that $E_i$ is maximal if and only if $S$ is a $(-1)$-sphere. 

(3) Since $E_i$ is non-minimal, there must be a class $E_m$ such that $E_i$ is infinitely near to
$E_m$ of order $1$. Moreover, by part (1) $E_m$ is the leading class of a component $S$ of $D$ whose
homological expression contains $E_i$. If $S^\prime$ is another component of $D$ of zero 
$a$-coefficient whose homological expression contains $E_i$ as a non-leading class, then 
as we have seen in the proof of part (1), $E_m$ must be contained in the homological expression of 
$S^\prime$ as a non-leading class. Since $S^\prime$ contains both $E_i$ and $E_m$, it follows easily
that there can be at most one such component of $D$, because any two distinct such componentsc have a non-negative intersection by the condition (\dag). Finally, if $E_n$ is the leading class of $S^\prime$, then $E_n<E_m$ by part (1). 

(4) First, let $E_i,E_j$ be any two classes where $E_j$ is infinitely near
to $E_i$ of order $1$. We claim that $b_{ki}\geq b_{kj}$ must be true. To see this, let $S$ be the component of $D$ with leading class $E_i$, and we write
$$
S=E_i-E_{j_1}-E_{j_2}-\cdots -E_{j_m}.
$$
Then $S\cdot F_k=b_{ki}-b_{kj_1}-b_{kj_2}-\cdots-b_{kj_m}$. By part (1), $E_j$ is one of the 
$E_{j_s}$'s. With this understood, it follows easily that $b_{ki}\geq b_{kj}$, as $S\cdot F_k\geq 0$ 
and $b_{kj_s}\geq 0$, $\forall s$ (here we use the assumption that $F_k$ has positive $a$-coefficient). Inductively, we conclude that for any two classes $E_i$, $E_j$, if $E_i\leq E_j$, then 
$b_{ki}\geq b_{kj}$.

\end{proof}

We shall distinguish the two cases in Lemma 3.4(3). Borrowing the relevant terminology from algebraic geometry 
(cf. \cite{AC}, Definition 1.1.21), we call a non-minimal class $E_i$ {\bf free} if there is only one component of $D$ containing it as a non-leading class; otherwise, we call $E_i$ a {\bf satellite} class. 

On the other hand, we also note that, as a corollary of Lemma 3.4, for each 
non-minimal class $E_i$, there is a unique class $E_j$ such that $E_i$ is infinitely near to $E_j$
of order $1$. It follows easily that for any non-minimal class $E_i$, there is a uniquely determined linear
chain of classes $E_{j_1}$, $E_{j_2}, \cdots, E_{j_m}$, such that $E_i$ is infinitely near to $E_{j_1}$
of order $1$, for any $s$, $E_{j_s}$ is infinitely near to $E_{j_{s+1}}$ of order $1$, and the last class
$E_{j_m}\in\E(D)$ (i.e., $E_{j_m}$ is minimal). We shall call it the {\bf linear chain associated to} $E_i$. Note that if $E_m$ is a class such that $E_m<E_i$, then $E_m$ must be one of the $E_{j_s}$'s
in the linear chain associated to $E_i$. 

\subsection{Combinatorial type and virtual combinatorial type}
With the preceding understood, we shall next describe the {\bf combinatorial type} of the symplectic arrangement 
$\hat{D}\subset \C\P^2$, resulted from the successive blowing-down procedure associated to a given 
a homological expression $F_k\mapsto A_k=a_k H-\sum_{i=1}^N b_{ki} E_i$ of $D$.

First of all, observe that each component $F_k$ in $D$ has a 
non-negative $a$-coefficient, as the final stage of the successive blowing-down is $\C\P^2$.
Secondly, only each of those $F_k$ with positive $a$-coefficient descends to an irreducible component in $\hat{D}$, which we denote by $\hat{F}_k$, i.e.,
$$
\hat{D}=\cup_{\{k| a_k>0\}} \hat{F}_k.
$$
With this understood, as part of the combinatorial type of $\hat{D}$ we assign each $\hat{F}_k$ with a pair of integers $(a_k,g_k)$, where $a_k$ is the $a$-coefficient of $F_k$ and $g_k$ is the genus of $F_k$. It is easy to 
see that $a_k$ is the degree of $\hat{F}_k$ in $\C\P^2$, and $\hat{F}_k$ can be parametrized by a  $\hat{J}$-holomorphic map from a genus $g_k$ surface into $\C\P^2$. The rest of the combinatorial type is concerned with the singularities of each $\hat{F}_k$ as well as how the components of $\hat{D}$ intersect with each other. 

Any intersection point in $D$ between $F_k,F_l$, where $a_k,a_l>0$, carries over to $\hat{D}$. The new intersections and the singularities of the components $\hat{F}_k$ in 
$\hat{D}$ all occur at the points $\hat{E}_i$ where $E_i\in\E(D)$. To describe this part of the combinatorial type of $\hat{D}$, we recall that for each $E_i\in\E(D)$, there is a $4$-ball $B(\hat{E}_i)$ in $\C\P^2$, with center $\hat{E}_i$, such that $(\hat{\omega},\hat{J})$ is the standard linear structure on $B(\hat{E}_i)$.

\vspace{2mm}

\noindent{\bf Description of $\hat{D}$ near the points $\hat{E}_i$, where $E_i\in\E(D)$:}
\begin{itemize}
\item Suppose $E_i\in\E(D)$ is maximal. Then each component 
$\hat{F}_k\subset \hat{D}$ in the $4$-ball $B(\hat{E}_i)$ consists of $b_{ki}=E_i\cdot F_k$ many
complex linear disks. 
\item Suppose $E_i\in\E(D)$ is non-maximal. Let $S_i$ be the component of $D$ whose leading class is $E_i$, and let $\{E_{i_\alpha}\}$ be the set of classes such that each $E_{i_\alpha}$
is infinitely near to $E_i$ of order $1$. Then each $\hat{F}_k\cap B(\hat{E}_i)$ consists of a union of holomorphic disks intersecting at $\hat{E}_i$, $\hat{F}_k\cap B(\hat{E}_i)=\cup_j \U_{kj}$, where $j$ is running over the classes $E_j$ such that $E_i\leq E_j$, according to the following rules: 
\begin{itemize}
\item For the case of $E_j=E_i$, if $F_k$ intersects $S_i$, then there is a complex line $L_k$ in
$B(\hat{E}_i)$ and $\U_{kj}$ consists of one linear disk lying on $L_k$. If $F_k$ does not intersect $S_i$, then $\U_{kj}$ is empty. 
\item Each $E_{i_\alpha}$ is assigned with a complex line $L_\alpha$ in $B(\hat{E}_i)$, distinct from each $L_k$, and a complex coordinate system $(z_1,z_2)$ on $B(\hat{E}_i)$ such that $L_\alpha$
is given by $z_1=0$. Moreover, (i) if $E_{i_\alpha}$ is maximal, then $\U_{ki_\alpha}$
consists of $b_{ki_\alpha}=E_{i_\alpha}\cdot F_k$ many embedded disks, each of which is either given by $z_1=0$ or $z_1=cz_2^2$ for some $0\neq c\in\C$.
(ii) If $E_{i_\alpha}$ is not maximal, then for the case $E_j=E_{i_\alpha}$, $\U_{kj}$ is either empty or consists of one embedded disk given by $z_1=0$ or $z_1=cz_2^2$, depending on whether the component of $D$ with leading class $E_{i_\alpha}$ intersects $F_k$ or not. For any other $j$ where 
$E_{i_\alpha}<E_j$, if $E_j$ is maximal, then $\U_{kj}$ consists of $b_{kj}=E_j\cdot F_k$ many 
disks all given by the same equation $z_1^q=cz_2^{p+q}$ (with distinct $c$). 
The relatively prime integer pairs $(p,q)$ appearing in the equation defining the disks in $\U_{kj}$ can be computed,
in a canonical way, from the homological expression of the components of $D$ whose leading class appears in the linear chain associated to the maximal class $E_j$. If $E_j$ is not maximal, then $\U_{kj}$ is either empty or consists of only one disk (embedded or singular of form $z_1^q=cz_2^{p+q}$), depending on whether the component of $D$ with leading class $E_j$ intersects $F_k$ or not. 
(See the construction of the successive blowing-down in \cite{C1}, Section 4.)
\end{itemize}
\end{itemize}

We observe that the description of combinatorial type above only involves the partial order $\leq$ of infinitely-nearness on the set of $E_i$-classes and the corresponding homological assignment $(\vec{v}_k)$ in 
$\hat{\Omega}(D)$. (Note that even the genus $g_k$ of $F_k$ is determined by $\vec{v}_k$ via the adjunction formula.) 
Furthermore, it is easy to see that the description can be extended to a virtual setting in a
fairly straightforward way. But first, we need to formulate it properly. 

\begin{definition}
Let  $H,E_1,E_2,\cdots,E_N$ be a standard basis which is ordered. Let $(\vec{v}_k)\in \Omega(D)$ be an element 
satisfying the following conditions:
\begin{itemize}
\item The first entry $a_k$ in each $\vec{v}_k$ is non-negative.
\item Each class $A_k:=a_k H-\sum_{i=1}^N b_{ki}E_i$ is positive with respect to the ordered basis $H,E_1,E_2,\cdots,E_N$ in the sense of Definition 2.2(2), where $a_k,b_{ki}$ are the entries of $(\vec{v}_k)$.
\end{itemize}
We call the assignment $F_k\mapsto A_k$ a {\bf virtual homological expression} of $D$.
\end{definition}

We remark that the main difference between a virtual homological expression and a homological expression is that in a virtual homological expression, the standard basis $H,E_1,E_2,\cdots,E_N$, which is ordered, is not required to satisfy any area conditions.
With this understood, we remark that the assumptions (a) and (b) in the successive blowing-down procedure make perfect sense for a virtual homological expression. 

The following lemma is self-evident, whose proof is left to the reader. 

\begin{lemma}
Let $F_k\mapsto A_k=a_k H-\sum_{i=1}^N b_{ki} E_i$ be a virtual homological expression of $D$. Then there is a well-defined partial order of infinitely-nearness on the set of $E_i$-classes. Furthermore, the virtual homological expression $F_k\mapsto A_k=a_k H-\sum_{i=1}^N b_{ki} E_i$ determines a well-defined combinatorial type, which
is called {\bf the virtual combinatorial type} associated to $F_k\mapsto A_k=a_k H-\sum_{i=1}^N b_{ki} E_i$.
\end{lemma}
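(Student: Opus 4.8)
The plan is to observe that, although the partial order of infinitely-nearness and the combinatorial type of $\hat{D}$ were originally introduced geometrically through the successive blowing-down, both were subsequently shown to be governed entirely by the combinatorial data carried by the homological expression: the integers $a_k,b_{ki}$, the subset of components with zero $a$-coefficient, and their leading classes. Indeed, Lemma 3.4 characterizes the relation ``infinitely near'' purely in terms of which components of zero $a$-coefficient contain a given $E_i$ as a non-leading class, and the \textbf{Description of $\hat{D}$ near the points $\hat{E}_i$} expresses every disk, every exponent pair $(p,q)$, and every incidence solely through the partial order $\leq$ and the coefficients $b_{ki}$. Since a virtual homological expression (Definition 3.5) is an element of $\Omega(D)$ with all $a_k\geq 0$ and all classes positive, it furnishes exactly this data and satisfies the same constraints. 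I would therefore adopt these combinatorial recipes as the definitions of $\leq$ and of the virtual combinatorial type, and verify that they are internally consistent, i.e. that $\leq$ is a genuine partial order and that the recipe for the combinatorial type is unambiguous.

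First I would define the generating relation: declare $E_m \prec E_i$ whenever $E_m$ is the leading class of a component $S$ of zero $a$-coefficient whose homological expression contains $E_i$ as a non-leading class (the combinatorial content of the converse in Lemma 3.4(1)), and let $\leq$ be the reflexive--transitive closure of $\prec$. Reflexivity and transitivity are then automatic, so the only issue is antisymmetry. Here the positivity condition of Definition 2.2(2) is decisive: for a component $S$ of zero $a$-coefficient, admissibility (Definition 1.1) forces $b_{Sm}=-1$ at the leading index $m$ and $b_{Sj}\in\{0,1\}$ otherwise, whence positivity forces $m<j$ for every non-leading class $E_j$ of $S$. Thus $E_m\prec E_i$ implies $m<i$, the relation strictly increases the index, and no cycles can occur. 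Consequently $\leq$ is a well-defined partial order, graded by index, so ``order $r$'' and the minimal set $\E(D)$ are unambiguously determined.

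Next I would pin down the immediate predecessor needed for the linear chains. For a non-minimal class $E_i$, condition $(\dag)$ gives $\nu_{kl}=F_k\cdot F_l\geq 0$ for all $k\neq l$, so equation (3) of Definition 1.2 reads $a_k a_l-\sum_i b_{ki}b_{li}\geq 0$. Applying this to two zero-$a$ components $S,S'$ that both contain $E_i$ as a non-leading class, the common class $E_i$ contributes $-1$ to $S\cdot S'$, so a compensating $+1$ is forced; given the index-minimality of leading classes just established, this can only arise from one of the two leading classes lying in the other component, and the same counting shows at most two such components can occur. This reproduces Lemma 3.4(3) combinatorially: each non-minimal $E_i$ lies in one or two zero-$a$ components, and its $\prec$-predecessor of largest index is its unique order-$1$ predecessor. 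Hence the linear chain associated to each $E_i$ is well-defined.

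With $\leq$ established as a partial order admitting unique order-$1$ predecessors, the \textbf{Description of $\hat{D}$ near the points $\hat{E}_i$} becomes a deterministic recipe whose only inputs are $(\vec{v}_k)$ and $\leq$: the minimal classes $\E(D)$, the order-$1$ infinitely-near classes attached to each of them, the recursive chains, and the exponent pairs $(p,q)$ computed canonically from the homological expressions of the components whose leading classes appear in the relevant linear chain. Every ingredient has now been shown to depend only on the virtual homological expression, so the output, the virtual combinatorial type, is well-defined. I expect the main obstacle to be the antisymmetry/no-cycle step; but as noted it is forced cleanly by the positivity condition, which is precisely why the statement is essentially self-evident once the combinatorial characterizations of Lemma 3.4 are in hand.
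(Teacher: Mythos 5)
Your proposal is correct, and it supplies exactly what the paper omits: the paper declares this lemma ``self-evident'' and leaves the proof to the reader, so there is no argument to compare against line by line. Your route is the intended one --- observe that Lemma 3.4 and the Description of $\hat{D}$ near the points $\hat{E}_i$ are phrased entirely in terms of the coefficients $a_k,b_{ki}$, the zero-$a$ components and their leading classes, and the partial order $\leq$ --- but you add genuinely necessary content that the paper glosses over. Since the paper's proofs of Lemma 3.4(1) and 3.4(3) invoke the actual successive blowing-down (perturbing $(-1)$-spheres, locating the points $\hat{E}_i$ on exceptional curves), they are not directly available in the virtual setting, where no symplectic realization is assumed; one really must re-derive the needed facts from the algebraic data alone. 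Your combinatorial substitutes do this cleanly: admissibility plus positivity force the leading index of a zero-$a$ component below all its non-leading indices, which kills cycles and makes the transitive closure of your relation $\prec$ a genuine partial order graded by index; and the computation $S\cdot S' = [m\in S'] + [n\in S] - |\{l_s\}\cap\{l'_t\}| \geq 0$ correctly forces exactly one leading class to appear in the other component and rules out a third such component, giving the unique order-$1$ predecessor and hence well-defined linear chains. With those two facts in hand, the recipe for the combinatorial type is deterministic in $(\vec{v}_k)$, as you say. The one cosmetic caveat is that your $\leq$ is defined as the closure of $\prec$ rather than of the order-$1$ relation, but Lemma 3.4(1) shows the two closures coincide in the realizable case, so your definition is the right extension.
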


It is easy to see that a homological expression of $D$, which gives rise to a symplectic arrangement $\hat{D}$ in
$\C\P^2$ under the successive blowing-down, is a virtual homological expression of $D$. Moreover, the combinatorial type of $\hat{D}$ coincides with the virtual combinatorial type. This is consistent with the following definition.

\begin{definition}
We say a virtual homological expression of $D$ is {\bf realizable} if there is a symplectic arrangement in $\C\P^2$ which realizes the virtual combinatorial type associated to the virtual homological expression. Otherwise, it is called {\bf nonrealizable}. 
\end{definition}

Here in Definition 3.7, we do not require the symplectic arrangement is actually resulted from the successive blowing-down procedure associated to a homological expression of $D$.

\subsection{Quadratic Cremona transformations}
Recall that for any $(-2)$-class $\gamma=H-E_r-E_s-E_t$, the reflection $R(\gamma)$ acts on the set
$\Omega(D)$ as long as the admissibility of the corresponding classes are preserved (cf. Section 1).
In what follows, we shall give some conditions under which the admissibility is preserved under $R(\gamma)$. 
The reflections $R(\gamma)$ are closely related to quadratic Cremona transformations in algebraic geometry. 
Under some further assumptions, we shall define a symplectic analog of quadratic Cremona transformations
associated to such a reflection $R(\gamma)$.

We first give a brief review on quadratic Cremona transformations in algebraic geometry, see e.g. \cite{AC, Moe} 
for a more comprehensive discussion. 

Recall that a quadratic Cremona transformation is a degree $2$ birational automorphism of $\C\P^2$, $\Psi: \C\P^2\dashrightarrow \C\P^2$. As such, it has $3$ base points counted with multiplicity. It follows easily that there is a rational surface $X$, together with a pair of birational morphisms $\pi, \pi^\prime: X\rightarrow \C\P^2$, each of which is a successive blowing-up at $3$ points, such that $\pi^\prime=\Psi\cdot \pi$. 
With this understood, if we denote by $H,E_1,E_2,E_3$ (resp. $H^\prime,E_1^\prime,E_2^\prime,E_3^\prime$) the standard basis associated to the successive blowing-up $\pi: X\rightarrow \C\P^2$ (resp. $\pi^\prime$), and let 
$\gamma=H-E_1-E_2-E_3$, then the reflection associated to the Cremona transformation 
$\Psi: \C\P^2\dashrightarrow \C\P^2$ is $R(\gamma)$. In particular, note that 
$H^\prime,E_1^\prime,E_2^\prime,E_3^\prime$ is the image of $H,E_1,E_2,E_3$ under $R(\gamma)$; more concretely, 
$$
H^\prime=2H-E_1-E_2-E_3,E_1^\prime=H-E_2-E_3, E_2^\prime=H-E_1-E_3, E_3^\prime=H-E_1-E_2.
$$

If $\tilde{X}$ is any rational surface with birational morphisms $\tilde{\pi}, \tilde{\pi}^\prime: \tilde{X}\rightarrow \C\P^2$ such that $\tilde{\pi}^\prime=\Psi\circ \tilde{\pi}$, then there is a birational morphism $\eta: \tilde{X}\rightarrow X$ such that $\tilde{\pi}, \tilde{\pi}^\prime$ can factor through $\eta$, i.e, 
$\tilde{\pi}=\pi\circ \eta$ and $\tilde{\pi}^\prime=\pi^\prime\circ\eta$. It is easy to see 
that the reflection $R(\gamma)$ extends uniquely to a corresponding reflection on $H^2(\tilde{X})$, and 
$\Psi=\pi^\prime\circ \pi^{-1}=\tilde{\pi}^\prime\circ \tilde{\pi}^{-1}$, i.e., the Cremona transformation $\Psi$ is obtained by a successive blowing up $\pi^{-1}$ (resp. $\tilde{\pi}^{-1}$) followed by a successive blowing down 
$\pi^\prime$ (resp. $\tilde{\pi}^{\prime}$).

In order to give a more concrete description of $\Psi$, we let $p_1,p_2,p_3$ (resp. $p_1^\prime,p_2^\prime,p_3^\prime$) be the corresponding base points. Then there are three scenarios for the successive blowing-up $\pi$ (resp. $\pi^\prime$):
\begin{itemize}
\item [{(1)}] $p_1,p_2,p_3$ (resp. $p_1^\prime,p_2^\prime,p_3^\prime$) are all proper base points, and $p_1,p_2,p_3$ (resp. $p_1^\prime,p_2^\prime,p_3^\prime$) are not contained in a line in $\C\P^2$,
\item [{(2)}] $p_1,p_2$  (resp. $p_1^\prime,p_2^\prime$) are proper base points, 
$p_3$ (resp. $p_3^\prime$) is infinitely near to $p_2$  (resp. $p_2^\prime$), such that $p_3$ (resp. $p_3^\prime$)
does not lie in the proper transform of the line passing through  $p_1,p_2$  (resp. $p_1^\prime,p_2^\prime$),
\item [{(3)}] $p_1$  (resp. $p_1^\prime$) is proper, $p_2$  (resp. $p_2^\prime$) is infinitely near 
to $p_1$  (resp. $p_1^\prime$),  $p_3$  (resp. $p_3^\prime$) is infinitely near 
to $p_2$  (resp. $p_2^\prime$), and $p_3$  (resp. $p_3^\prime$) is not a satellite point. 
\end{itemize}

Correspondingly, we have the following description of the Cremona transformation $\Psi: \C\P^2\dashrightarrow \C\P^2$ in each of the three cases above (cf. \cite{Moe}, Chapter 5).

\vspace{1mm}

Case  (1): Since $p_1,p_2,p_3$ are proper base points and are not contained in a line, for each of the three pairs of points, i.e., $p_1,p_2$, $p_2,p_3$, and $p_1,p_3$, there is a unique line containing them, which will be denoted by
$L_3$, $L_1$, and $L_2$ respectively. Now blow up at $p_1,p_2$, and $p_3$, and let $E_1,E_2$, and $E_3$
be the corresponding exceptional $(-1)$-spheres. Then observe that the proper transform of $L_3$, $L_1$, and $L_2$ has class $E_3^\prime=H-E_1-E_2$, $E_1^\prime=H-E_2-E_3$ and $E_2^\prime=H-E_1-E_3$ 
respectively, which are $(-1)$-spheres in $X$. With this understood, the birational morphism 
$\pi^\prime: X\rightarrow \C\P^2$ blows down these $(-1)$-spheres to the base points $p_3^\prime,p_1^\prime,p_2^\prime$ in $\C\P^2$. Note that if we let $\pi^\prime(E_i)=L_i^\prime$ for $i=1,2,3$,
then $L_1^\prime, L_2^\prime, L_3^\prime$ are distinct lines, and  $L_1^\prime$ contains $p_2^\prime$ and 
$p_3^\prime$, $L_2^\prime$ contains $p_1^\prime$ and 
$p_3^\prime$, and $L_3^\prime$ contains $p_1^\prime$ and $p_2^\prime$ respectively. See Figure 3(1).

\begin{figure}[h]
   \centering
   \includegraphics[width=0.95\textwidth]{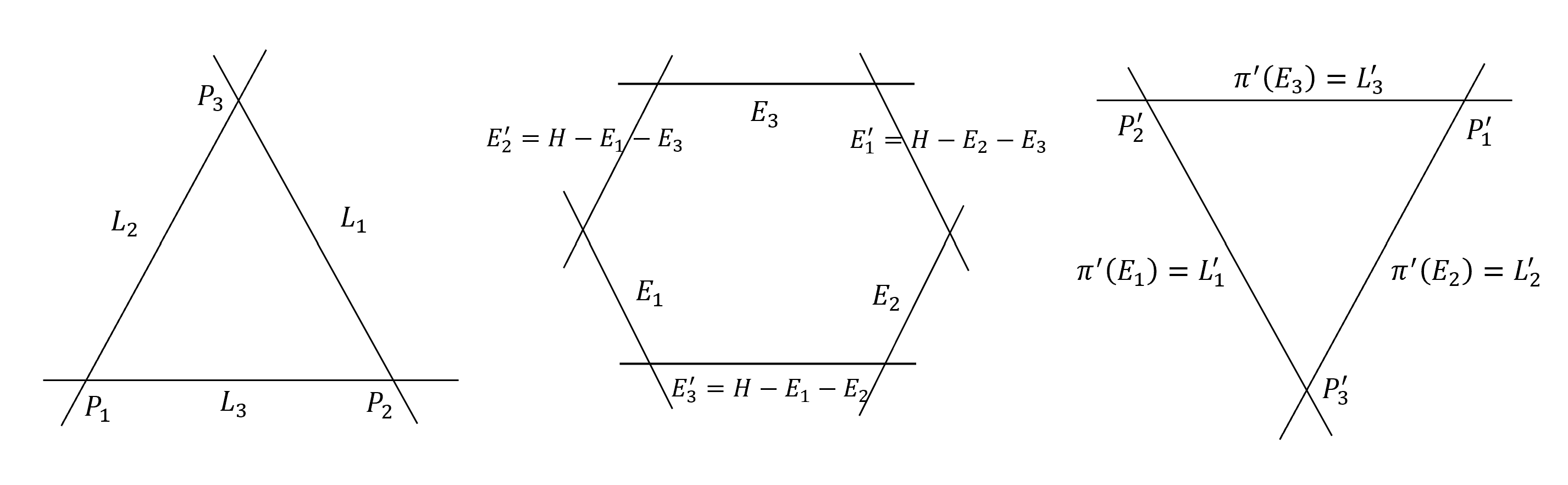}
   \caption*{Figure 3(1)}
\end{figure}

Case  (2): Since $p_1,p_2$ are proper base points, there is a unique line $L_3$ containing both of them. Let 
$L_1$ be the unique line passing through $p_2$ whose proper transform contains the base point $p_3$, as $p_3$
is infinitely near to $p_2$. Note that by the assumption in (2), it follows easily that $L_1,L_3$ are distinct. Now 
after blowing up at $p_1,p_2$ and $p_3$ and letting $E_1,E_2$ and $E_3$ be the corresponding exceptional 
$(-1)$-spheres, we observe that the proper transform of $L_3$ and $L_1$ has class 
 $E_3^\prime=H-E_1-E_2$ and $E_1^\prime=H-E_2-E_3$ respectively, and observe that 
 $E_2^\prime-E_3^\prime=E_2-E_3$, which is the class of the proper transform of $E_2$ 
 after blowing up at $p_3$. With this understood, the birational morphism $\pi^\prime: X\rightarrow \C\P^2$ blows down $E_3^\prime, E_2^\prime$ and $E_1^\prime$ successively. It is clear that for the corresponding base points, $p_3^\prime$ is infinitely near to $p_2^\prime$, and $p_1^\prime,p_2^\prime$ are proper base points. Moreover,  $p_1^\prime,p_2^\prime$ are contained in the line $L_3^\prime:=\pi^\prime(E_3)$, and the line $L_1^\prime:=\pi^\prime(E_1)$ contains the base point $p_2^\prime$ and its proper transform contains the base point $p_3^\prime$, and  $L_1^\prime\neq L_3^\prime$. See Figure 3(2).

\begin{figure}[h]
   \centering
   \includegraphics[width=0.95\textwidth]{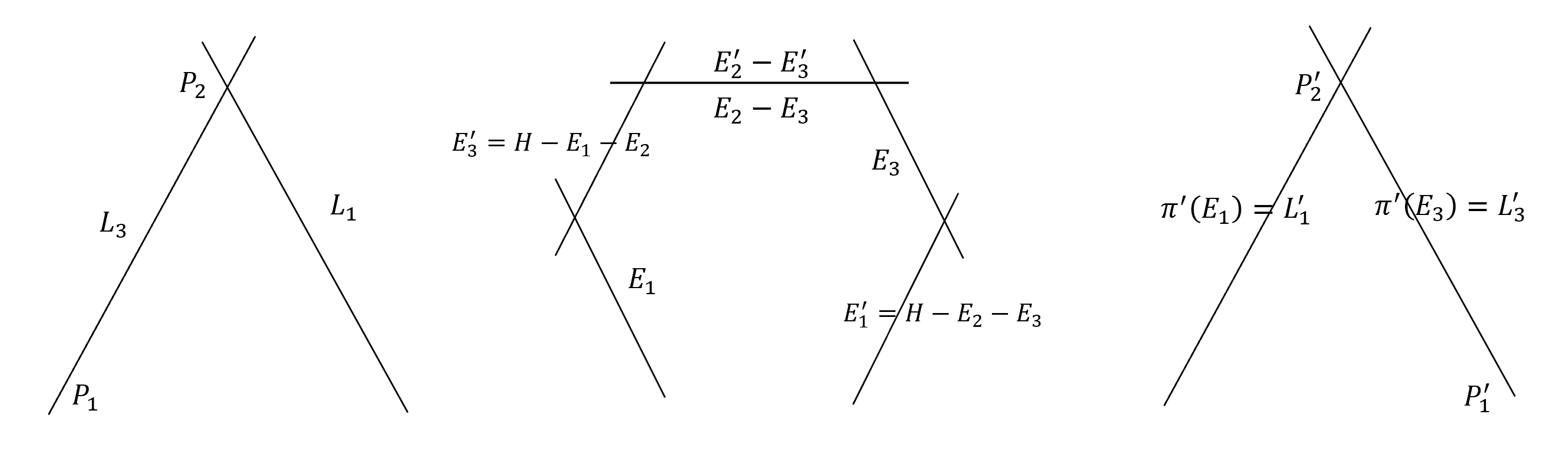}
   \caption*{Figure 3(2)}
\end{figure}

Case  (3): Let $L_3$ be the unique line passing through the proper base point $p_1$ such that its proper transform contains the base point $p_2$ after blowing up at $p_1$, as $p_2$ is infinitely near to $p_1$. Furthermore, since 
the base point $p_3$ is infinitely near to $p_2$ but is not a satellite point, it is not contained in the proper transform of the exceptional $(-1)$-sphere $E_1$, which has class $E_1-E_2$. Note that $p_3$ is also not contained in the proper transform of  $L_3$ after blowing up successively at $p_1$ and $p_2$, as it has class
$E_3^\prime=H-E_1-E_2$. It follows easily that after blowing up at  $p_3$, the proper transforms of $E_1$ and $L_3$ remain the same, which has class $E_1-E_2=E_1^\prime-E_2^\prime$ and $E_3^\prime=H-E_1-E_2$
respectively. Note that the proper transform of $E_2$ has class $E_2-E_3=E_2^\prime-E_3^\prime$. Now the
birational morphism $\pi^\prime: X\rightarrow \C\P^2$ blows down $E_3^\prime, E_2^\prime$ and $E_1^\prime$ successively. If we let $L_3^\prime=\pi^\prime(E_3)$, then it is easy to see that $L_3^\prime$ contains the base point $p_1^\prime$, which is a proper base point, the proper transform of $L_3^\prime$ contains the base point $p_2^\prime$, and the base point $p_3^\prime$ is not a satellite point. See Figure 3(3).

\vspace{2mm}

\begin{figure}[h]
   \centering
   \includegraphics[width=0.95\textwidth]{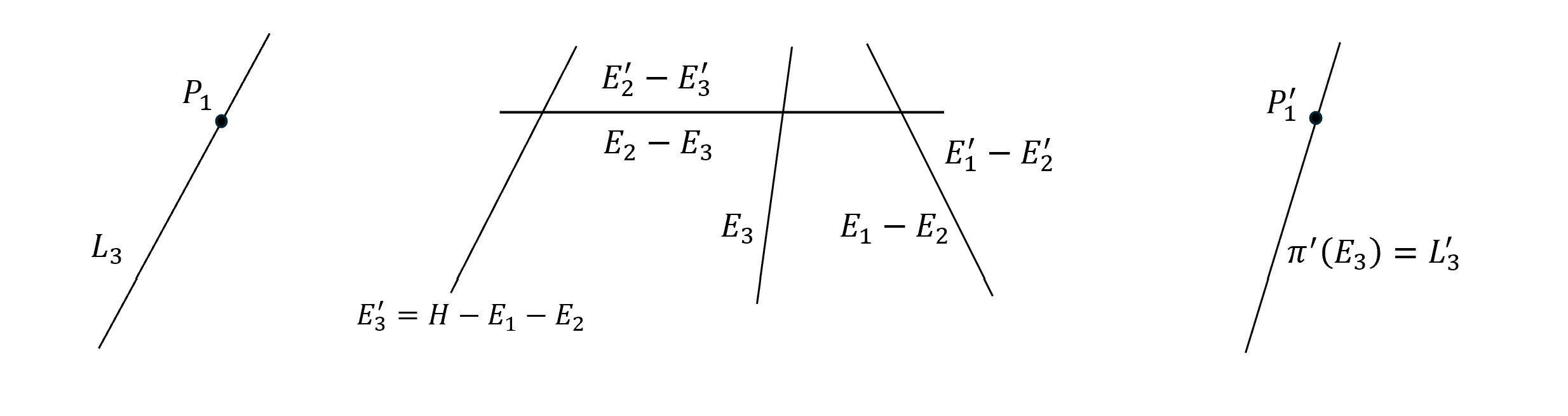}
   \caption*{Figure 3(3)}
\end{figure}

Now suppose we are given a complex arrangement $\hat{D}$ in $\C\P^2$ (i.e., a union of irreducible curves in 
$\C\P^2$). Furthermore, let $\tilde{X}$ be a rational surface with birational morphisms 
$\tilde{\pi}, \tilde{\pi}^\prime: \tilde{X}\rightarrow \C\P^2$ such that $\Psi=\tilde{\pi}^\prime\circ \tilde{\pi}^{-1}$.
For each $D\subset \tilde{X}$ which is a union of irreducible curves $\{C_k\}$, whose direct image under
$\tilde{\pi}: \tilde{X}\rightarrow \C\P^2$ is the given a complex arrangement $\hat{D}$, we obtain a complex
arrangement $\hat{D}^\prime$ in $\C\P^2$ which is the direct image of $D$ under 
$\tilde{\pi}^\prime: \tilde{X}\rightarrow \C\P^2$. Note that the correspondence between $\hat{D}$ and $\hat{D}^\prime$, to be denoted by $\Psi_D: \hat{D}\mapsto \hat{D}^\prime$, not only depends on the Cremona transformation $\Psi$, but also on the choice of $D\subset \tilde{X}$; in particular, $D$ is not unique and some of 
the irreducible curves $C_k$ in $D$ can be exceptional under $\tilde{\pi}: \tilde{X}\rightarrow \C\P^2$. Note that
algebraically, $\Psi_D: \hat{D}\mapsto \hat{D}^\prime$ is given by the correspondence $(\vec{v}_k)\mapsto 
(\vec{v}_k^\prime):=R(\gamma)(\vec{v}_k)$, where $R(\gamma)$ is the reflection associated to the Cremona transformation $\Psi$ and $(\vec{v}_k), (\vec{v}_k^\prime)$ are the sets of vectors given by the coefficients of the 
irreducible curves $\{C_k\}$ in $D$ with respect to the standard bases associated to the birational morphisms 
$\tilde{\pi}, \tilde{\pi}^\prime: \tilde{X}\rightarrow \C\P^2$  (cf. Lemma 2.3(1)). In particular, the combinatorial types of $\hat{D},\hat{D}^\prime$ are encoded in $(\vec{v}_k), (\vec{v}_k^\prime)$. In a nutshell, our main technical result, Theorem 1.8, describes a symplectic analog of the correspondence  
$\Psi_D: \hat{D}\mapsto \hat{D}^\prime$. 

\vspace{2mm}

Now back to the study of the symplectic configuration $D$. Let $(\vec{v}_k)\in\Omega(D)$ be an element which is realized by some $\omega\in Z(\vec{\delta})$, such that the successive blowing-down procedure can be performed to the final stage of $\C\P^2$, resulting a symplectic arrangement $\hat{D}$ in $\C\P^2$.
Let $H,E_1,E_2,\cdots,E_N$ be the reduced basis, with respect to which the $a$, $b_i$-coefficients of the class of $F_k$ are given by the entries in the vector $\vec{v}_k$. Then as we have shown earlier, there is a partial order 
$\leq$ of infinitely-nearness on the set of the $E_i$-classes, and a combinatorial type associated to the symplectic arrangement $\hat{D}$, all depending only on $(\vec{v}_k)$. 

With the preceding understood, let $E_r, E_s,E_t$ be three distinct $E_i$-classes, and let 
$\gamma:=H-E_r-E_s-E_t$. Set $(\vec{v}_k^\prime):=R(\gamma)(\vec{v}_k)$. 
Then observe that if we let $H^\prime, E_1^\prime,E_2^\prime,\cdots E_N^\prime$
be the image of $H,E_1,E_2,\cdots,E_N$ under the reflection $R(\gamma)$, and write
$\vec{v}_k=(a_k, b_{k1},\cdots,b_{kN})$ and $\vec{v}_k^\prime=(a_k^\prime, b_{k1}^\prime,\cdots,b_{kN}^\prime)$, then 
$$
a_kH-\sum_{i=1}^N b_{ki} E_i=a_k^\prime H^\prime-\sum_{i=1}^N b_{ki}^\prime E_i^\prime.
$$
In particular, $\vec{v}_k^\prime$ encodes the coefficients of the class of $F_k$ with respect to the basis $H^\prime, E_1^\prime,E_2^\prime,\cdots E_N^\prime$, which is only a standard basis.

\begin{lemma}
Let $(\vec{v}_k^\prime):=R(\gamma)(\vec{v}_k)$, where $\gamma:=H-E_r-E_s-E_t$. 
Then $(\vec{v}_k^\prime)\in\Omega(D)$, i.e., each 
$\vec{v}_k^\prime$ is admissible, if and only if the following conditions hold: for any $k$,
\begin{itemize}
\item [{(i)}] if $a_k=1$, then $b_{kr}+b_{ks}+b_{kt}\leq 2$, and 
\item [{(ii)}] if $a_k=0$, then $b_{kr}+b_{ks}+b_{kt}\leq 0$. 
\end{itemize}
Moreover, the first entry $a_k^\prime$ in each $\vec{v}_k^\prime$ is non-negative. 
\end{lemma}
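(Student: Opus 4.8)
The plan is to make the reflection completely explicit and then reduce the whole question to a short, finite admissibility check organized by the value of $a_k$. Writing $\beta_k:=b_{kr}+b_{ks}+b_{kt}$ and using $\gamma^2=-2$, I first compute $\gamma\cdot A_k=a_k-\beta_k$, so that $a_k'=2a_k-\beta_k$, the three moved coefficients are $b_{kr}'=a_k-b_{ks}-b_{kt}$ (and cyclically for $s,t$), while $b_{ki}'=b_{ki}$ for $i\notin\{r,s,t\}$. Since $R(\gamma)$ is an isometry of the intersection lattice fixing $c_1(K_X)$, the equations (1)--(3) of Definition 1.2 are automatically inherited by $(\vec v_k')$; hence $(\vec v_k')\in\Omega(D)$ is equivalent to admissibility of each $\vec v_k'$, and I may split on the value of $a_k$, recalling that $a_k\ge 0$ for all $k$ because the final stage of the blow-down is $\C\P^2$.

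For $a_k\ge 2$ I would show $\vec v_k'$ is always admissible, with no extra condition imposed --- consistent with (i),(ii) constraining only $a_k\in\{0,1\}$. The genus constraint, rewritten as $\sum_i b_{ki}(b_{ki}-1)\le (a_k-1)(a_k-2)$, forces $b_{ki}\le a_k-1$, and positivity of intersection of the pseudoholomorphic curve $\hat F_k$ with the line classes $H-E_i-E_j$ gives the pairwise bound $b_{ki}+b_{kj}\le a_k$. Together these make all three moved coefficients nonnegative and $a_k'=2a_k-\beta_k\ge 1>0$, so $\vec v_k'$ is admissible.

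The crux is $a_k\in\{0,1\}$, treated by a direct finite check. For $a_k=1$ the genus bound forces every $b_{ki}\in\{0,1\}$, so $\beta_k\in\{0,1,2,3\}$; evaluating the four cases shows $\vec v_k'$ is admissible exactly when $\beta_k\le 2$ (for $\beta_k=3$ one gets $a_k'=-1$ with three coefficients equal to $-1$, violating Definition 1.1(2)), which is (i). For $a_k=0$, admissibility of $\vec v_k$ pins the pattern $A_k=E_m-\sum_s E_{l_s}$, i.e. a single $-1$ at the leading class and the remaining entries in $\{0,1\}$, so $b_{kr},b_{ks},b_{kt}\in\{-1,0,1\}$ with at most one $-1$; running through these sign patterns yields admissibility of $\vec v_k'$ precisely when $\beta_k\le 0$, which is (ii). The supplementary claim $a_k'\ge 0$ is then read off from the same tabulation.

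The point needing the most care is the necessity (``only if'') half for $a_k=0$. The bookkeeping is clean except for one degenerate pattern: if one of $E_r,E_s,E_t$ is the leading class $E_m$ of a zero-$a$ component $S$ and the other two appear with coefficient $+1$ in $S$ (so $\beta_k=1$), then $R(\gamma)(A_S)=-H+2E_m$ is still admissible although (ii) fails. I would dispose of this via the partial order of infinitely-nearness: in that pattern $E_s,E_t$ are both infinitely near $E_m$, which is exactly the degenerate (non-Cremona) choice of $\gamma$ that the hypotheses of Theorem 1.8 exclude, so handling it uses the all-$k$ statement together with Lemma 3.4 to locate another component whose image is non-admissible. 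The second delicate input is the positivity bound for $a_k\ge 2$ when $E_i,E_j$ are infinitely near (so $H-E_i-E_j$ is not a genuine line), which I would justify through the proximity relations encoded in Lemma 3.4(4) rather than by a naive B\'ezout count.
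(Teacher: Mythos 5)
Your argument for the ``if'' direction coincides with the paper's own proof in every essential respect: the same explicit formulas $a_k'=2a_k-(b_{kr}+b_{ks}+b_{kt})$, $b_{kr}'=a_k-(b_{ks}+b_{kt})$, the same observation that equations (1)--(3) of Definition 1.2 are automatic because $R(\gamma)$ is an isometry fixing $c_1(K_X)$, the same finite check for $a_k\in\{0,1\}$, and for $a_k\ge 2$ the same two inputs: $b_{ki}\le a_k-1$ from the genus identity, and the pairwise bound $a_k\ge b_{ki}+b_{kj}$ obtained by intersecting $\hat{F}_k$ with a degree $1$ $\hat{J}$-holomorphic sphere. The paper's ``Claim'' handles the infinitely-near subtlety you flag by taking the degree $1$ sphere through $\hat{E}_m$ tangent to the distinguished line $L_\alpha$ and then invoking Lemma 3.4(4); your sketch of that step is vaguer (Lemma 3.4(4) alone does not yield the pairwise bound --- you still need the tangent-line intersection count), but the intended argument is the same.

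Where you genuinely diverge is the ``only if'' direction, which the paper's proof does not address at all, and here you have put your finger on a real defect in the statement. The pattern you isolate --- $a_k=0$ with $b_{kr}=-1$ (leading class) and $b_{ks}=b_{kt}=1$, so $\beta_k=1$ --- does produce an admissible image ($a_k'=-1$, a single coefficient $-2$, the rest unchanged), so condition (ii) is not necessary at the level of that component. Your proposed repair, however, does not go through: Lemma 3.8 is stated without the hypotheses (1)--(3) of Theorem 1.8, so you cannot import them, and the hoped-for ``other component whose image fails'' need not exist. For instance, if $D$ is a single $(-3)$-sphere in the class $E_r-E_s-E_t$ (which satisfies all the standing assumptions and blows down to $\C\P^2$), then $R(\gamma)$ sends its class to $-H+2E_r$, every $\vec{v}_k'$ is admissible, and yet (ii) fails. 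So the ``only if'' half is false as literally stated; only the ``if'' implication (together with $a_k'\ge 0$) can be proved, which is all the paper proves and all that is used in Theorem 1.8, whose hypotheses do exclude the degenerate pattern. For $a_k=1$ your necessity check is correct, so the problem is confined to $a_k=0$.
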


\begin{proof}
To simplify the notations, we shall drop the index $k$ in $\vec{v}_k$ and $\vec{v}_k^\prime$, simply write it as 
$\vec{v}=(a,b_{1},b_{2}, \cdots, b_{N})$ and $\vec{v}^\prime=(a^\prime,b_{1}^\prime,b_{2}^\prime, \cdots, 
b_{N}^\prime)$.

With this understood, we let $A=aH-\sum_{i=1}^N b_iE_i$ and $A^\prime=a^\prime H-\sum_{i=1}^N b_i^\prime E_i$
where $A^\prime=R(\gamma)(A)=A+(\gamma\cdot A)\gamma$. Note that 
$\gamma\cdot A=a-(b_r+b_s+b_t)$, from which it follows easily that
$$
a^\prime=2a-(b_r+b_s+b_t), \; b_r^\prime=a-(b_s+b_t), \; b_s^\prime=a-(b_r+b_t), \; b_t^\prime=a-(b_r+b_s),
$$
and $b_i^\prime=b_i$ for any $i\neq r,s,t$.

With this understood, consider first the case where $a=1$. Since by assumption $b_{r}+b_{s}+b_{t}\leq 2$, 
it follows easily that $a^\prime\geq 0$, with $a^\prime= 0$ iff $b_{r}+b_{s}+b_{t}=2$. Since $a=1$, it is easy to see
that $b_{r}+b_{s}+b_{t}=2$ if and only if exactly two of $b_{r}, b_{s},b_{t}$ equal to $1$ and the third one equals $0$. It follows easily that $\vec{v}^\prime$ is admissible in this case. 

Next, assume $a=0$. In this case, we have  $b_{r}+b_{s}+b_{t}\leq 0$ by the assumption, which gives 
$a^\prime\geq 0$ immediately,  with $a^\prime= 0$ iff $b_{r}+b_{s}+b_{t}=0$ and $a^\prime>0$  
iff $b_{r}+b_{s}+b_{t}=-1$. It follows easily that $\vec{v}^\prime$ is admissible in this case as well.

Finally, we consider the case where $a\geq 2$. Note that under the condition of $a\geq 2$, one has $a>b_i$ 
for any $i$ (see the proof of Lemma 2.4). The assertion that $\vec{v}^\prime$ is admissible and $a^\prime>0$
follows immediately from the following claim:

\vspace{1mm}

{\bf Claim:} {\it Assume $a\geq 2$. Then for any $i\neq j$, $a\geq b_{i}+b_{j}$ holds true.}

\vspace{1mm}

For a proof, we let $F$ be the component of $D$ whose homological expression is given by the vector 
$\vec{v}$, and let $\hat{F}$ denote the corresponding component in $\hat{D}$, which is a $\hat{J}$-holomorphic curve of degree $a$. 

We begin by recalling the following formula for computing local intersection numbers. Let $C$ be a germ of holomorphic curves (not necessarily irreducible) at $0\in \C^2$, and let $L$ be a germ of embedded holomorphic disk intersecting $C$ only at $0\in \C^2$. We blow up at $0\in \C^2$ and let $E$ be the exceptional $(-1)$-sphere. Let $C^\prime$, $L^\prime$ denote the proper transforms of $C$, $L$ respectively. Then one has
$$
L\cdot C=E\cdot C^\prime+L^\prime\cdot C^\prime. 
$$
In particular, $L\cdot C\geq E\cdot C^\prime$ as $L^\prime\cdot C^\prime\geq 0$. 

Now back to the proof of the claim that $a\geq b_{i}+b_{j}$ for any $i\neq j$. First consider the case where
there exist two distinct minimal classes $E_m$, $E_n$ such that $E_m\leq E_i$, $E_n\leq E_j$.
Let $L$ be the unique degree $1$ $\hat{J}$-holomorphic sphere in $\C\P^2$ passing through 
the points $\hat{E}_m$, $\hat{E}_n$. Then on the one hand, $a=L\cdot \hat{F}$ as $\hat{F}$ is of
degree $a$, and on the other hand, the local intersection number of $L$ with $\hat{F}$ at
$\hat{E}_m$, $\hat{E}_n$ is bounded from below by $b_m, b_n$ respectively. This is because if 
$F^\prime$ denotes the proper transform of $\hat{F}$ after blowing up at $\hat{E}_m$ (resp. 
$\hat{E}_n$), then $b_m=E_m\cdot F^\prime$ (resp. $b_n=E_n\cdot F^\prime$). It follows easily that
$a\geq b_m+b_n\geq b_i+b_j$ by Lemma 3.4(4). 

It remains to consider the case where there is only one minimal class $E_m$ such that $E_m\leq E_i$,
$E_m\leq E_j$. Since $E_i,E_j$ are distinct, there must be a class $E_{i_\alpha}$ which is infinitely near 
to $E_m$ of order $1$ such that $E_{i_\alpha}\leq E_j$ (or $E_i$). With this understood, it suffices to
show that $a\geq b_m+b_{i_\alpha}$. To see this, let $L$ be the unique degree $1$ $\hat{J}$-holomorphic 
sphere in $\C\P^2$ passing through the point $\hat{E}_m$ and tangent to the line $L_\alpha$ in the $4$-ball $B(\hat{E}_m)$ determined by the class $E_{i_\alpha}$ (cf. Description of $\hat{D}$ near the points $\hat{E}_i$, where $E_i\in\E(D)$, i.e., $E_i$ is minimal). Then the local intersection number of $L$ with $\hat{F}$ at the 
point $\hat{E}_m$ equals $b_m+L^\prime\cdot F^\prime$, where $L^\prime,F^\prime$ are the proper transforms of $L$, $\hat{F}$ after blowing up at $\hat{E}_m$. Since $L$ is tangent to $L_\alpha$, $L^\prime$ must be an embedded disk containing 
the center $\hat{E}_{i_\alpha}$ of the $4$-ball $B(\hat{E}_{i_\alpha})$. Consequently, 
$L^\prime\cdot F^\prime\geq b_{i_\alpha}$, and the proof is finished. 
\end{proof}

\begin{remark}
Note that from the proof of Lemma 3.8, for any component $F_k$ of $D$, if its $a$-coefficient $a_k\geq 2$, then for any of its $b_i$-coefficients $b_{ki}, b_{kj}$ where $i,j$ are distinct, one must have
$a_k\geq b_{ki}+b_{kj}$. This is a necessary condition on the homological expression of  $D$, in order for $D$ to be blown-down to a symplectic arrangement in $\C\P^2$ under the successive blowing-down procedure associated to the given homological expression. For a counterexample, note that 
$$
A=5H-3E_1-3E_2-E_3-E_4-\cdots-E_{11}
$$
is an admissible class for a symplectic $(-2)$-sphere, which violates this condition. 

Using a similar, but more elaborate, argument, one can show that when $a_k\geq 3$, the inequality 
$2a_k\geq b_{ki_1}+b_{ki_2}+\cdots +b_{ki_5}$ holds true for any distinct indices $i_1,i_2,\cdots,i_5$.
Note that this inequality does not follow from the one in Lemma 3.8, i.e., $a_k\geq b_{ki}+b_{kj}$ for any 
$i\neq j$. Consider the following admissible class of a symplectic $(-2)$-sphere
$$
B=7H-3E_1-3E_2-3E_3-3E_4-3E_5-E_6-\cdots-E_{11}.
$$
The class $B$ is the result of applying $R(\gamma)$, where $\gamma=H-E_3-E_4-E_5$, to
$$
A=5H-3E_1-3E_2-E_3-E_4-E_5-E_6-\cdots-E_{11}.
$$
Note that the class $B$ obeys the inequality $a\geq b_{i}+b_{j}$ for any $i\neq j$, but it does not obey the inequality 
$2a\geq b_{i_1}+b_{i_2}+\cdots +b_{i_5}$ for any distinct $i_1,i_2,\cdots,i_5$.
\end{remark}

\vspace{2mm}

\noindent{\bf Proof of Theorem 1.8:}

\vspace{2mm}

It is easy to see that the assumptions on the classes $E_r,E_s,E_t$ in Theorem 1.8 imply that the assumptions in Lemma 3.8 hold true. Consequently, for each $k$, $\vec{v}_k^\prime$ is admissible and $a_k^\prime\geq 0$. Thus it remains to show that each $\vec{v}_k^\prime$ is positive with respect to an order of the standard basis 
$H^\prime, E_1^\prime, E_2^\prime,\cdots, E_N^\prime$ to conclude that $F_k\mapsto a_k^\prime H^\prime-\sum_{i=1}^N b_{ki}^\prime E_i^\prime$ is a virtual homological expression of $D$. 

To proceed further, we shall identify smoothly the successive symplectic blowing-down of $(X_N,D)$ to 
$(\C\P^2,\hat{D})$ with the successive blowing-down reversing the successive blowing-up 
$\pi: (\tilde{X}_N,\tilde{D})\rightarrow (\C\P^2,\hat{D})$. The advantage is that the blowing down of the classes $E_i$ does not need to follow strictly the total order given by the reduced basis $H,E_1,E_2,\cdots,E_N$, but rather it is only governed by the partial order defined by the relation of ``infinitely near", which depends only on $(\vec{v}_k)$. We shall regard $H,E_1,E_2,\cdots,E_N$ as classes in $\tilde{X}_N$, but note that it is only a standard, naturally ordered basis now. 

With the preceding understood, it suffices to show that there is a natural partial order of ``infinitely near" 
on the set $\{E_i^\prime\}$, which extends to a total order on the standard basis $H^\prime,E_1^\prime,E_2^\prime,\cdots,E_N^\prime$. Moreover, one can blow down $\tilde{X}_N$ along the classes $E_i^\prime$ according to the partial order, which transforms $\tilde{D}$ to a symplectic arrangement $\hat{D}^\prime$ in $\C\P^2$. As we have seen in the proof of Lemma 2.3, this would imply that each $(\vec{v}_k^\prime)$ is positive with respect to the order of $H^\prime,E_1^\prime,E_2^\prime,\cdots,E_N^\prime$, so that $F_k\mapsto a_k^\prime H^\prime-\sum_{i=1}^N b_{ki}^\prime E_i^\prime$ is a virtual homological expression of $D$. It is clear that its virtual combinatorial type is realized by the symplectic arrangement $\hat{D}^\prime$.

We begin by noting that for any $E_i\neq E_r,E_s$ or $E_t$, $E_i^\prime=E_i$. Moreover, it is easy to see that in any of the cases (1)-(3), for any $E_i,E_j$ not equal to $E_r,E_s$ or $E_t$ such that $E_i<E_j$,
there exists no $E_k\in \{E_r,E_s,E_t\}$, such that $E_i<E_k$ and $E_k<E_j$. As a consequence,
we can define a partial order $\leq$ of ``infinitely near" (resp. a total order) on the subset of $E_i^\prime$ where
$E_i^\prime=E_i$ by restricting the partial order of ``infinitely near" (resp. the natural total order) on the set $\{E_i\}$ to it. Furthermore, when blowing down $\tilde{X}_N$ along the classes $E_i^\prime$, we can first blow
down those $E_i^\prime$ where $E_i^\prime=E_i\neq E_r,E_s$ or $E_t$. Denote by $\check{X}$
the resulting $4$-manifold and by $\check{D}$ the descendant of $\tilde{D}$ in $\check{X}$. 
Then it is easy to see that $(\check{X},\check{D})$ is the result of applying Lemma 3.1 to
$(\C\P^2, \hat{D})$ successively at the points $\hat{E}_r$, $\hat{E}_s$ and $\hat{E}_t$. As such, 
$\check{X}$ has a natural symplectic structure $\check{\omega}$ and an $\check{\omega}$-compatible almost complex structure $\check{J}$, such that $\check{D}$ is $\check{J}$-holomorphic. 
Finally, we note that $H^\prime,E_r^\prime,E_s^\prime,E_t^\prime$ is naturally a standard basis of
$(\check{X},\check{\omega})$, and it remains to extend the partial order $\leq$ to 
$E_r^\prime,E_s^\prime,E_t^\prime$, and to describe how to successively blow down $\check{X}$
along the classes $E_r^\prime,E_s^\prime,E_t^\prime$, transforming $\check{D}$ to a symplectic 
arrangement in $\C\P^2$. The virtual combinatorial type of $(\vec{v}_k^\prime)$, as well as its realization by the symplectic arrangement, will follow automatically. 

With the preceding understood, consider case (1) first, where the classes $E_r,E_s,E_t$ are all minimal. 
In this case, $\hat{E}_r$, $\hat{E}_s$ and $\hat{E}_t$ are points in $\C\P^2$, and moreover, for each pair of them,
i.e., $\hat{E}_s$ and $\hat{E}_t$, $\hat{E}_r$ and $\hat{E}_t$, and $\hat{E}_s$ and $\hat{E}_r$,
there is a unique degree $1$ $\hat{J}$-holomorphic sphere passing through the pair of points, which will be denoted 
by $L_r, L_s$ and $L_t$ respectively. The proper transforms of $L_r, L_s$ and $L_t$ in $\check{X}$,
denoted by $C_r, C_s$ and $C_t$, are $\check{J}$-holomorphic $(-1)$-spheres representing the classes $E_r^\prime$, $E_s^\prime$, and $E_t^\prime$ respectively. (See Figure 2(1) for an illustration.) With this understood, we need to explain how to extend the partial order $\leq$ to the classes $E_r^\prime,E_s^\prime,E_t^\prime$ and how to blow down $C_r, C_s$ and $C_t$. It is clear that each of $E_r^\prime$, $E_s^\prime$, and $E_t^\prime$ is minimal. The question is whether it is also maximal, and how to blow down the corresponding $(-1)$-sphere. There are two cases we need to discuss separately. For simplicity, we shall focus on $C_r$ without loss of generality.

First, consider the case where $C_r$ is not a component of $\check{D}$. In this case, $E_r^\prime$
is not the leading class of any components of $D$ with respect to the basis $H^\prime, E_1^\prime,E_2^\prime,\cdots E_N^\prime$. Hence $E_r^\prime$ should be maximal in the partial order (cf. Lemma 3.4(2)). To blow down $C_r$, we shall perturb it slightly so that it intersects each component of $\check{D}$ transversely and positively (note that $C_r, C_s$ and $C_t$ are disjoint, 
so the perturbation of $C_r$ can be done without interference with $C_s,C_t$), and then symplectively blow down $\check{X}$ along the perturbed $C_r$ as described in \cite{C1}, Section 4. 

Next, consider the remaining case where $C_r$ is a component of  $\check{D}$. In this case, the class
$E_r^\prime$ will not be maximal, and the question is to determine which classes $E_j^\prime$, $j\neq s,t$, are infinitely near to $E_r^\prime$ of order $1$. 

Let $S$ be the component of $D$ whose descendant in $\check{D}$ is $C_r$. Then it is easy to see that the homological expression of $S$ (in the basis $H,E_1,E_2,\cdots, E_N$) takes the form
$$
S=H-E_s-E_t-E_{j_1}-E_{j_2}-\cdots-E_{j_n}, \mbox{ where } j_\beta\neq r, s,t, \forall \beta.
$$
In particular, note that $E_{j_\beta}^\prime=E_{j_\beta}$ for each $\beta$. With this understood, we declare a class $E_j^\prime$, where $j\neq s,t$, is infinitely near to $E_r^\prime$ of order $1$ if and only if $E_j^\prime=E_{j_\beta}$ for some $\beta$ and $E_j^\prime$ is minimal among the classes $E_i^\prime$ where $i\neq r,s,t$. 
 
 Now it comes to the question of how to blow down the $(-1)$-sphere $C_r$. Since it is a component 
 of $\check{D}$, we can no longer perturb it before blowing it down. With this understood, recall that
 in the process of successive blowing down, at each stage $m$, after blowing down the class $E_m$,
 the resulting standard symplectic $4$-ball $B(\hat{E}_m)$ in the next stage $X_{m-1}$ has a special
 complex coordinate $z_1,z_2$ such that each component of the descendant of $D$ inside 
$B(\hat{E}_m)$ is given by an equation of the form $z_1^p=cz_2^q$ for some $c\neq 0$ (here $p=q=1$ is allowed), or $z_1=0$, or $z_2=0$. This said, the key issue for blowing down $C_r$ is to be able to arrange, at each stage $m$ of the successive blowing down of the classes $E_i$, where $i\neq r,s,t$, such that the descendant of the component $S$ is given by either $z_1=0$ or $z_2=0$ in the $4$-ball $B(\hat{E}_m)$. The assumptions (a) and (b) are designed to ensure this for the components of $D$ whose $a$-coefficient is zero. However, $S$ does not have zero $a$-coefficient in the basis $H,E_1,E_2,\cdots, E_N$, 
so the assumptions (a), (b) on the homological expression given by $(\vec{v}_k)$ do not apply to the component $S$. This is where the assumptions (a), (b) on the virtual homological expression 
$F_k\mapsto a_k^\prime H^\prime-\sum_{i=1}^N b_{ki}^\prime E_i^\prime$ are needed. 

For an illustration, suppose $E_i$ is a maximal class in the partial order $\leq$ for the basis
$H,E_1,E_2,\cdots, E_N$, and let the following be the portion of the linear chain associated to
$E_i$ which lies in the complement of $E_r,E_s,E_t$, such that the minimal element $E_{i_1}$ of the portion is contained in the expression of $S$, i.e., $E_{i_1}=E_{j_\beta}$ for some $\beta$:
$$
E_{i_1}<E_{i_2}<\cdots < E_{i_{m-1}}<E_{i_m}<E_i.
$$
If we denote by $E_\alpha$ the first class in the chain above (from right to left) which appears in the expression of $S$, then before the class $E_\alpha$, the successive blowing down procedure does not involve the component $S$. Since this is only for an illustration, we will assume for simplicity that 
$E_\alpha=E_i$. With this understood, let $S_1$ be the component of $D$ whose leading class is
$E_{i_m}$. Then the expression of $S_1$ takes the form
$$
S_1=E_{i_m}-E_i-E_{k_1}-\cdots-E_{k_p}. 
$$
Note that $S\cdot S_1\geq 0$ implies that $E_{i_m}$ must appear in the expression of $S$, and 
$S\cdot S_1=0$ must be true. The same argument shows that all the classes $E_{i_1}, \cdots,
E_{i_{m-1}}$ must also appear in the expression of $S$. Furthermore, it is easy to see that there
is no class $E_j$ with $E_j<E_{i_{m-1}}$, such that $E_{i_m}$ appears in the expression of the component $S^\prime$ of $D$ whose leading class is $E_j$, because by the same argument $E_{i_{m-1}}$ also has to appear in the expression of $S^\prime$, which implies $S\cdot S^\prime<0$, a contradiction. 
What this means is that regarding the assumptions (a), (b), for the class $E_{i_m}$ it falls to the case (b). With this understood, it is possible that one of the classes $E_{k_1}, \cdots, E_{k_p}$, say $E_{k_1}$ without loss of generality, has the following property: $E_{k_1}$ does not appear in the expression of the component $S_2$ of $D$ whose leading class is $E_{i_{m-1}}$ and there is a component $F$ of $D$
such that $E_{k_1}\cdot F>1$. In this scenario, when we blow down $E_{i_m}$, the axes 
$z_1=0$ and $z_2=0$ in the $4$-ball $B(\hat{E}_{i_m})$ have to be occupied by the descendant of
$S_2$ and $F$ respectively, and there is no room for the descendant of $S$. With this understood, we have to eliminate the possibility of $F$ in order to make room for the descendant of $S$. But since 
in the basis $H^\prime,E_1^\prime, \cdots,E_N^\prime$, $S$ has zero $a$-coefficient, with leading class 
$E_{r}^\prime$, so when we apply the assumptions (a), (b) for the virtual homological expression 
$F_k\mapsto a_k^\prime H^\prime-\sum_{i=1}^N b_{ki}^\prime E_i^\prime$ to the class 
$E_{i_m}=E_{i_{m}}^\prime$, we are in the case (a) and therefore the component $F$ is not allowed 
under the assumption. This ensures that the descendant of $S$ in the $4$-ball $B(\hat{E}_{i_m})$
is given by one of the axes $z_1=0$ or $z_2=0$. This proves that at each stage $m$ of the successive blowing down of the classes $E_i$, where $i\neq r,s,t$, the descendant of the component $S$ is given by either 
$z_1=0$ or $z_2=0$ in the $4$-ball $B(\hat{E}_m)$, so that the $(-1)$-sphere $C_r$ can be blown down properly.
This completes the discussions on case (1). 

Next, we consider case (2). In case (2), only $\hat{E}_r$, $\hat{E}_s$ are points in
$\C\P^2$. Let $L_t$ be the unique degree $1$ $\hat{J}$-holomorphic sphere passing through
$\hat{E}_r$, $\hat{E}_s$, and let $L_r$ be the unique degree $1$ $\hat{J}$-holomorphic sphere
passing through $\hat{E}_s$ whose proper transform after blowing up at $\hat{E}_s$ contains the point $\hat{E}_t$ which is infinitely near to $\hat{E}_s$. (Note that by the assumption in case (2), $L_r\neq L_t$.) 
It is easy to see that the proper transforms of $L_t,L_r$ in
$\check{X}$, denoted by $C_t, C_r$ respectively, are $\check{J}$-holomorphic $(-1)$-spheres representing the classes $E_t^\prime=H-E_r-E_s$ and $E_r^\prime=H-E_s-E_t$. On the other hand, since $E_t$ is infinitely near to $E_s$ of order $1$, there is a component $\check{S}$ in $\check{D}$ which is $\check{J}$-holomorphic $(-2)$-sphere representing the class $E_s-E_t=E_s^\prime-E_t^\prime$. Note that the corresponding component 
$S$ in $D$ has a zero $a$-coefficient, with $E_s$ being the leading class of $S$. Finally, note that $C_t$ intersects $\check{S}$ transversely at one point, and $C_r$ is disjoint from $\check{S}$. (See Figure 2(2) for an illustration.)

With the preceding understood, assume first that none of $C_t,C_r$ is a component of $\check{D}$.
In this case, we shall slightly perturb each of them if necessary so that they intersect $\check{D}$ 
transversely and positively. Then we blow down successively the perturbed $C_t$, then the 
descendant of $\check{S}$ which has class $E_s^\prime$, then the perturbed $C_r$, reaching
$\C\P^2$. We should point out that since $\check{S}$ is a component of $\check{D}$, we need to use the assumptions (a), (b) for the virtual homological expression 
$F_k\mapsto a_k^\prime H^\prime-\sum_{i=1}^N b_{ki}^\prime E_i^\prime$ to ensure that the descendant 
of $\check{S}$ can be blown down properly. To see this, simply note that the corresponding component $S$ in $D$
has the following expression in the reduced basis $H,E_1,E_2,\cdots, E_N$:
$$
S=E_s-E_t-E_{j_1}-E_{j_2}-\cdots-E_{j_n}, \mbox{ where } j_\beta\neq r, s,t, \forall \beta.
$$
It follows easily that, in the basis $H^\prime,E_1^\prime,E_2^\prime,\cdots, E_N^\prime$, we have
$$
S=E_s^\prime-E_t^\prime-E_{j_1}^\prime-E_{j_2}^\prime-\cdots-E_{j_n}^\prime, \mbox{ where } j_\beta\neq r, s,t, \forall \beta,
$$
and the assumptions (a), (b) ensure that at each stage $m$ of the successive blowing down of the classes $E_i$, where $i\neq r,s,t$, the descendant of the component $S$ is given by either 
$z_1=0$ or $z_2=0$ in the $4$-ball $B(\hat{E}_m)$, as $E_i=E_i^\prime$, and at the last step blowing down 
the perturbed $C_t$, it is also given by $z_1=0$ or $z_2=0$ in the $4$-ball $B(\hat{E}_t^\prime)$. Finally,
we note that under the assumption that none of $C_t,C_r$ is a component of $\check{D}$,
both $E_t^\prime,E_r^\prime$ are maximal. Moreover, $E_s^\prime,E_r^\prime$
are both minimal, and $E_t^\prime$ is infinitely near to $E_s^\prime$ of order $1$. So to extend the partial order
$\leq$ to the classes $E_r^\prime,E_s^\prime,E_t^\prime$, we only need to decide which classe $E_j^\prime$,
for $j\neq r,s,t$, is infinitely near to $E_s^\prime$ of order $1$. It is clear that $E_j^\prime$ is infinitely near to $E_s^\prime$ of order $1$ if and only if $E_j^\prime=E_{j_\beta}$ for some $\beta$ in the expression of the component $S$ and $E_j^\prime$ is minimal among the classes $E_i^\prime$ where $i\neq r,s,t$. This finishes the
case where none of $C_t,C_r$ is a component of $\check{D}$.

If any of $C_t,C_r$, say $C_r$, is a component of $\check{D}$, we shall proceed exactly the same way as in case (1). More precisely, in this case, the class $E_r^\prime$ may not be maximal, and we need to make sure the $(-1)$-sphere $C_r$ can be blown down properly. The assumptions (a), (b) ensure that  $C_r$ can be blown down properly, and we can determine which class $E_j^\prime$ is infinitely near to $E_r^\prime$ of order $1$, by exactly the same arguments as in case (1). The discussion concerning the class $E_s^\prime$ and the blowing-down of the corresponding $(-1)$-sphere remain the same. We leave the details to the reader. 

Finally, consider case (3). In this case, only $\hat{E}_r$ is a point of $\C\P^2$. There is a unique degree 
$1$ $\hat{J}$-holomorphic sphere passing through $\hat{E}_r$, denoted by $L_t$, whose proper transform after blowing up at $\hat{E}_r$ contains the point $\hat{E}_s$ which is infinitely near to $\hat{E}_r$ of order $1$. Let $C_t$ be the proper transform of $L_t$ in $\check{X}$. Then $C_t$ is a $\check{J}$-holomorphic $(-1)$-sphere representing the class $H-E_r-E_s=E_t^\prime$. On the other hand, there are two $\check{J}$-holomorphic $(-2)$-spheres $\check{S}_1$, $\check{S}_2$ in $\check{D}$ representing the classes $E_r-E_s=E_r^\prime-E_s^\prime$,
$E_s-E_t=E_s^\prime-E_t^\prime$ respectively. (See Figure 2(3) for an illustration.) 
Blowing down $C_t$ (if $C_t$ is not a component of
$\check{D}$ we perturb it slightly so that it intersects $\check{D}$ transversely and positively), and
then the descendant of $\check{S}_2$ which has class $E_s^\prime$, then the descendant of $\check{S}_1$
which has class $E_r^\prime$, we reach $\C\P^2$. The justification that these $(-1)$-spheres can be blown down
properly by appealing to the assumptions (a), (b), and the extension of the partial order $\leq$ to the classes $E_r^\prime, E_s^\prime,E_t^\prime$, are done in exactly the same way as in the cases (1) and (2). 

The proof of Theorem 1.8 is completed.

\section{Gromov theory and holomorphicity of symplectic arrangements}

In this section, we give a proof of Theorem 1.11. Our proof  is based on Gromov's theory of pseudoholomorphic curves \cite{G}, see also \cite{Bar, HLS, IvaS, Shev, ST}. In particular, for the structure of the moduli space of pseudoholomorphic curves, we shall adapt the approach of Ivashkovich and Shevchishin in \cite{IvaS}, and for automatic transversality of the moduli space, it is based on the work of Hofer, Lizan, and Sikorav \cite{HLS}. 

\subsection{Structure of moduli spaces and automatic transversality}
We begin by reviewing the relevant work in \cite{IvaS} concerning the structure of the moduli space of pseudoholomorphic curves. To this end, consider a symplectic $4$-manifold $(X,\omega)$ and a nonzero homology class $A\in H_2(X)$, and fix a compact oriented surface $S$ of genus $g$. Fix an
$0<\alpha<1$, and let $\J$ be the Banach manifold of $\omega$-tame, $C^{1,\alpha}$-almost complex structures on $X$, and $\J_S$ be the Banach manifold of $C^{1,\alpha}$-almost complex structures on $S$ (compatible with the orientation of $S$). Finally, fix a $2<p<\infty$ and consider the Banach
manifold of $L^{1,p}$-maps from $S$ to $X$
$$
\S:=\{u\in L^{1,p}(S,X)| [u(S)]=A\}. 
$$
With this understood, let $\p$ be the subset of $\S\times \J_S\times \J$, where
$$
\p:=\{(u,J_S,J)\in \S\times \J_S\times \J | du+J\circ du\circ J_S=0\},
$$
and denote by ${\bf pr}_\J: \p\rightarrow \J$ the projection to the third factor. 

For each $(u,J_S,J)\in \p$, the map $u$ is a $(J_S,J)$-holomorphic map, and the image $u(S)$ in
$X$ is called a $J$-holomorphic curve. We shall be interested in the case where the map $u$ is
{\bf simple}, i.e., the map $u: S\rightarrow u(S)$ is generically one to one, which then defines a
parametrization of the $J$-holomorphic curve $C:=u(S)$. In this case, $C:=u(S)$ is called a
{\bf genus-$g$ $J$-holomorphic curve carrying a homology class $A$}. We denote by $\M_{A,g,J}$ the set of all such $J$-holomorphic curves $C:=u(S)$ where $u$ is simple. With this understood, we are interested in 
the structure of each $\M_{A,g,J}$, the space $\M_{A,g}:=\sqcup_{J\in\J} \M_{A,g,J}$, and the natural projection 
$\M_{A,g}\rightarrow \J$. 

To this end, let $\G$ be the Banach group of $C^{2,\alpha}$-diffeomorphisms of $S$ which preserve the orientation. Then there is a natural action of $\G$ on $\p$ with a natural projection $\overline{{\bf pr}}_\J:\p/\G\rightarrow \J$.
Moreover, $\G$ acts freely near each $(u,J_S,J)\in \p$ where $u$ is simple, and 
$\M_{A,g}$ is an open subset of $\p/\G$. The projection $\M_{A,g}\rightarrow \J$ is simply the restriction of $\overline{{\bf pr}}_\J:\p/\G\rightarrow \J$ to the open subset $\M_{A,g}$, and the moduli space 
$\M_{A,g,J}=\overline{{\bf pr}}_\J^{-1}(J)\subset \M_{A,g}$ for each $J\in\J$.

With the preceding understood, let $(u,J_S,J)\in\p$ where $u$ is simple. Let $E:=u^\ast(TX)$ be the pull-back bundle over $S$, and fix a torsion-free connection $\nabla$ on $TX$. Then the linearization of the equation $du+J\circ du\circ J_S=0$ defines an elliptic operator 
$D_{u,J}: L^{1,p}(S,E)\rightarrow L^p(S, \Lambda^{0,1}S \otimes E)$, which is given by
$$
D_{u,J}(v):=\frac{1}{2}(\nabla v+J\circ\nabla v\circ J_S+ (\nabla_v J)\circ (du\circ J_S)), \; \forall 
v\in L^{1,p}(S,E).
$$
Furthermore, $D_{u,J}=\bar{\partial}_{u,J}+R$ where $\bar{\partial}_{u,J}$ is the $J$-linear part and is
an operator of Cauchy-Riemann type, and $R$ is of zero order. With this understood, it was shown
in \cite{IvaS} that $\bar{\partial}_{u,J}$ defines a holomorphic structure on $E:=u^\ast(TX)$, and with
that understood, $du: \O(TS)\rightarrow \O(E)$ is an injective analytic morphism of analytic sheaves. The quotient
sheave $\N:=\O(E)/du(\O(TS))=\O(N_0)\oplus \N_1$, where $N_0$ is a holomorphic line bundle over
$S$ and $\N_1=\oplus_i \C^{n_i}_{a_i}$. Here $\C^{n_i}_{a_i}$ denotes the sheave which is supported 
at the critical points $a_i$ of $du: \O(TS)\rightarrow \O(E)$ and has a stalk $\C^{n_i}$ where $n_i$
is the order of zero of $du$ at $a_i$. 

The operator $D_{u,J}$ induces a so-called {\bf Gromov operator} $D_{u,J}^N:
L^{1,p}(S,N_0)\rightarrow L^p(S,\Lambda^{0,1} S\otimes N_0)$. Furthermore, 
$D_{u,J}^N=\bar{\partial}+R$ where $\bar{\partial}$ is the Cauchy-Riemann operator for the holomorphic line bundle $N_0$ and $R$ is of zero order. With this understood, we introduce the
$D$-cohomologies:
$$
H^0_D(S,N_0):=\ker D, \;\; H^1_D(S,N_0):=\text{coker } D, \mbox{ where } D:=D_{u,J}^N.
$$

\begin{lemma}
(cf. \cite{IvaS}) The map $({\bf pr}_\J)_\ast: T_{(u,J_S,J)}\p \rightarrow T_J \J$ is surjective if and only if $H^1_D(S,N_0)=0$. Moreover, with $H^1_D(S,N_0)=0$, a neighborhood of $C:=u(S)$ in
$\M_{A,g,J}$ is a smooth manifold with $T_C \M_{A,g,J}= H^0_D(S,N_0)\oplus H^0(S,\N_1)$.
\end{lemma}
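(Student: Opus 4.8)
The plan is to treat this as the standard transversality-plus-implicit-function-theorem package for the universal moduli problem, adapted to the domain-dependent setting of \cite{IvaS}, and to reduce the surjectivity question to the vanishing of the cokernel of the normal (Gromov) operator. First I would write $\p$ as the zero set of the universal section $F(u,J_S,J):=du+J\circ du\circ J_S$, valued in $L^p(S,\Lambda^{0,1}S\otimes E)$, and compute its full linearization at $(u,J_S,J)$ in a direction $(v,j_S,Y)$:
$$
DF(v,j_S,Y)=D_{u,J}(v)+Y\circ du\circ J_S+J\circ du\circ j_S.
$$
Then $T_{(u,J_S,J)}\p=\ker DF$ once $\p$ is known to be a Banach manifold near the simple triple. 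Establishing the latter is the transversality step: since $u$ is simple, $du\neq 0$ on a dense open set, and the standard unique-continuation argument shows that as $Y$ ranges over $T_J\J$ the term $Y\circ du\circ J_S$ meets every class in $\mathrm{coker}\,D_{u,J}$, so $DF$ is surjective and $\p$ is smooth.

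Next I would extract the projection. By definition $({\bf pr}_\J)_\ast(v,j_S,Y)=Y$, so this map is surjective exactly when for every $Y\in T_J\J$ the inhomogeneous equation
$$
D_{u,J}(v)+J\circ du\circ j_S=-\,Y\circ du\circ J_S
$$
is solvable in $(v,j_S)$. The heart of the matter is that the tangential $v$-variations along $du(\O(TS))$ together with the $j_S$-variations exactly exhaust the tangential part of the deformation problem. Concretely, I would invoke the injective analytic sheaf morphism $du:\O(TS)\to\O(E)$ of \cite{IvaS} and the short exact sequence $0\to\O(TS)\xrightarrow{du}\O(E)\to\N\to 0$ with $\N=\O(N_0)\oplus\N_1$. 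Projecting $D_{u,J}$ to the normal quotient produces the Gromov operator $D^N_{u,J}$ on $N_0$; the reparametrization and domain-structure directions are absorbed into the $\O(TS)$ summand, while the $\N_1$ summand, supported at the critical points of $du$, carries no obstruction. Hence the obstruction to surjectivity of $({\bf pr}_\J)_\ast$ is precisely $\mathrm{coker}\,D^N_{u,J}=H^1_D(S,N_0)$, which yields the stated equivalence.

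For the smoothness claim I would pass to $\p/\G$ and the restricted projection $\overline{{\bf pr}}_\J$. Since $u$ is simple the $\G$-action is free near $(u,J_S,J)$, so $\p/\G$ is a Banach manifold and $\M_{A,g}$ is an open subset. When $H^1_D(S,N_0)=0$ the first part makes $\overline{{\bf pr}}_\J$ a submersion at $C$, so the fibre $\M_{A,g,J}=\overline{{\bf pr}}_\J^{-1}(J)$ is a smooth submanifold near $C$ by the implicit function theorem. Its tangent space is $\ker D_J$ modulo the reparametrization directions, where $D_J(v,j_S):=D_{u,J}(v)+J\circ du\circ j_S$; using the splitting $\N=\O(N_0)\oplus\N_1$ once more, the normal part of the kernel is $H^0_D(S,N_0)=\ker D^N_{u,J}$ and the critical-point part is $H^0(S,\N_1)$, giving $T_C\M_{A,g,J}=H^0_D(S,N_0)\oplus H^0(S,\N_1)$.

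The main obstacle is the middle step: rigorously separating the tangential and normal parts of the linearized operator in the $L^{1,p}$/$L^p$ Banach setting, i.e.\ showing that the $j_S$-variations and the tangential $v$-variations span exactly the image of $du$ at the sheaf level, with the correct bookkeeping at the critical points contributing $\N_1$, so that the cokernel of $DF$ along the fibre direction is canonically $H^1_D(S,N_0)$. The similarity principle (Aronszajn--Carleman unique continuation) underlying the holomorphic structure $\bar\partial_{u,J}$ on $E$ is what makes this identification clean, and is the technical input I would lean on most heavily; everything else is the formal transversality and implicit-function-theorem mechanism.
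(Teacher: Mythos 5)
The paper does not prove this lemma: it is stated with ``(cf.~\cite{IvaS})'' and quoted as background from Ivashkovich--Shevchishin, so there is no in-paper argument to compare against. Your sketch follows exactly the route of \cite{IvaS} (universal moduli space as the zero set of $F(u,J_S,J)=du+J\circ du\circ J_S$, surjectivity of the full linearization via unique continuation for simple maps, and reduction of the fibre-direction cokernel to $\mathrm{coker}\,D^N_{u,J}=H^1_D(S,N_0)$ through the normal sheaf sequence $0\to\O(TS)\to\O(E)\to\O(N_0)\oplus\N_1\to 0$), and the step you flag as the main obstacle --- rigorously splitting off the tangential and $j_S$-directions and the torsion part $\N_1$ at the level of the non-$\C$-linear operator $D=\bar\partial+R$ --- is precisely the technical content carried out in \cite{IvaS}; your outline is correct as a proof plan, with that step deferred to the reference just as the paper defers the whole lemma.
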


For the condition $H^1_D(S,N_0)=0$, we recall the ``automatic transversality" theorem in \cite{HLS},
i.e., if $c_1(N_0)(S)>2g-2$ where $g$ is the genus of $S$, then $H^1_D(S,N_0)=0$. 

We remark that in the special case where all the $J$-holomorphic curves in $\M_{A,g,J}$, $J\in\J$,
are smoothly immersed, $N_0$ is simply the normal bundle and the sheave $\N_1$ is trivial. Moreover, one has the adjunction formula $c_1(N_0)(S)-(2g-2)=c_1(TX)(A)$. Thus in this case, under the topological condition 
$c_1(TX)(A)>0$, each space $\M_{A,g,J}$ is a smooth manifold of dimension
$$
\dim_\R \M_{A,g,J}=2(c_1(N_0)(S)+(1-g))=2(c_1(TX)(A)+(g-1))\geq 0
$$
by the Riemann-Roch Theorem. Furthermore, $\M_{A,g}$ is a Banach manifold and the projection 
$\overline{{\bf pr}}_\J:\M_{A,g}\rightarrow \J$ is a submersion of Banach manifolds. 

We summarize the discussion in the following lemma.

\begin{lemma}
(cf. \cite{HLS}) Assume each $J$-holomorphic curve in $\M_{A,g,J}$ is smoothly immersed, and moreover,
$c_1(TX)(A)>0$. Then the space $\M_{A,g,J}$ is a smooth manifold of dimension
$$
\dim_\R \M_{A,g,J}=2(c_1(TX)(A)+(g-1))\geq 0.
$$
Moreover, $\M_{A,g}$ is a Banach manifold and the projection $\overline{{\bf pr}}_\J:\M_{A,g}\rightarrow \J$ is a submersion of Banach manifolds. 
\end{lemma}

For the purpose in this paper, we shall be interested in the case where $X=\C\P^2$, with $\omega$ a K\"{a}hler form. Moreover, $S$ is of genus $0$. More precisely, for $d=1,2$, we shall consider
the space $\M(d,J)$ of degree $d$ $J$-holomorphic spheres in $\C\P^2$. Such spheres are always smoothly embedded, and the condition $c_1(TX)(A)=3d>0$ is satisfied. As a consequence, each 
$\M(d,J)$ is a smooth manifold of dimension 
$$
\dim_\R\M(d,J)=6d-2, \mbox{ where } d=1,2.
$$
Moreover, $\overline{{\bf pr}}_\J:\M(d):=\sqcup_{J\in\J} \M(d,J)\rightarrow \J$ is a submersion of Banach manifolds. 

With this understood, let $\{x_i\}$ be a finite set of distinct points in $X=\C\P^2$. For each $i$, we 
assign an integer $a_i>0$ to $x_i$, called the {\bf multiplicity}, and for each $i$ where $a_i>1$, we fix a $J$-invariant plane $T_i$ in the tangent space $T_{x_i} X$. We denote by ${\bf x}$ the data set
$\{(x_i,a_i,T_i)\}$. With this understood, we let $\M(d,J, {\bf x})$ be the subset of $\M(d,J)$, which consists of $J$-holomorphic spheres $C$ of degree $d$, such that for each $i$, $x_i\in C$, and furthermore, if $a_i>1$, we require the tangent space $T_{x_i} C$ intersects the $J$-invariant plane $T_i$ with a tangency of order at least $a_i$. We shall call $\{x_i\}$ the {\bf fixed points} associated to $\M(d,J, {\bf x})$ and $T_i$ the {\bf fixed tangent plane} at $x_i$.

Concerning the structure of the subset $\M(d,J, {\bf x})$, 
let $C\in \M(d,J, {\bf x})$, and $u: S\rightarrow X$ be a $J$-holomorphic map parametrizing $C$.
Let $z_i:=u^{-1}(x_i)$ for each $i$. Then the Gromov operator describing the structure of 
$\M(d,J, {\bf x})$ takes the form 
$$
D_{u,J}^N: L^{1,p}(S,N_0\otimes [A])\rightarrow L^p(S, \Lambda^{0,1}S \otimes (N_0\otimes [A])),
$$
where $[A]:=\sum_i -a_i [z_i]$ is a divisor on $S$ (cf. \cite{Bar, Shev}). With this understood, 
the ``automatic transversality" condition in this case is
given by $H^1_D(S, N_0\otimes [A])=0$, and with that, $\M(d,J, {\bf x})$ is a smooth manifold of dimension equaling $\dim_\R H^0_D (S, N_0\otimes [A])$. More specifically, one has the following lemma. 

\begin{lemma}
(cf. \cite{Shev})
Let ${\bf x}=\{(x_i,a_i,T_i)\}$, and assume that $3d-\sum_i a_i>0$, which means $\sum_i a_i\leq 2$ for $d=1$ 
and $\sum_i a_i\leq 5$ for $d=2$. Then $\M(d,J, {\bf x})$ is a smooth manifold of dimension 
$$
\dim_\R\M(d,J, {\bf x})= 6d-2-\sum_i 2a_i\geq 0.
$$
\end{lemma}
For convenience we shall introduce the following terminology: under the above conditions, we shall say that the moduli space $\M(d,J, {\bf x})$ is {\bf transversely cut-out}. Note that in this case, the space
$\M(d,{\bf x}):=\sqcup_{J\in\J} \M(d,J, {\bf x})$ is a Banach manifold and the projection 
$\overline{{\bf pr}}_\J:\M(d,{\bf x})\rightarrow \J$ is a submersion.

\vspace{2mm}

In order to analyze the moduli spaces of symplectic arrangements formed out of $J$-holomorphic spheres in
$\M(d,J, {\bf x})$, we shall add marked points to the $J$-holomorphic curves in $\M(d,J, {\bf x})$, which is done as follows. For simplicity, we shall only look at the case of adding a single marked point in details; the general case of multiple points is completely analogous and one can simply repeat the procedure. To this end, let 
$C\in \M(d,J, {\bf x})$, and let $u:S\rightarrow X$ be a $J$-holomorphic parametrization of $C$, where 
$z_i:=u^{-1}(x_i)\in S$ are the existing marked points on $S$. For any $z\in S\setminus \{z_i\}$, we add $z$ 
as a new marked point on $S$, and consider the map $u: (S,z_i,z)\rightarrow X$ from a marked two-sphere to $X$. The space of equivalence classes of such $u$ modulo reparametrizations of the marked two-sphere $S$ will be denoted by $\M(d,J, {\bf x}; 1)$, which is a smooth manifold of dimension 
$$
\dim_\R \M(d,J, {\bf x}; 1)=\dim_\R \M(d,J, {\bf x})+2.
$$ 
Furthermore, there is a well-defined 
evaluation map $ev: \M(d,J, {\bf x}; 1)\rightarrow X\setminus \{x_i\}$, sending $[u]$ to $u(z)$. Note that
for any $y\in X\setminus \{x_i\}$, the pre-image $ev^{-1}(y)$, if nonempty, can be identified with the
space $\M(d,J, {\bf x}^\prime)$, where ${\bf x}^\prime$ is the data set obtained from ${\bf x}$ by
adding $y$ to the set $\{x_i\}$ and giving it with a multiplicity $1$, i.e., 
$$
ev^{-1}(y)= \M(d,J, {\bf x}^\prime), \mbox{ where } {\bf x}^\prime:=\{y, (x_i,a_i,T_i)\}. 
$$
With this understood, it is easy to see that $y$ is a regular value of the evaluation map 
$ev: \M(d,J, {\bf x}; 1)\rightarrow X\setminus \{x_i\}$
if and only if $\M(d,J, {\bf x}^\prime)$ is transversely cut-out, which is guaranteed if 
$\dim_\R \M(d,J, {\bf x}^\prime)\geq 0$, or equivalently, $\dim_\R \M(d,J, {\bf x})\geq 2$. 
More generally, we denote by $\M(d,J, {\bf x}; k)$ the corresponding space of 
$J$-holomorphic curves with $k$ distinct marked points being added, which is a smooth manifold of dimension 
$$
\dim_\R \M(d,J, {\bf x}; k)=\dim_\R \M(d,J, {\bf x})+2k.
$$
Moreover, there is a well-defined evaluation map $ev$ from $\M(d,J, {\bf x}; k)$ to the $k$-fold product of $X\setminus \{x_i\}$, which is a submersion when $\dim_\R \M(d,J, {\bf x})\geq 2k$. 

The moduli space of marked $J$-holomorphic curves $\M(d,J, {\bf x}; k)$ allows us to describe the space of arrangements of $J$-holomorphic curves with a prescribed, transverse, intersection pattern
in a convenient way. For the simplest situation, consider, for $j=1,2$, the moduli spaces 
$\M(d_j,J, {\bf x}_j)$, which are transversely cut-out. Let $\M_J$ be the space of pairs 
$(C_1,C_2)\in \M(d_1,J, {\bf x}_1)\times \M(d_2,J, {\bf x}_2)$ where $C_1,C_2$ intersect transversely at one point which lies in the complement of the fixed points associated to the spaces 
$\M(d_j,J, {\bf x}_j)$, $j=1,2$. If we consider the spaces of marked $J$-curves $\M(d_j,J,{\bf x}_j; 1)$, with evaluation map $ev_j: \M(d_j,J,{\bf x}_j; 1) \rightarrow X$, and let $\Delta$ denote the diagonal of $X\times X$, 
then it is easy to see that $\M_J$ can be regarded as a subset of $(ev_1\times ev_2)^{-1}(\Delta)$.
Moreover, the transversality condition on $\M_J$ is fulfilled if the map $ev_1\times ev_2$ is transversal to $\Delta$ at the points in $ev_1\times ev_2(\M_J)$, in which case $\M_J$ is a smooth manifold, transversely cut-out, of dimension 
$$
\dim_\R \M_J=\sum_{j=1}^2 \dim_\R \M(d_j,J,{\bf x}_j).
$$
See Lemma 4.4 below. 

For our purpose in this paper, we also need to consider a slightly more general situation. Let 
$C\subset X$ be a given embedded $J$-holomorphic curve. Let $\M_J(C)$ be the subset of $\M_J$
consisting of pairs $(C_1,C_2)$, where the intersection point of $C_1,C_2$ lies in $C$, and the corresponding triple intersection point of $C_1,C_2$, and $C$ is a transversal intersection pairwise. It follows easily that
$\M_J(C)$ may be regarded as a subset of $(ev_1\times ev_2)^{-1}(\Delta\cap (C\times C))$. 

We observe the following lemma.

\begin{lemma}
(1) For $d_j=1$ or $2$, assume both of $\M(d_j,J, {\bf x}_j)$, $j=1,2$, are transversely cut-out. Then so is $\M_J$,
and $\dim_\R \M_J=\sum_{j=1}^2 \dim_\R \M(d_j,J,{\bf x}_j).$

(2) If in addition, at least one of the spaces $\M(d_j,J, {\bf x}_j)$, $j=1,2$, has a positive dimension, then the map $ev_1\times ev_2$ is transversal to $\Delta\cap (C\times C)$ at the points in $ev_1\times ev_2(\M_J(C))$, and 
$\M_J(C)$ is a smooth manifold of dimension 
$$
\dim_\R \M_J(C)=\sum_{j=1}^2 \dim_\R \M(d_j,J,{\bf x}_j)-2.
$$
\end{lemma}

\begin{proof}
For (1), note first that since $d_j=1$ or $2$, the $J$-curves in $\M(d_j,J, {\bf x}_j)$, $j=1,2$, are smoothly 
embedded. As a consequence, for any $J$-curve $C_j\in \M(d_j,J, {\bf x}_j)$, and any point $y\in C_j$,
if we regard $C_j$ naturally as an element of $\M(d_j,J, {\bf x}_j;1)$, then the map
$(ev_j)_\ast: T_{C_j} \M(d_j,J, {\bf x}_j;1)\rightarrow T_yC_j$ is surjective, where the vectors in 
$T_{C_j} \M(d_j,J, {\bf x}_j;1)$ sending nontrivially onto $T_yC_j$ are given by the variation of the marked point 
$z\in S\setminus \{z_i\}$ in the domain of the $J$-holomorphic parametrization $u: S\rightarrow X$ of $C_j$.
With this understood, assume $(C_1,C_2)\in\M_J$ and $y$ is the transverse intersection point of $C_1,C_2$.
We consider the homomorphism $\rho: T_y X\times T_y X\rightarrow T_y X$ defined by $\rho(v_1,v_2)=v_1-v_2$,
which is surjective with the kernel being precisely $T_{(y,y)}\Delta$. It follows easily that the map
$ev_1\times ev_2$ is transversal to $\Delta$ at $(y,y)\in \Delta$ if and only if the composition 
$\rho\circ (ev_1\times ev_2)_\ast$ is surjective. But this is easily seen to be true, as $y$ is the transverse intersection point of $C_1,C_2$, so that for any $v\in T_y X$, there is a unique pair 
$(v_1,v_2)\in T_y C_1\times T_y C_2$ such that $v=v_1-v_2=\rho(v_1,v_2)$, while on the other hand, as we have shown already that $(ev_j)_\ast: T_{C_j} \M(d_j,J, {\bf x}_j;1)\rightarrow T_yC_j$ is surjective for each $j=1,2$.
This finishes the proof for  (1).

For (2), the argument is similar, where instead of considering $\rho: T_y X\times T_y X\rightarrow T_y X$, we shall
replace it by $\tilde{\rho}: T_y X\times T_y X\rightarrow T_y X\times (\nu_C)_y$, where $\nu_C$ is the normal bundle of the embedded $J$-holomorphic curve $C$ in $X$, and 
$\tilde{\rho}(v_1,v_2)=(v_1-v_2, \pi(v_1)+\pi(v_2))$, with
$\pi: T_y X\rightarrow (\nu_C)_y$ being the projection. With this understood, note that the kernel of $\tilde{\rho}$
is precisely $T_{(y,y)}(\Delta\cap (C\times C))$. Now without loss of generality, assume $\M(d_1,J, {\bf x}_1)$ has a 
positive dimension. Then we note that $(ev_1)_\ast: T_{C_1} \M(d_1,J, {\bf x}_1;1)\rightarrow T_y X$ is 
surjective. With this understood, it is easy to see that $\M_J(C)$ is transversely cut-out, and is a smooth manifold of dimension
$$
\dim_\R \M_J(C)=\sum_{j=1}^2 \dim_\R \M(d_j,J,{\bf x}_j)-2,
$$
if $\tilde{\rho}: T_y X\times T_y C_2\rightarrow T_y X\times (\nu_C)_y$ is surjective. To see that  $\tilde{\rho}: T_y X\times T_y C_2\rightarrow T_y X\times (\nu_C)_y$ is surjective, we let $(v,w)\in T_y X\times (\nu_C)_y$ be any
given element. Then since by assumption the triple intersection point $y$ of $C_1,C_2,C$ is a transversal 
intersection, the projection $\pi: T_y C_2\rightarrow (\nu_C)_y$ is an isomorphism, which implies that there is a unique 
$v_2\in  T_y C_2$ such that $\pi(v_2)=\frac{1}{2}(w-\pi(v))$. With this understood, we let $v_1:=v+v_2$. Then
$\pi(v_1)=\pi(v)+\pi(v_2)=\frac{1}{2}(w+\pi(v))$, from which it follows easily that $\tilde{\rho}(v_1,v_2)=(v,w)$.
This finishes the proof for  (2), and the proof of the lemma is completed.

\end{proof}

\begin{remark}
(1) It is easy to see that Lemma 4.4(2) can be generalized from a pair of curves $C_1,C_2$ to $k$ distinct curves 
$C_1,C_2,\cdots, C_k$: let $\M_J(C)$ be the moduli space of $k$-tuples $(C_1,C_2,\cdots,C_k)$ where the intersection point of $C_1,C_2,\cdots,C_k$ lies in $C$ and the intersection is transversal. Then $\M_J(C)$
is transversely cut-out if the moduli spaces of any $k-1$ of the $k$ curves have positive dimension, and in this case, 
$\dim_\R \M(C)$ is $2(k-1)$ less of the sum of the dimensions of the moduli spaces (i.e., there is a $2(k-1)$ drop in
the total dimension).

(2) Lemma 4.4 suggests that in a pseudoholomorphic deformation argument for a symplectic arrangement, one needs to balance the following two conflicting aspects: on the one hand, if we impose that a $k$-tuple transversal intersection be a ``fixed point" throughout the deformation, the total dimension of moduli spaces will drop by $2k$, while on the other hand, if we allow the $k$-tuple intersection point ``floating" during the deformation,  the total dimension will only drop by $2k-4$, saving $4$ dimensions, but the ``floating" intersection point may become the cause of non-compactness during the deformation.

(3) In the proof of Lemma 4.4(2), if both spaces $\M(d_j,J, {\bf x}_j)$, $j=1,2$, have zero dimension, then the argument breaks down as the map $(ev_1)_\ast: T_{C_1} \M(d_1,J, {\bf x}_1;1)\rightarrow T_y X$ is no longer surjective. 
However, if we allow deformation of $J$, then the map $(ev_1)_\ast: T_{C_1} \M(d_1,J, {\bf x}_1;1)\rightarrow T_y X$ continues to be surjective, and the argument goes through. What this means is that the total moduli space
$\M(C):=\sqcup_{J\in\J} \M_J(C)$ is a Banach manifold, but since $\M_J(C)$ has a virtual dimension $-2$, the 
projection $\overline{{\bf pr}}_\J: \M(C)\rightarrow \J$ is a Fredholm map of index $-2$ between Banach manifolds,
and consequently, the set $\{J\in\J |\M_J(C)\neq\emptyset\}$ is a subspace of co-dimension $2$ in $\J$. More generally, if we fix a combinatorial type of symplectic arrangements of $J$-curves and assume the corresponding moduli space has a negative virtual dimension, then the set of $J$ for which such a symplectic arrangement of $J$-curves exists is a subspace of $\J$ of co-dimension at least $2$; in particular, its complement in $\J$ is path-connected.

\end{remark}

\subsection{Pseudoholomorphic deformations of symplectic arrangements}
With these preparations, we now give a proof of Theorem 1.11.
To simplify the notations, we shall rename the intersection points $\hat{E}_{s}$ in Figure 1 by 
$x_s$ ($s\neq 2$). On the other hand, we denote by $T_{\hat{J}}$ the tangent plane of the degree $2$ 
$\hat{J}$-spheres $\hat{F}_1$, $\hat{F}_2$ at $x_1$ (note that $\hat{F}_1$, $\hat{F}_2$ are tangent at $x_1$), and denote by $L_{\hat{J}}$ the unique degree $1$ 
$\hat{J}$-sphere passing through $x_1$ whose tangent plane is $T_{\hat{J}}$. We continue to 
denote the space of $\omega$-tame almost complex structures by $\J$; in particular, $\hat{J}\in\J$. 

With this understood, we begin the proof with the following definition. 

\begin{definition}
We fix distinct points $x_{0,3},x_{0,4}\neq x_s$, $\forall s$, on the components $\hat{F}_3$, $\hat{F}_4$ of $\hat{D}$ in Figure 1, such that none of $x_{0,3},x_{0,4}$ is contained in the degree $1$ $\hat{J}$-sphere  $L_{\hat{J}}$. With this understood, 
let $\J_0\subset \J$ denote the subset which consists of $J$ satisfying the following condition: there is a subset of $3$ distinct points in $\{x_1,x_7,x_8,x_{0,3},x_{0,4}\}$ which is contained in a degree $1$ $J$-sphere.
\end{definition}

With this understood, we observe 

\begin{lemma} 
One can choose $x_{0,3},x_{0,4}$ such that there is a $J_0\in\J\setminus \J_0$ which is integrable.
\end{lemma}

\begin{proof}
It suffices to show that there is a $J_0\in \J$ which is integrable such that $x_1,x_7,x_8$ do not lie in a
degree $1$ $J_0$-sphere. It is easy to see that one can choose $x_{0,3},x_{0,4}$ such that the rest of the condition  is satisfied so that $J_0\in\J\setminus \J_0$. 

To this end, we pick an integrable $J_0\in \J$ and suppose $x_1,x_7,x_8$ are contained in a degree $1$ 
$J_0$-sphere $L$. Then we shall modify $J_0$ as follows. Pick a diffeomorphism 
$\phi:  \C\P^2\rightarrow  \C\P^2$ which is
supported in a small neighborhood of $x_8$, with $\phi-Id$ being $C^\infty$-small, such that $x_1,x_7$ are still contained in $\phi(L)$ but $x_8$ is not contained in $\phi(L)$. With this understood, note that $\phi^\ast J_0\in
\J$ and is integrable, and $\phi(L)$ is the unique degree $1$ $\phi^\ast J_0$-sphere containing both $x_1,x_7$. 
It follows easily that $x_1,x_7,x_8$ do not lie in a degree $1$ $\phi^\ast J_0$-sphere, so that we can modify $J_0$ by replacing it with $\phi^\ast J_0$. 

\end{proof}

We shall fix the integrable almost complex structure $J_0\in\J\setminus \J_0$. Moreover, for each $J\in\J$,
we fix $J$-invariant plane in $T_{x_1}\C\P^2$, denoted by $T_J$, such that the unique degree $1$ $J$-sphere 
$L_J$, passing through $x_1$ with tangent plane $T_J$, does not contain any of the points in 
$\{x_7,x_8,x_{0,3},x_{0,4}\}$, extending the definition for $\hat{J}$ to the entire space $\J$. It is clear that we can assume $T_J$ depends on $J$ smoothly. 

With the preceding understood, we introduce the following data sets: 
$$
{\bf x}:= \{(x_1,2,T_J), x_7, x_8\}, {\bf x}_{0,3}:=\{x_7,x_{0,3}\}, {\bf x}_{0,4}:=\{x_7, x_{0,4}\}, {\bf y}:=\{x_8\}, 
$$
and consider the moduli spaces $\M(2,J,{\bf x})$, $\M(1,J,{\bf x}_{0,3})$, $\M(1,J, {\bf x}_{0,4})$ and 
$\M(1,J,{\bf y})$. Note that $\M(1,J,{\bf x}_{0,3})$, $\M(1,J, {\bf x}_{0,4})$ are of zero dimension, each containing a unique element which we denote by $C_3(J),C_4(J)$ respectively, and we shall assume $J$ has the property that $C_3(J)\neq C_4(J)$, for instance, $J=\hat{J}$ or $J\in \J\setminus\J_0$. Next let $C_1(J),C_2(J)$
be two distinct elements of $\M(2,J,{\bf x})$, which has dimension $2$, such that each of $C_3(J),C_4(J)$
intersects with $C_1(J),C_2(J)$ transversely at $(x_3)_J,(x_5)_J$ and $(x_4)_J,(x_6)_J$ respectively. Finally,
let $C_5(J),C_6(J)$ be two distinct elements of $\M(1,J,{\bf y})$, which has dimension $2$, such that 
$C_5(J),C_6(J)$ contains  $(x_3)_J,(x_6)_J$ and $(x_4)_J,(x_5)_J$ respectively. See Figure 4. We let 
$\M(J)$ denote the space of such arrangements of $J$-curves $\cup_{k=1}^6 C_k(J)$. It is clear that the symplectic arrangement $\hat{D}\in\M(\hat{J})$, and by Lemma 4.4, $\M(J)$ is transversely cut-out, and is of zero dimension. 

\begin{figure}[h]
   \centering
   \includegraphics[width=0.5\textwidth]{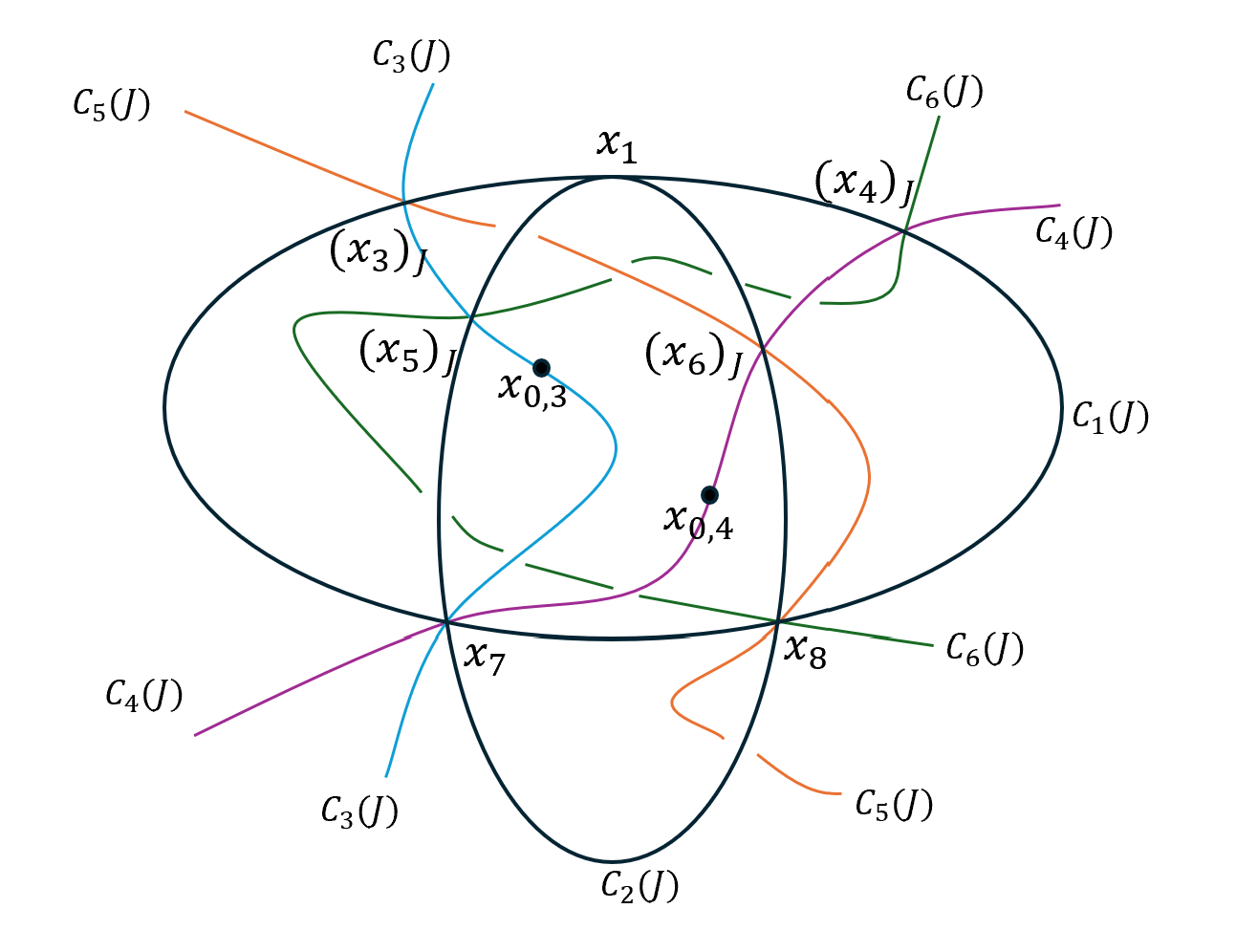}
   \caption*{Figure 4}
\end{figure}

We shall also need to consider certain arrangements of $J$-curves which have a combinatorial type equivalent to the one shown in either Figure 5(1) or Figure 5(2). (We note that in Figure 5(2), $C_3(J), C_1(J)$ and 
$C_4(J), C_2(J)$ are tangent at $x_7$.) 
We denote by $\M_1(J)$, $\M_2(J)$ the space of such arrangements of $J$-curves respectively. By introducing suitable moduli spaces of $J$-spheres and applying 
Lemma 4.4, it is easy to see that $\M_1(J)$, $\M_2(J)$ has a virtual dimension $-2$. With this understood, 
we introduce
$$
\J_i:=\{J\in\J|\M_i(J)\neq \emptyset\}, \mbox{ where } i=1,2.
$$
We note that $\J_0,\J_1,\J_2$ are subspaces of co-dimension $2$; in particular, $\J\setminus (\J_0\cup\J_1\cup\J_2)$ is path-connected (cf. Remark 4.5(3)).

\begin{figure}[h]
   \centering
   \begin{subfigure}[b]{0.45\textwidth}
      \centering
      \includegraphics[height=4.8cm]{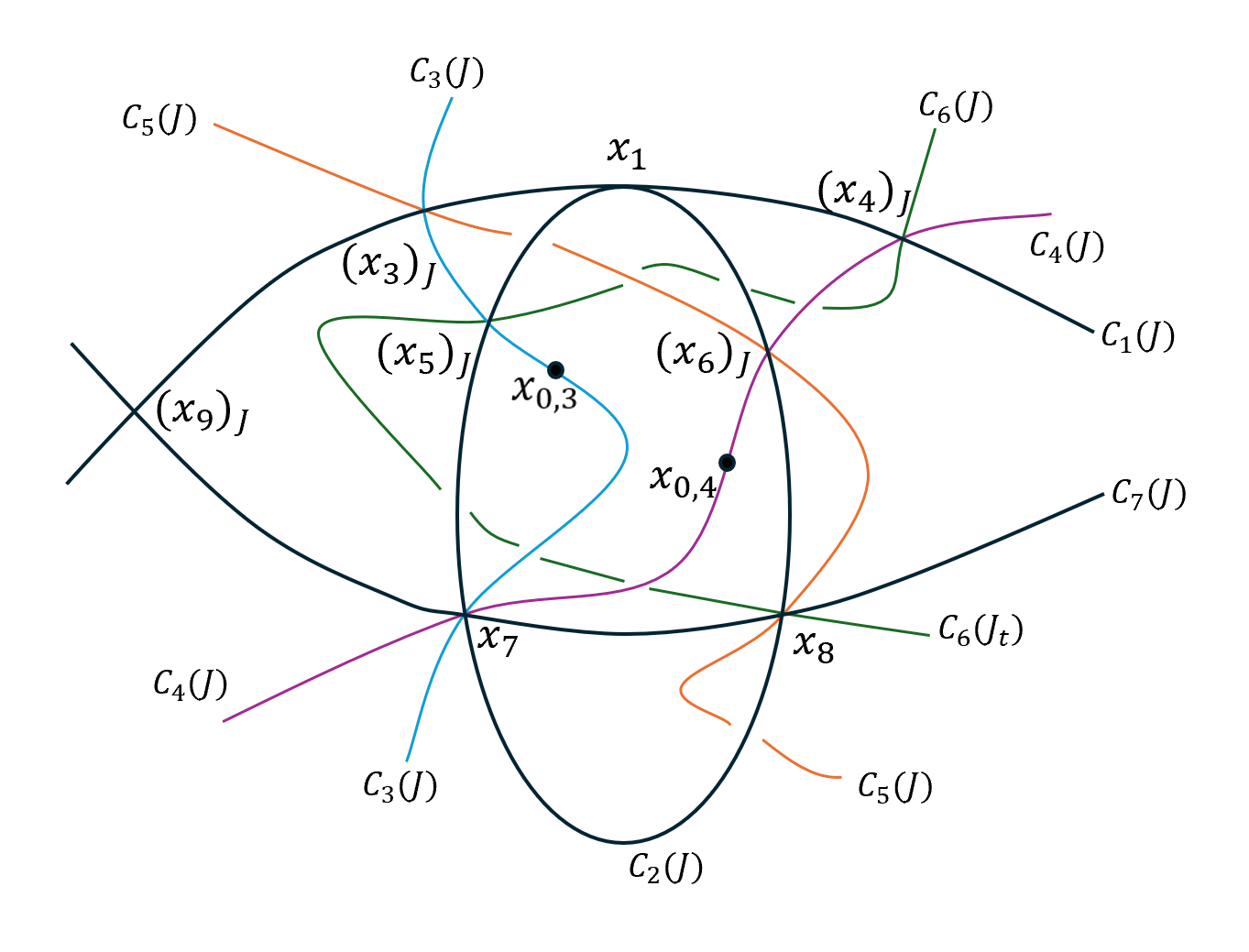}
      \caption*{Figure 5(1)}
   \end{subfigure}
   \hfill
   \begin{subfigure}[b]{0.45\textwidth}
      \centering
      \includegraphics[height=4.8cm]{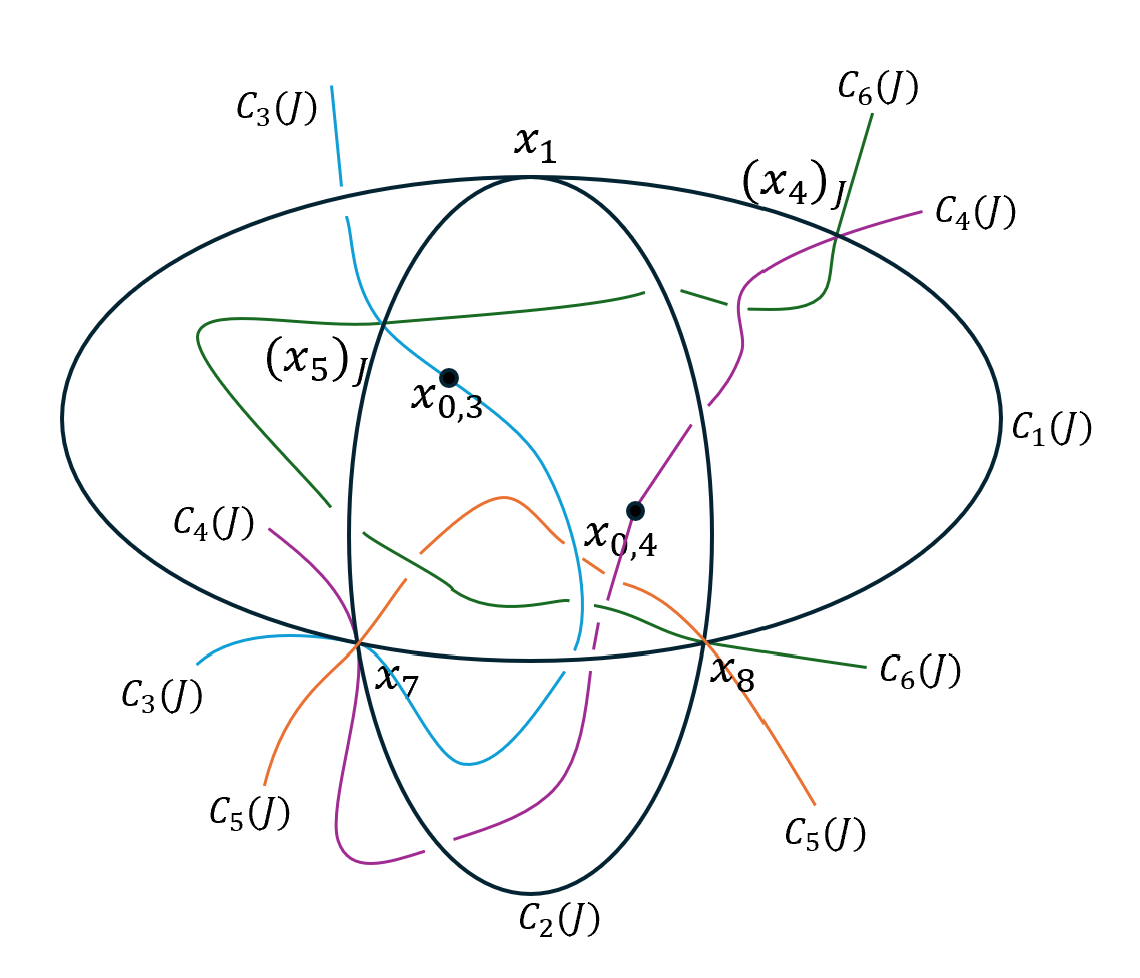}
      \caption*{Figure 5(2)}
   \end{subfigure}
\end{figure}

\vspace{2mm}

With the preceding understood, we fix a smooth path of $\omega$-tame almost complex structures $J_t$,
$t\in [0,1]$, such that $J_t=\hat{J}$ for $t=1$ and $J_t=J_0$ for $t=0$, and for $t\in (0,1)$, $J_t\in \J\setminus (\J_0\cup\J_1\cup\J_2)$. Observe that for $t\in [0,1)$, i.e., $t\neq 1$, $J_t\in\J\setminus \J_0$.

Since $\M(J)$ is transversely cut-out and $0$-dimensional, $\hat{D}\in \M(\hat{J})$ is an isolated point. It follows easily that there is an $\epsilon>0$ such that we can lift the smooth path $J_t$, $t\in (1-\epsilon,1]$, to a smooth path of $J_t$-holomorphic arrangements $\hat{D}_t\in \M(J_t)$, for $t\in (1-\epsilon,1]$, such that 
$\hat{D}_t=\hat{D}$ at $t=1$. We would like to extend the path $\hat{D}_t\in \M(J_t)$ from $t\in (1-\epsilon,1]$ to
the whole interval $[0,1]$, and this is done by a standard continuity argument, by investigating the limit of 
$\hat{D}_t$ as $t$ converges to a $t_0\in [0,1)$.

To simplify the notations, we shall denote the components $C_k(J_t)$ in $\hat{D}_t$ by $C_k(t)$, $k=1,2,\cdots,6$, and the corresponding triple intersection points $(x_s)_{J_t}$ by $(x_s)_t$, $s=3,4,5,6$. As $t\rightarrow t_0$,
we let $(x_s)_{t_0}$ be the limiting point of $(x_s)_t$. There are two cases:

\begin{itemize}
\item [{(1)}] The points $(x_s)_{t_0}$, $s=3,4,5,6$, are distinct from $x_7$.
\item [{(2)}] One of the points $(x_s)_{t_0}$, $s=3,4,5,6$, is $x_7$.
\end{itemize}

In the discussions which follow, it is important to obverse that $J_{t_0}\in \J\setminus \J_0$. Moreover, note that
as $t\rightarrow t_0$, $C_3(t),C_4(t)$ converge to the unique element $C_3(t_0)\in \M(1,J_{t_0}, {\bf x}_{0,3})$, $C_4(t_0)\in \M(1,J_{t_0}, {\bf x}_{0,4})$ respectively, which are distinct because $x_7, x_{0,3}$, and $x_{0,4}$ do not lie in a degree $1$ $J_{t_0}$-sphere (as $J_{t_0}\in \J\setminus \J_0$). With this understood, Theorem 1.11
follows immediately from the following lemma by a standard continuity argument, by the assumption 
that for $t\in (0,1)$, $J_t\in \J\setminus (\J_0\cup\J_1\cup\J_2)$. 

\begin{lemma}
Assume $\lim_{t\rightarrow t_0} \hat{D}_t$ does not lie in $\M(J_{t_0})$. Then 

(1) If we are in case (1), i.e., the points $(x_s)_{t_0}$, $s=3,4,5,6$, are distinct from $x_7$, then 
$\lim_{t\rightarrow t_0} \hat{D}_t\in \M_1(J_{t_0})$.

(2) If we are in case (2), i.e., one of the points $(x_s)_{t_0}$, $s=3,4,5,6$, is $x_7$, 
then $\lim_{t\rightarrow t_0} \hat{D}_t\in \M_2(J_{t_0})$.
\end{lemma}

\begin{proof}
Consider case (1), where the points $(x_s)_{t_0}$, $s=3,4,5,6$, are distinct from $x_7$. Since $C_3(t_0),
C_4(t_0)$ are distinct, it follows easily that $(x_3)_{t_0}, (x_5)_{t_0}\neq (x_4)_{t_0}, (x_6)_{t_0}\in C_4(t_0)$,
as $(x_3)_{t_0}, (x_5)_{t_0}\in C_3(t_0)$. Furthermore, we claim that $(x_3)_{t_0}\neq (x_5)_{t_0}$ and 
$(x_4)_{t_0}\neq (x_6)_{t_0}$. To see this, assume to the contrary that $(x_3)_{t_0}=(x_5)_{t_0}$. Then 
the limit of the degree $1$ $J_t$-sphere $C_6(t)$, which contains $(x_4)_t, (x_5)_t, x_8$, would contain the points 
$(x_4)_{t_0}, (x_3)_{t_0}, x_8$, which are on the limit of the degree $2$ $J_t$-sphere $C_1(t)$. If $(x_4)_{t_0}, (x_3)_{t_0}, x_8$ are distinct, then $C_1(t)$ must converge to a union of  degree $1$ $J_{t_0}$-spheres, one of which contains the $3$ points $(x_4)_{t_0}, (x_3)_{t_0}, x_8$. On the other hand, by the fact that $J_{t_0}\in \J\setminus \J_0$, it is easily seen that the pair of degree $1$ $J_{t_0}$-spheres as the limit of $C_1(t)$ must be 
$L_{J_{t_0}}$, the one passing through $x_1$ with tangent plane $T_{J_{t_0}}$, and the degree $1$ 
$J_{t_0}$-sphere which contains $x_7,x_8$. It is clear that none of them can contain all $3$ points 
$(x_4)_{t_0}, (x_3)_{t_0}, x_8$ because of $J_{t_0}\in \J\setminus \J_0$. This contradiction shows that 
$(x_4)_{t_0}, (x_3)_{t_0}, x_8$ cannot be distinct, hence one of $(x_4)_{t_0}, (x_3)_{t_0}$ must be
$x_8$. But this is also a contradiction because $x_{0,4},x_7,x_8$ or $x_{0,3},x_7,x_8$ are not contained in a degree $1$ $J_{t_0}$-sphere by the assumption that $J_{t_0}\in \J\setminus \J_0$. In summary, $(x_s)_{t_0}$, $s=3,4,5,6$, are distinct points, and none of them is $x_7,x_8$ or $x_1$. By a similar argument, the limit of 
$C_k(t)$ for $k=3,4,5,6$ are distinct as well. 

Next, we observe that if the limit of both $C_1(t),C_2(t)$ under $t\rightarrow t_0$ are of degree $2$, they must remain distinct. This is because if the limits of $C_1(t),C_2(t)$ were to coincide, then the $3$ distinct points 
$(x_3)_{t_0}, (x_5)_{t_0},x_7$ would lie in a degree $2$ $J_{t_0}$-sphere, which would then imply that the
degree $2$ $J_{t_0}$-sphere would intersect the degree $1$ $J_{t_0}$-sphere $C_3(t_0)$ at $3$ distinct points, a
contradiction. On the other hand, since by assumption $\lim_{t\rightarrow t_0} \hat{D}_t$ does not lie in 
$\M(J_{t_0})$, one of $C_1(t),C_2(t)$ must converge to a union of degree $1$ $J_{t_0}$-spheres. Without loss of
generality, we assume $C_1(t)$ converges to a union of degree $1$ $J_{t_0}$-spheres, which, as we have seen earlier, must be $L_{J_{t_0}}$, the degree $1$ $J_{t_0}$-sphere passing through $x_1$ with tangent plane $T_{J_{t_0}}$, and the degree $1$ $J_{t_0}$-sphere which contains $x_7,x_8$. With this understood, note that $C_2(t)$ cannot converge to a union of degree $1$ $J_{t_0}$-spheres, because if otherwise, the limit of $C_2(t)$ would be the same pair of degree $1$ $J_{t_0}$-spheres, and as a result, $(x_5)_{t_0}$ would be lying in the degree $1$ $J_{t_0}$-sphere $L_{J_{t_0}}$, and the degree $1$ $J_{t_0}$-sphere $C_3(t_0)$ would intersect
 $L_{J_{t_0}}$ at $2$ distinct points $(x_3)_{t_0}, (x_5)_{t_0}$, which is a contradiction. Now it is clear that 
 $\lim_{t\rightarrow t_0} \hat{D}_t\in \M_1(J_{t_0})$.
 
 It remains to consider case (2), where one of the points $(x_s)_{t_0}$, $s=3,4,5,6$, is $x_7$. Without loss of
 generality, assume $(x_3)_{t_0}=x_7$. Then note that the limit of $C_5(t)$, which contains $(x_3)_t,(x_6)_t, x_8$,
 must be the degree $1$ $J_{t_0}$-sphere which passes through $x_7,x_8$. Furthermore, with 
 $J_{t_0}\in \J\setminus \J_0$, we must have $(x_6)_{t_0}=x_7$ as well. With this understood, we claim that
 under $t\rightarrow t_0$, the limit of both $C_1(t),C_2(t)$ must remain to be a degree $2$ $J_{t_0}$-sphere. 
 To see this, assume to the contrary that $C_1(t)$ converges to a union of degree $1$ $J_{t_0}$-spheres, which must be $L_{J_{t_0}}$ and $C_5(t_0)$. (Here $C_5(t_0)$ denotes the limit of $C_5(t)$ which is the degree $1$ $J_{t_0}$-sphere passing through $x_7,x_8$.) Then by Gromov compactness, there is a simple closed loop on $C_1(t)$ which is pinched to a point under the convergence, and on the complement of any fixed open regular neighborhood of the loop, the convergence of $C_1(t)$ is in $C^\infty$ after a suitable parametrization of the pseudoholomorphic maps. In particular, the disc on $C_1(t)$ which contains the point $(x_3)_t$ will converge in
$C^\infty$ as an embedding to a disc on $C_5(t_0)$. In the same way, the convergence of $C_3(t)$ to 
$C_3(t_0)$ is also in $C^\infty$ after a suitable parametrization of the pseudoholomorphic maps. With this understood, the fact that the $2$ distinct points $(x_3)_t, x_7$ both lie on $C_1(t)$ and $C_3(t)$ and $(x_3)_t$
converges to $x_7$ implies that the limits of $C_1(t)$ and $C_3(t)$ should have
a tangent plane at $x_7$ with the same slope (as complex lines in $(T_{x_7} \C\P^2, J_{t_0})$). But this 
contradicts the fact that $C_5(t_0)$ and $C_3(t_0)$ are intersecting transversally at $x_7$. Hence the limit of 
both $C_1(t),C_2(t)$ remains to be a degree $2$ $J_{t_0}$-sphere, to be denoted by $C_1(t_0),C_2(t_0)$ respectively. With this understood, note that the same argument shows that $C_3(t_0),C_1(t_0)$ and $C_4(t_0),C_2(t_0)$ are tangent at $x_7$. Since  $C_3(t_0),C_4(t_0)$ are distinct, so must be $C_1(t_0),C_2(t_0)$. Finally, $C_3(t_0)$ (resp. $C_4(t_0)$) intersects with $C_2(t_0)$ (resp. $C_1(t_0)$) transversely at $x_7$, so $(x_5)_{t_0}$ (resp. $(x_4)_{t_0}$) must be the other transverse intersection point, distinct from $x_8,x_1$. It follows easily that $\lim_{t\rightarrow t_0} \hat{D}_t\in \M_2(J_{t_0})$, and the proof of Lemma 4.8 is completed.

\end{proof}

\vspace{2mm}

{\Small University of Massachusetts, Amherst.\\
{\it E-mail:} wch@umass.edu

\end{document}